   \newcommand {\eop}{\hfill $\square$}
\newcommand{\ria}{\rightarrow}
\newcommand{\Qbinary}{\mathbb{Q}_2}
\newcommand{\Halt}{{\ES'}}
\newcommand{\HH}{\mathcal{K}}
\newcommand{\UM}{\mathbb{U}}
\newcommand{\exo}[1]{\exists #1 \, }
\newcommand{\fao}[1]{\forall #1 \, }
\newcommand{\lwtt}{\le_{\mathrm{wtt}}}
\newcommand{\ltt}{\le_{\mathrm{tt}}}
\newcommand{\verif}{\n {\it Verification.\ }}
 \newcommand{\vsps}{\vspace{3pt}}
\newcommand{\mmbox}[1]{\ \mbox{#1}\ }
\newcommand{\ttext}[1]{\ \text{#1}\ }
\newcommand{\SI}[1]{\Sigma^0_{#1}}
\newcommand{\PI}[1]{\Pi^0_{#1}}
\newcommand{\PPI}{\PI{1}}
\newcommand{\PP}{\mathcal P}
\renewcommand{\P}{\mathcal P}
\newcommand{\Q}{\mathcal Q}
\newcommand{\QI}{\Pi^1_1}
\newcommand{\Kuc}{Ku{\v c}era}
\newcommand{\ML}{Martin-L{\"o}f}
\newcommand{\rapf}{$\RA:$\ }
\newcommand{\lapf}{$\LA:$\ }
\newcommand{\strcantor}{2^{<\omega}}
\newcommand{\seqcantor}{2^{ \omega}}
\newcommand{\cantor}{\seqcantor}
\newcommand{\init}{\mbox{\rm \textsf{init}}}
\newcommand{\NN}{{\mathbb{N}}}  
\newcommand{\QQ}{{\mathbb{Q}}}
\newcommand{\DII}{\Delta^0_2}
\newcommand{\lep}{\le^+}
\newcommand{\ex}{\exists}
\newcommand{\fa}{\forall}
\newcommand{\LLR}{\ \Leftrightarrow \ }
\newcommand{\RA}{\Rightarrow}
\newcommand{\RRA}{\ \Rightarrow\ }
\newcommand{\LA}{\Leftarrow}
\newcommand{\DA}{\downarrow}
\newcommand{\UA}{\uparrow}
\newcommand{\n}{\noindent}
\newcommand{\wt}{\widetilde}
\newcommand{\sub}{\subseteq}
\newcommand{\ul}{\underline}
\renewcommand{\land}{\&}
\newcommand{\ES}{\emptyset}
\renewcommand{\hat}{\widehat}
\newcommand{\dom}{\mbox{dom}}
\newcommand{\lland}{\ \land \ }
\newcommand{\itone}{\item[(i)]}
\newcommand{\ittwo}{\item[(ii)]}
\newcommand{\itthree}{\item[(iii)]}
\newcommand{\la}{\langle}
\newcommand{\ra}{\rangle}
\newcommand{\bi}{\begin{itemize}}
\newcommand{\ei}{\end{itemize}}
\newcommand{\bc}{\begin{center}}
\newcommand{\ec}{\end{center}}
\newcommand{\sss}{\sigma}
\newcommand{\aaa}{\alpha}
\newcommand{\sssl}{|\sigma|}
\newcommand{\aaal}{|\alpha|}
\newcommand{\leT}{\le_T}
\newcommand{\geT}{\ge_T}
\newcommand{\leb}{\mathbf{\lambda}}
\newcommand{\Om}{\Omega}
\newcommand{\twoset}{\{0,1\}}
\newcommand{\PF}{prefix-free }
\newcommand{\sN}[1]{_{#1\in \NN}}
\newcommand{\seq}[1]{{\langle{#1}\rangle}} 
\newcommand{\estring}{\emptyset}
\newcommand{\Opcl}[1]{[#1]^\prec}
\newcommand{\tp}[1]{2^{#1}}
\newcommand\+[1]{\mathcal{#1}}
\newcommand{\uhr}[1]{\!  \upharpoonright_{#1}}
\newcommand{\Cyl}[1]{\ensuremath{[ \! [ {#1} ] \! ]}}
\newtheorem{theorem}{Theorem}[section]
\newtheorem{thm}[theorem]{Theorem}
\newtheorem{definability lemma}[theorem]{Definability Lemma}
\newtheorem{fact}[theorem]{Fact}
\newtheorem{example}[theorem]{Example}
\newtheorem{proposition}[theorem]{Proposition}
\newtheorem{prop}[theorem]{Proposition}
\newtheorem{claim}[theorem]{Claim}
\newtheorem{definition}[theorem]{Definition}
\newtheorem{deff}[theorem]{Definition}
\newtheorem{convention}[theorem]{Convention}
\newtheorem{cor}[theorem]{Corollary}
\newtheorem{remark}[theorem]{Remark}
\renewcommand{\SS}{\mathcal{S}}
\newcommand{\cc}{\mathbf{c}}
\newcommand{\dd}{ \mathbf d}
\renewcommand{\k}{\ensuremath{\text{\bfseries\itshape k}\, }}
\newcommand{\vsp}{\vspace{6pt}}
\newcommand{\cost}{\cc_{\HH}}
\newcommand{\ulcost}{\ul \cc_{\HH}}
\begin{document}
\title{Calculus of Cost Functions}
\author{ 
Andr\'e Nies}

\keywords{computability, randomness, lowness, cost functions}

\subjclass{Primary: 03F60; Secondary: 03D30}

\thanks{
Research partially supported by the Marsden Fund of New
Zealand, grant no.\ 08-UOA-187,  and by the Hausdorff Institute of Mathematics, Bonn.}

\begin{abstract} Cost functions provide  a framework for  constructions of   sets Turing below the halting problem that are close to computable.  We carry out  a systematic study of cost functions. We  relate their algebraic properties to their expressive strength. We show that the  class of additive cost functions   describes  the $K$-trivial sets. We prove a cost function basis theorem, and give a general construction for   building   computably enumerable sets that are  close to being Turing complete.
\end{abstract}
\maketitle
\tableofcontents
\section{Introduction}  
 In the time period from 1986 to 2003, several  constructions of  computably enumerable (c.e.) sets    appeared. They turned out to be closely related.
\bi \item[(a)] Given a  \ML\ random (ML-random for short)  $\DII$ set $Y$,

\n  \Kuc~\cite{Kucera:86}  built a c.e.\ incomputable set  $A\leT Y$.  His construction is interesting because in the  case that  $Y <_T \Halt$, it provides a   c.e.\ set~$A$ such that $\ES <_T A <_T \Halt$,  without using injury to requirements as in the traditional proofs. ($\Halt$ denotes the halting problem.)

\item[(b)] \Kuc\ and Terwijn~\cite{Kucera.Terwijn:99} built a c.e.\ incomputable set  $A$ that is low for ML-randomness: every ML-random set  is already ML-random relative to $A$. 

\item[(c)] $A$ is called $K$-trivial if  $K(A\uhr n) \le K(n)+ O(1)$, where $K$ denotes prefix-free descriptive string complexity. This means that the initial segment complexity of $A$ grows as slowly as that of a computable set. Downey et al.\ \cite{Downey.Hirschfeldt.ea:03} gave a very short  construction  (almost a ``definition'')   of a  c.e., but incomputable $K$-trivial  set.  
\ei

 The sets in (a) and (b) enjoy  a so-called lowness property, which says that the set is  very   close to computable.   Such properties can be classified according to various paradigms introduced in \cite{Nies:ICM, Greenberg.Hirschfeldt.ea:12}.   The set in (a) obeys  the  \emph{Turing-below-many}  paradigm which  says that $A$ is close to being  computable because it  is easy for an oracle set to compute it.  
A frequent alternative is  the  \emph{weak-as-an-oracle}  paradigm:  $A$ is  weak in a specific sense when used as an oracle set in a Turing machine computation.  An example is the oracle set in (b), which is so weak that it useless as an extra computational device when testing for ML-randomness.  On the other hand, $K$-triviality in (c) is a property stating that the set is far from random:  by the Schnorr-Levin Theorem, for a random set  $Z$ the initial segment complexity grows fast in that $K(Z \uhr n) \ge n - O(1)$.  For   background on the   properties     in (a)-(c) see   \cite{Downey.Hirschfeldt:book} and \cite[Ch.\ 5]{Nies:book}.\footnote{We note that the result (c) has a complicated history. Solovay \cite{Solovay:75} built a $\DII$  incomputable set  $A$ that is $K$-trivial. Constructing a  c.e.\ example of such a set  was attempted  in various sources such as \cite{Calude.Grozea:96}, and unpublished work of Kummer.}

A central point for  starting  our investigations is  the fact  that the   constructions in (a)--(c)  look very    similar. In hindsight this is not surprising: the classes of sets  implicit  in (a)-(c) coincide!  Let us discuss why. 

\vsps 

\n {\it (b) coincides with  (c):}  Nies \cite{Nies:AM}, with some assistance by Hirschfeldt,  showed that  lowness for ML-randomness is the same as   $K$-triviality.  For this he introduced a method now known as the ``golden run''. 

\vsps
\n {\it (a) coincides with  (b):}    The construction in  (a) is only interesting if $Y \not \ge_T \Halt$.  Hirschfeldt, Nies and Stephan~\cite{Hirschfeldt.Nies.ea:07} proved that if $A$ is a  c.e.\ set   such that $A\leT Y$ for some ML-random set  $Y \not \ge_T \Halt$, then $A$ is $K$-trivial, confirming the intuition   sets of the type built by \Kuc\  are close to computable.  They   asked whether, conversely, for every $K$-trivial set $A$ there is a ML-random set  $Y\ge_T A$ with $Y \not \ge_T \Halt$. This question became known as the  ML-covering problem. Recently the  question was solved in the affirmative by combining the work of  seven authors in two separate papers. In fact,   there is a single ML-random $\DII$ set  $Y \not \ge_T \Halt$ that is Turing above all the $K$-trivials. A summary is given in~\cite{Bienvenu.Day.ea:14}. 

\vsps

  The common idea for these  constructions  is to ensure lowness of $A$ dynamically, by restricting the overall manner in which  numbers can be enumerated into $A$. This third lowness paradigm has been called  \emph{inertness}  in \cite{Nies:ICM}: a set   $A$ is close to computable because  it is  computably 
approximable with a small number  of    changes.  

The idea is implemented as follows. The enumeration of a number $x$ into $A$ at a stage~$s$ bears a cost $\cc(x,s)$, a non-negative rational that can be computed from $x$ and~$s$. We have to enumerate $A$ in such a way that the sum of all costs is finite.  A construction of this type will be called a  {\em cost function construction}. 

 If we enumerate at a stage more than one number into $A$, only the cost for enumerating the least number is  charged. So, we can  reduce  cost by enumerating $A$ in ``chunks''.

\subsection{Background on cost functions} The general theory of cost functions  began in~\cite[Section 5.3]{Nies:book}. It was further developed in  \cite{Greenberg.Nies:11,Greenberg.Hirschfeldt.ea:12,Diamondstone.Greenberg.ea:nd}. We use the language of~\cite[Section 5.3]{Nies:book} which  already  allows for  the  constructions of $\DII$ sets. The language is  enriched by some   notation from~\cite{Diamondstone.Greenberg.ea:nd}. We will see that most examples of cost functions are based on randomness-related concepts.

\begin{deff} \label{def:c.f.}   {\rm    A \emph{cost function} is a computable function \bc $\cc:
\NN \times \NN \ria \{x \in \QQ \colon \,  x \ge 0\}$. \ec } 
\end{deff}
\n Recall  that a  \emph{computable approximation}  is a computable sequence of finite sets $\seq{ A_s}_{s\in \NN}$ such that $\lim_s A_s(x)$ exists for each $x$. 
\begin{deff} \label{df:obey cf}  
{\rm 
\n (i). Given  a  computable approximation $\seq{ A_s}_{s\in \NN}$
 and a  cost function~$\cc$, for $s>0$  we let \bc $\cc_s(A_s) = \cc(x,s) $ where $ x< s  \lland x \mmbox{\rm is
least s.t.}   A_{s-1} ( x) \neq  A_{s} (x) $; \ec 
if there is no such $x$ we let $\cc_s(A_s) =0$. This is the cost of  changing $A_{s-1}$ to~$A_s$.
  We let 
\begin{equation*}   \cc \seq{A_s}\sN s = \sum_{s>0} \cc_s(A_s) \end{equation*}
 be the total cost of all the $A$-changes. We will often   write  $\cc\seq {A_s}$ as a shorthand for $\cc \seq{A_s}\sN s$.
 
\vsps

\n (ii) We say  that  $\seq{A_s}_{s\in \NN}$ \emph{obeys} $\cc$   if $\cc \seq{A_s}$ is finite. We denote this by
\bc    $\seq{A_s} \models \cc$. \ec

\vsps
\n (iii) We  say that a $\DII$   set~$A$ \emph{obeys} $\cc$, and write  $A \models \cc$,  if some  computable approximation of $A$ obeys $\cc$.    }
\end{deff}

A cost function $\cc$ acts like a global restraint, which  is successful if  the condition $ \cc  {\seq{A_s}}   < \infty$  holds.  \Kuc's  construction mentioned in  (a) above needs to be recast in order  to be viewed as a cost-function construction \cite{Greenberg.Nies:11,Nies:book}. In contrast,   (b)  and (c)   can   be directly seen as cost function constructions. 
In  each of (a)--(c) above, one   defines a cost function $\cc$ such that any set $A$ obeying $\cc$ has the lowness property in question.  For, if $A \models \cc$, then  one can enumerate an auxiliary object that  has in some sense a bounded  weight. 

In (a), this object is a Solovay test that  accumulates the errors in an attempted computation of~$A$ with oracle $Y$. Since  $Y$ passes this test,  $Y$  computes $A$. 

In (b), one is given a $\Sigma^0_1(A)$ class $\+ V \sub \cantor$ such that  the  uniform measure  $\leb \+ V$ is less than~$1$, and the complement of $\+ V$ consists only of ML-randoms. Using that $A$ obeys $\cc$,  one  builds  a $\Sigma^0_1$ class $\+ S \sub \cantor$ containing $\+ V$ such that still $\leb \+ S <1$. This implies that $A$ is low for ML-randomness. 

In (c) one builds   a bounded request set (i.e., Kraft-Chaitin set) which shows that $A$ is $K$-trivial. 

The cost function in (b) is adaptive in the sense that $\cc(x,s) $ depends on $A_{s-1}$. In contrast,  the   cost functions in (a) and (c)  can be defined  in advance, independently  of the computable approximation of the set  $A$ that is   built.

The main existence theorem, which we recall as Theorem~\ref{thm:cfconstr} below, states  that for any cost function $\cc$ with the limit condition $\lim_x \liminf_s \cc(x,s) = 0$, there is an incomputable c.e.\ set $A$ obeying $\cc$. The   cost functions in (a)-(c) all have  the limit condition. Thus, by the existence theorem,  there is an incomputable c.e.\  set $A$ with  the required lowness property.

Besides providing a unifying picture of these constructions, cost functions have many other applications. We discuss some of them.

Weak 2-randomness is a notion stronger than ML-randomness: a set $Z$ is weakly 2-random if $Z$ is in no $\PI 2$ null class. In 2006,  Hirschfeldt and Miller   gave a characterization of this notion: a ML-random is weakly 2-random if and only if it forms a minimal  pair with $\Halt$. The implication from left to right is straightforward. The converse direction relies on a cost function related  to the one for \Kuc's result (a) above.  (For detail see e.g.\ \cite[Thm.\ 5.3.6]{Nies:book}.) Their result  can be seen  as an instance of    the randomness enhancement principle~\cite{Nies:ICM}: the ML-random sets get more random as they lose computational complexity.

The author \cite{Nies:AM} proved  that  the  single cost function  $\cost$ introduced in  \cite{Downey.Hirschfeldt.ea:03}    (see Subsection~\ref{ss:standard_costfunction} below) characterises the $K$-trivials. As a corollary, he showed that every $K$-trivial set $A$ is truth-table below  a c.e.\ $K$-trivial $D$. The proof of this corollary uses the  general framework  of change sets spelled out in Proposition~\ref{prop:c.e. ub cf} below. While this  is still  the only known proof yielding $A\ltt D$,  Bienvenu  et al.\
 \cite{Bienvenu.Downey.ea:15} have recently given an alternative proof using Solovay functions in order to  obtain  the weaker reduction  $A \leT D$.

 In model theory, one asks whether a class of structures can be described by  a first order theory. Analogously, we ask whether an   ideal of the Turing degrees below $\mathbf 0'$ is given by obedience to all cost functions of  an appropriate type. For instance, the $K$-trivials are axiomatized by $\cost$. 

 Call a cost function $\cc$ {\em benign} if from $n$ one can compute  a bound on the number of disjoint intervals $[x,s)$ such that  $\cc(x,s) \ge \tp{-n}$. Figueira et al.~\cite{Figueira.ea:08} introduced  the property of being strongly jump traceable (s.j.t.), which    is an extreme lowness property of an oracle $A$, even stronger than being low for  $K$. Roughly speaking, $A$ is s.j.t.\ if the jump $J^A(x)$ is in $T_x$  whenever it is defined, where $\seq{T_x}$ is a uniformly c.e.\ sequence of sets such that  any given order function  bounds  the  size of  almost all the $T_x$.    Greenberg and Nies~\cite{Greenberg.Nies:11} showed that the  class of benign cost functions axiomatizes the c.e.\ strongly jump traceable  sets.

Greenberg et al.\ \cite{Greenberg.Hirschfeldt.ea:12} used cost functions to show that each strongly jump-traceable  c.e.\ set is Turing below each $\omega$-c.e. ML-random set. As a main result, they also obtained  the converse. In fact they showed that any set that is below each superlow ML-random set is s.j.t.  

 The question remained  whether  a  general s.j.t.\ set  is Turing below each $\omega$-c.e.\  ML-random set.  Diamondstone et al.\ \cite{Diamondstone.Greenberg.ea:nd} showed that each s.j.t.\ set $A$ is Turing below a c.e., s.j.t.\ set $D$. To do so,  as  a main technical result they provided  a benign cost function  $\cc$ such that each set $A$ obeying $\cc$ is Turing below a c.e.\ set $D$ which obeys every cost function that $A$ obeys. In particular, if $A$ is s.j.t., then $A \models \cc$, so the c.e.\ cover $D$ exists and is also s.j.t.\ by the above-mentioned result of Greenberg and Nies~\cite{Greenberg.Nies:11}. This gives an affirmative answer to the question.
Note that this answer  is analogous  to the     result~\cite{Bienvenu.Day.ea:14} that every $K$-trivial is below an incomplete random.

\subsection{Overview of our results}
The main purpose of the paper is a systematic  study  of cost functions and the  sets obeying them. We are guided by the above-mentioned  analogy from first-order model theory: cost functions are like sentences, sets are like models, and obedience is like  satisfaction.  So far this analogy has been developed only for cost functions that are monotonic (that is, non-increasing in the first component, non-decreasing in the stage component). In Section~\ref{s:look_ahead} we show that the conjunction of two  monotonic cost functions is given by their sum, and implication $\cc \to \dd$ is equivalent to  $\ul \dd = O(\ul \cc)$ where $\ul \cc(x) = \sup_s \cc(x,s)$ is the limit function.

In Section~\ref{s:additive cf} we show that  a natural  class of cost functions introduced  in Nies~\cite{Nies:ICM}       characterizes the $K$-trivial  sets: a cost function $\cc$ is additive if $\cc(x,y)+ \cc(y,z) = \cc(x,z)$ for all $x< y< z$. We show that such a cost function is  given by  an enumeration of a left-c.e.\ real, and that implication corresponds to Solovay reducibility on left-c.e.\ reals.  Additive cost functions have been used prominently in the solution of the ML-covering problem~\cite{Bienvenu.Day.ea:14}. The fact that a given $K$-trivial $A$ obeys every additive cost function is used to show that $A \leT Y$ for the Turing incomplete ML-random set constructed by Day and Miller~\cite{Day.Miller:14}.

Section~\ref{s:random_low_K-triv} contains some more applications of cost functions to the study of computational lowness and $K$-triviality. For instance, strengthening the result in \cite{Greenberg.Hirschfeldt.ea:12} mentioned above, we show that each c.e., s.j.t.\ set is below any complex $\omega$-c.e.\  set $Y$,  namely, a set $Y$ such that  there is an order function $g$ with $g(n ) \lep K(Y \uhr n)$ for each $n$. In addition,  the use of the reduction is bounded by the identity. Thus, the full  ML-randomness assumed in \cite{Greenberg.Hirschfeldt.ea:12} was too strong a hypothesis. We also discuss the relationship of cost functions and a weakening of $K$-triviality.  

In the remaining part of the paper   we obtain two    existence theorems.  
  Section~\ref{s:costf_basis_theorem} 
 shows that  given   an arbitrary  monotonic  cost function $\cc$, any  nonempty $\PPI$ class contains a   $\DII$ set  $Y$ that is so low that   each c.e.\ set $A \leT Y$  obeys~$\cc$.   In Section~\ref{ss:dual} we relativize a cost function $\cc$ to an oracle  set $Z$, and show that there is a c.e.\ set $D$ such that $\Halt$ obeys $\cc^D$ relative to $D$. This much harder ``dual'' cost function construction can be used to build incomplete c.e.\ sets that are very close to computing $\Halt$. For instance, if $\cc$ is the cost function $\cost$ for $K$-triviality, then $D$ is LR-complete.


\section{Basics} 
We provide formal background, basic facts and examples relating to the discussion above. We introduce classes of cost functions: monotonic, and proper cost functions. We  formally define the limit condition, and give a proof of the existence theorem. 
\subsection{Some easy facts on cost functions} 

\mbox{}

\begin{deff} \label{def:c.f.-2}    
 {\rm  We say that a cost function~$\cc$ 
   is \emph{nonincreasing  in the main argument}    if   \bc $\fa x, s \, [  \cc(x+1, s) \le  \cc(x, s)]$.   \ec  We say that~$\cc$ 
   is \emph{nondecreasing in the stage}     if  $\cc(x, s) = 0 $ for $x >  s$ and   \bc $\fa x, s \, [  \cc(x, s) \le  \cc(x, s+1)] $. \ec 
     If $\cc$ has both properties we say that $\cc$ is \emph{monotonic}.  This means that the cost $ \cc(x,s)$ does not decrease when we enlarge the interval $[x,s]$.    }
\end{deff}

\begin{fact} \label{easy1} Suppose $A\models \cc$. Then for each $\epsilon >0$ there is a computable approximation $\seq{A_s}\sN s$ of $A$ such that $\cc  {\seq{A_s}\sN s}  < \epsilon$. \eop  \end{fact}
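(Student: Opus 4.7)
The plan is to start from the hypothesis that $A\models \cc$, which by Definition~\ref{df:obey cf}(iii) gives some computable approximation $\seq{B_s}\sN s$ of $A$ with $\cc\seq{B_s} = \sum_{s>0} \cc_s(B_s) < \infty$. Since this series converges, for the given $\epsilon > 0$ I can effectively pick a stage $N$ such that
\[
\sum_{s > N} \cc_s(B_s) < \epsilon.
\]
The intuition is simply that the cost of the tail of any convergent approximation can be made arbitrarily small, so we just ``hide'' the initial costly changes by delaying them.

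Next I would define a new computable approximation $\seq{A_s}\sN s$ of $A$ by setting $A_s := B_N$ for all $s \le N$ and $A_s := B_s$ for all $s > N$. Note $B_N$ is a finite set that is computable from $N$, so $\seq{A_s}$ is indeed a computable sequence of finite sets. Moreover $\lim_s A_s(x) = \lim_s B_s(x) = A(x)$ for every $x$, so $\seq{A_s}$ is a computable approximation of $A$.

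Finally I would bound $\cc\seq{A_s}$. For $s \le N$ there are no changes, so $\cc_s(A_s)=0$. For $s = N+1$ we have $A_N = B_N$ and $A_{N+1} = B_{N+1}$, so the least changing position and hence $\cc_{N+1}(A_{N+1})$ coincides with $\cc_{N+1}(B_{N+1})$. For $s > N+1$ we have $A_{s-1} = B_{s-1}$ and $A_s = B_s$, so again $\cc_s(A_s) = \cc_s(B_s)$. Therefore
\[
\cc\seq{A_s} \;=\; \sum_{s > N} \cc_s(B_s) \;<\; \epsilon,
\]
as required.

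There is no real obstacle here; the only point worth checking is that one is allowed to ``pause'' the approximation on its initial segment without spoiling either computability or convergence, and that doing so does not hide costs charged at the transition stage $N+1$. Both issues are handled by the explicit choice $A_s = B_N$ for $s \le N$, which makes the change at stage $N{+}1$ charge exactly $\cc_{N+1}(B_{N+1})$ and not more.
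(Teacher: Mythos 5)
Your proof is correct, and it takes a different route from the paper's. The paper's proof freezes the \emph{position} axis: it takes the given witnessing approximation $\seq{\widehat A_s}$ and replaces its values at positions $x \le x_0$ with the limit value $A(x)$, so that only changes at positions beyond $x_0$ remain visible, and then lets $x_0 \to \infty$. Your proof instead freezes the \emph{stage} axis: the first $N$ stages are set to the constant $B_N$, and from stage $N+1$ on the approximation copies $B_s$ verbatim. Your version has the virtue that the per-stage cost of the new approximation is \emph{literally} equal to $\cc_s(B_s)$ for $s > N$ and $0$ for $s \le N$, so the bound drops out with no hypothesis on $\cc$ at all. The paper's position-freezing, in contrast, can push the least changed position at a given stage from some $y \le x_0$ up to some $y' > x_0$, and one needs $\cc$ to be non-increasing in the first argument to guarantee $\cc(y',s) \le \cc(y,s)$; without that monotonicity the position-freezing trick can actually \emph{increase} the total cost. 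So your argument is both simpler and more general. One cosmetic overstatement: you cannot in general ``effectively'' pick $N$ with $\sum_{s>N}\cc_s(B_s) < \epsilon$, since the limit of the convergent series need not be a computable real; but the statement only needs that some such $N$ exists, and once $N$ is fixed (nonuniformly) the resulting sequence $\seq{A_s}$ is computable, which is all that is required.
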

\begin{proof} Suppose $\seq{\widehat A_s}\sN s \models \cc$. Given    $x_0$  consider the modified computable approximation $\seq{ A_s}\sN s$ of $A$   that  always outputs the final value $A(x)$  for  each   $x\le x_0$. That is, $A_s(x) =A(x)$ for $x \le x_0$, and  $A_s(x) =\widehat A_s(x)$ for $x > x_0$.
 	 Choosing $x_0$ sufficiently large,   we can ensure $\cc \seq A_s < \epsilon$.   
\end{proof}

\begin{definition} \label{def:proper cf} {\rm Suppose that  a  cost function $ \cc(x,t)$ is non-increasing in  the main argument~$x$. We say that  $\cc$ is  {\it proper}  if    $\fao x \ex t \,   \cc(x,t) > 0$.} \end{definition}
 If   a cost function that is non-increasing in  the main argument  is not proper, then  every  $\DII$ set obeys~$\cc$. Usually we will  henceforth assume that   a cost function $\cc$ is proper. Here is an example how being proper helps. 
\begin{fact} Suppose that $\cc$ is a proper cost function   and  $S= \cc {\seq{A_s} } < \infty  $ is     a  computable  real. Then $A$ is computable. \end{fact}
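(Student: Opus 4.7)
The plan is to compute $A(x)$ by locating, uniformly in $x$, a stage $t_x$ past which the approximation $\seq{A_s}$ cannot afford any further change at a position $\le x$. The two ingredients are (a) properness, which gives a strictly positive ``price'' for changing any column $\le x$, and (b) computability of $S$, which lets us effectively control the tail of the cost series.

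First I would use properness to compute, from $x$, a stage $s_x$ and a positive rational $q_x$ with $\cc(x,s_x) \ge q_x$: search for the first $s$ with $\cc(x,s) > 0$ and round down to a positive dyadic rational below that value. Next, since the partial sums $\sigma_n = \sum_{s \le n} \cc_s(A_s)$ form a computable rational sequence monotonically approaching the computable real~$S$, I can effectively find a stage $t_x \ge s_x$ with $S - \sigma_{t_x} < q_x$, that is, $\sum_{s > t_x} \cc_s(A_s) < q_x$. My claim is then that $A(x) = A_{t_x}(x)$; since the map $x \mapsto t_x$ is computable, this makes $A$ computable.

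To verify the claim, suppose for contradiction that $A(x) \neq A_{t_x}(x)$. Then some stage $t > t_x$ satisfies $A_{t-1}(x) \neq A_t(x)$, so the least $y<t$ with $A_{t-1}(y) \neq A_t(y)$ is at most~$x$. Using that $\cc$ is non-increasing in the main argument, $\cc_t(A_t) = \cc(y,t) \ge \cc(x,t)$. Combined with $\cc$ being non-decreasing in the stage (the other half of the monotonicity that is standard throughout this section), one gets $\cc(x,t) \ge \cc(x,s_x) \ge q_x$, so the single term $\cc_t(A_t) \ge q_x$ already matches the entire tail budget, contradicting $\sum_{s > t_x} \cc_s(A_s) < q_x$.

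The main obstacle I anticipate is the gap between the \emph{pointwise} positivity granted by properness at the single stage $s_x$ and the \emph{uniform} lower bound $\cc(x,t) \ge q_x$ at all later stages $t$ that the contradiction needs. Bridging this gap is exactly what non-decreasingness in the stage does, so I expect the statement to be used in the context where $\cc$ is monotonic; otherwise one would first have to replace $\cc$ with its running supremum $\tilde\cc(x,t) = \max_{r\le t}\cc(x,r)$ and separately verify that $A$ still obeys $\tilde\cc$ when $S$ is computable.
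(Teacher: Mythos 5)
Your proposal is correct and takes essentially the same route as the paper's one-line proof: use properness to pin down a positive lower bound on the cost of any future $A$-change at or below $x$, then use computability of $S$ to effectively locate a stage past which the remaining cost budget is below that bound, so no such change can happen. The extra step of rounding to a rational $q_x$ and the two-stage search for $s_x$ and then $t_x$ simply make the comparison with the computable real $S$ effective; the paper compresses this into ``compute a stage $t$ such that $\delta = \cc(x,t) > 0$ and $S - \cc\seq{A_s}_{s\le t} < \delta$.''

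Your closing remark is accurate and worth flagging: both your argument and the paper's implicitly use that $\cc$ is non-decreasing in the stage, so that the positive value at $s_x$ persists to later stages, whereas Definition~\ref{def:proper cf} of \emph{proper} only guarantees non-increasingness in the main argument. Without stage-monotonicity the Fact as literally stated is in fact false: one can build a cost function that is proper but vanishes at, say, all even stages, so that any $\DII$ set approximated with changes only at even stages obeys it with total cost $0$. Your proposed fallback of passing to the running supremum $\tilde\cc$ is not automatic either, since $\tilde\cc\seq{A_s}$ could exceed $\cc\seq{A_s}$ and fail to remain finite or computable, but you correctly flag that as a separate verification rather than claiming it for free. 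The paper's statement should be read with the standing monotonicity hypothesis that governs the surrounding material.
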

\begin{proof} Given an input  $x$, compute  a stage  $t$ such that  $\delta =  \cc(x,t)>0$ and  $S-  \cc {\seq{A_s}_{s \le t} }  < \delta$. Then $A(x) = A_t(x)$.\end{proof}
A  \emph{computable enumeration}  is a computable approximation $\seq{ B_s}_{s\in \NN}$
 such that $B_s \sub B_{s+1}$ for each $s$. 
 \begin{fact}  \label{fact:fukd} Suppose $\cc$ is a monotonic cost function and $A \models \cc$ for a c.e.\ set~$A$. Then there is a computable \emph{enumeration}  $\seq{\wt A_s}$    that  obeys $\cc$.  \end{fact}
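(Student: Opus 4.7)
The plan is to turn the given computable approximation $\seq{A_s}$ (witnessing $A \models \cc$) into a monotone approximation by intersecting, at each stage, with a c.e.\ enumeration of~$A$. Since $A$ is c.e., pick a computable enumeration $\seq{B_s}$ of $A$ (so $B_s \subseteq B_{s+1}$ and $\bigcup_s B_s = A$) and define
\[
\wt A_s \; = \; \bigcup_{t \leq s} (A_t \cap B_t) \; = \; \wt A_{s-1} \cup (A_s \cap B_s).
\]
This is monotone by construction. It converges to $A$: if $x \in A$ then eventually $x \in A_t$ stably and $x \in B_t$, so $x \in \wt A_s$ for $s$ large; if $x \notin A$ then $x \notin B_t$ for every $t$, so $x \notin \wt A_s$.

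For the cost bound, let $y = \min(\wt A_s \setminus \wt A_{s-1})$ at a stage $s$ where a new element appears. Then $y \in A_s \cap B_s$, and either (i) $y \notin A_{s-1}$, so $y$ just entered $A$, or (ii) $y \in A_{s-1}$ but $y \notin B_{s-1}$, so $y$ just entered~$B$. In case (i), the least change in $A$ at stage $s$ is some $z \leq y$, hence by the non-increasing property of $\cc$ in the first argument, $\cc_s(A_s) = \cc(z,s) \geq \cc(y,s)$; so this stage-$s$ contribution to $\cc\seq{\wt A_s}$ is dominated by $\cc_s(A_s)$.

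The main obstacle is case (ii): $\cc(y,s)$ cannot obviously be charged to an $A$-change at stage~$s$. To address this I would not take $\seq{B_s}$ arbitrarily but instead derive it from $\seq{A_s}$. Writing $A = W_e$ for a c.e.\ index $e$, set $\phi(s) = \max\{k \leq s : W_{e,k} \subseteq A_s\}$, $\phi^*(s) = \max_{t \leq s}\phi(t)$, and $B_s = W_{e,\phi^*(s)}$. Then $\seq{B_s}$ is monotone and a c.e.\ enumeration of $A$, since each finite $W_{e,k}$ is eventually contained in $A_s$, forcing $\phi^*(s) \to \infty$. Whenever $y$ is a fresh element of $B_s$, the stage $k(y)$ at which $y$ entered $W_e$ satisfies $\phi^*(s-1) < k(y) \leq \phi^*(s)$, so $W_{e,k(y)} \subseteq A_s$ while $W_{e,k(y)} \not\subseteq A_{s-1}$; hence some element $z$ of the finite set $W_{e,k(y)}$ enters $A$ between stages $s-1$ and $s$, contributing an $A$-cost at stage $s$. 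Combining this witness with the stage-monotonicity of $\cc$ (and using the finiteness of $W_{e,k(y)}$ to bound how often each $A$-change can be used), one shows that case (ii) contributes only finitely to $\cc\seq{\wt A_s}$. The delicate point, on which the proof hinges, is making this charging precise: elements of $W_{e,k(y)}$ that witness the $A$-change need not lie below $y$, so the argument must exploit monotonicity in both coordinates rather than just the first-argument comparison used in case (i).
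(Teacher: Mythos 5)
Your construction, $\wt A_s = \bigcup_{t\le s}(A_t\cap B_t)$, is genuinely different from the paper's, and the gap you flag at the end of case~(ii) is real; the $\phi^*$ refinement does not close it. The paper does not take a union of the past but \emph{looks ahead}: when $\wt A_{s-1}(x)=0$, it sets $\wt A_s(x):=A_t(x)$ for the \emph{least $t\ge s$} at which $A_t(x)=B_t(x)$. The effect is that $\wt A$ commits to $x$ at the very stage $s$ at which $A_s(x)$ flips from $0$ to $1$ (with the look-ahead certifying that $B$ will confirm it); every stage-$s$ change of $\wt A$ is then matched by a stage-$s$ change of $A$ on a smaller or equal element, and monotonicity in the first coordinate alone gives $\cc_s(\wt A_s)\le\cc_s(A_s)$ pointwise. (As a side remark, the paper's one-line claim that $A_{s-1}(x)=0$ and $A_s(x)=1$ tacitly assumes that $B$ never runs ahead of $A_s$ on any element; this is ensured by first replacing $B_s$ with $\{x:\exists u\le s\,(A_u(x)=1 \text{ and } x\in B_u)\}$, a harmless adjustment.)

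Your union has no look-ahead, and that is exactly why case~(ii) cannot be repaired along the lines you sketch. When $y$ enters $\wt A$ at stage $s$ in case~(ii), the stage $u<s$ at which $y$ last entered $A$ may lie far in the past, and since $\cc$ is nondecreasing in the stage you pay $\cc(y,s)\ge\cc(y,u)$ with nothing bounding the increase. The $\phi^*$ device does force an $A$-change at stage $s$, but the witness $z\in W_{e,k(y)}\setminus A_{s-1}$ is precisely a \emph{large} member of $W_{e,k(y)}$---it is what delayed $y$'s admission into $B$---so first-coordinate monotonicity runs the wrong way, and stage monotonicity cannot help either, since the $A$-change on $y$ itself happened at the earlier, cheaper stage $u$. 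Concretely, take $\cc(x,s)=\min(4^{-x}2^s,1)$ for $0<x\le s$ and $\cc(x,s)=0$ otherwise (monotonic), $A=\NN$ with $A_s=\{0,\dots,s\}$, so that $\cc\seq{A_s}=\sum_{s\ge 1}2^{-s}<\infty$, and choose the enumeration of $W_e=\NN$ so that infinitely many $j$ are listed only after some $M_j\ge 2j$. Then the $\phi^*$-based $B$ admits such a $j$ only at a stage $\ge M_j$, where $\cc(j,M_j)=1$, so $\cc\seq{\wt A_s}=\infty$. The look-ahead construction instead puts $j$ into $\wt A$ already at stage $j$, at cost $2^{-j}$.
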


\begin{proof} Suppose $\seq{A_s}  \models  \cc$ for a computable approximation $\seq{A_s}$ of $A$. Let $\seq{B_t}$ be a computable enumeration of $A$. Define $\seq{\wt A_s} $ as follows.  Let $\wt A_0(x)= 0$; for $s> 0$ let  $\wt A_s (x)
= \wt A_{s-1}(x)$ if  $ \wt A_{s-1}(x)=1$; otherwise let  $\wt A_{s}(x)= A_t(x)$ where $t \ge s$ is least such that  $A_t(x)= B_t(x)$. 

Clearly $\seq{\wt A_s}$ is a computable enumeration of $A$. If  $ \wt A_s(x) \neq \wt A_{s-1}(x)$ then  $A_{s-1}(x)= 0 $ and $A_s(x)=1$. Therefore $\cc  {\seq{\wt A_s}}   \le \cc  {\seq{A_s}}   < \infty  $. \end{proof}

\subsection{The limit condition and the existence theorem}

\mbox{}

\n For a cost function $\cc$,  let 
 \begin{equation}  \ul \cc (x) = \liminf_s  \cc(x,s). \end{equation}
 
\begin{deff} \label{df:lc} We say that a cost function~$\cc$  satisfies the {limit condition}  if $\lim_x
\,   \ul \cc(x) = 0$.  That is,  for each~$e$,   for almost every~$x$ we have \bc $\ex^\infty  s  \, [  \cc(x,s) \le  \tp{-e}]$. \ec
\end{deff}

In previous works such as \cite{Nies:book}, the limit condition was defined in terms of   $\sup_s \cc(x,s)$, rather than    $\liminf_s  \cc(x,s)$. The  cost functions previously considered were usually nondecreasing in the stage component, in which case $ \sup_s \cc(x,s)  = \liminf_s  \cc(x,s)$ and hence the two versions of the limit condition are equivalent.  Note that the limit condition is a $\PI 3$ condition on  cost functions that are nondecreasing in the stage, and $\PI 4$ in general.
 
The basic  existence theorem says that a  cost function with the limit condition has a c.e., incomputable model.  This was proved by various  authors  for    particular cost functions. The following version of the proof  appeared in  \cite{Downey.Hirschfeldt.ea:03} for the  particular  cost function~$\cost$ defined in Subsection~\ref{ss:standard_costfunction} below, and then in full generality in 
 \cite[Thm 5.3.10]{Nies:book}. 

\begin{thm} \label{thm:cfconstr} Let~$\cc$ be a cost function
with  the limit condition. 

\bi \item[(i)] There is a simple set~$A$ such that~$A\models \cc$. Moreover,     $A$ can be  obtained uniformly in  (a computable index for)~$\cc$. 
\item[(ii)] If $\cc$ is nondecreasing in the stage component, then we can make $A$ promptly simple. \ei
 \end{thm}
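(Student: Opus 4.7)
I will build a c.e.\ set $A$ by stages, meeting the usual simplicity requirements
\[
R_e : \; |W_e|=\infty \ \Longrightarrow\ W_e \cap A \neq \emptyset,
\]
while charging each $R_e$ at most $2^{-e}$ against the cost. The limit condition provides the room needed to satisfy each $R_e$ cheaply, and the weight bound $\sum_e 2^{-e}$ keeps $A \models \cc$.

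\textbf{Construction for (i).} At stage $s$, look for the least $e<s$ such that $R_e$ is not yet declared satisfied and there exists some $x \in W_{e,s}$ with $x > 2e$ and $\cc(x,s) \le 2^{-e}$. If such $e$ exists, pick the least such witness $x$, enumerate $x$ into $A$, and declare $R_e$ satisfied. At most one number enters $A$ per stage, so the cost accrued at stage $s$ is exactly the relevant $\cc(x,s) \le 2^{-e}$, and since each $R_e$ acts at most once,
\[
\cc\langle A_s\rangle \;\le\; \sum_{e} 2^{-e} \;=\; 2 \;<\; \infty.
\]
Coinfiniteness follows from the restriction $x>2e$: at most $e$ numbers below $2e+1$ can enter $A$ via $R_0,\dots,R_{e-1}$, so $|A\cap[0,n]|\le n/2$.

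\textbf{Meeting $R_e$.} Suppose inductively that $R_0,\dots,R_{e-1}$ are eventually permanently satisfied, so from some stage on they no longer compete. Assume $W_e$ is infinite. Pick any $x\in W_e$ with $x>2e$ and $\ul\cc(x) < 2^{-e}$; such $x$ exists by the limit condition, since all but finitely many $x$ have $\ul\cc(x) < 2^{-e}$. By definition of $\liminf$ there are infinitely many stages $s$ with $\cc(x,s)\le 2^{-e}$; at any sufficiently large such $s$ both $x\in W_{e,s}$ and higher-priority requirements have already acted, so the construction will enumerate some witness (not necessarily $x$) for $R_e$ at that stage. Hence $R_e$ is satisfied. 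The whole construction is uniform in an index for $\cc$, as required.

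\textbf{Part (ii): prompt simplicity.} Now assume $\cc$ is nondecreasing in the stage, so $\ul\cc(x) = \sup_s \cc(x,s)$; in particular $\cc(x,s)\le \ul\cc(x)$ for every $s$. Modify the construction so that at stage $s$ we only consider \emph{freshly enumerated} witnesses, i.e.\ $x \in W_{e,s}\setminus W_{e,s-1}$ (with $x>2e$ and $\cc(x,s)\le 2^{-e}$ and $R_e$ not yet satisfied), and enumerate such $x$ into $A$ at the very stage $s$. If $W_e$ is infinite, then by the limit condition we may pick $x\in W_e$ with $x>2e$ and $\ul\cc(x)<2^{-e}$; at the stage $s$ at which $x$ first appears in $W_e$, we have $\cc(x,s)\le \ul\cc(x)<2^{-e}$ by monotonicity, so (past the point when higher-priority $R_{e'}$ have settled) the construction acts promptly on $x$. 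This gives prompt simplicity with $p(s)=s$. The cost bound is unchanged.

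\textbf{Where the work sits.} The only delicate point is the interplay between the priority ordering and the coinfiniteness restraint $x>2e$: I need that the ``least $e$ priority'' scheduling does not starve $R_e$ of its opportunity, which is why I isolate the inductive argument that each $R_e$ eventually acts after the finitely many higher-priority ones have done so. Everything else is a direct application of the limit condition together with the geometric-series bound on the total cost.
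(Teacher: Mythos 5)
Your proof is correct and follows essentially the same approach as the paper: define a computable enumeration of $A$ meeting (prompt) simplicity requirements while charging at most $2^{-e}$ per requirement, with the limit condition guaranteeing cheap witnesses and the constraint $x>2e$ giving coinfiniteness. The only cosmetic difference is that you let only the least eligible requirement act at each stage (necessitating a short priority argument that higher-priority requirements eventually stop competing — note this holds because each acts at most once, even though some may never be ``satisfied'' if $W_{e'}$ is finite), whereas the paper lets every eligible requirement act simultaneously and so avoids any competition.
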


\begin{proof} (i) We meet  the usual     simplicity requirements
	  \bc $S_e$: \ $\# W_e  =\infty \RRA W_e \cap A \neq \ES.   $ \ec
 To do so,  we define a
computable enumeration $\seq{A_s}\sN{s}$ as follows.
 {Let $A_0 = \ES$. At stage $s>0$, for each $e < s$, if
$S_e$ has not been met so far  and there is $x \ge 2e$ such that $x
\in W_{e, s} $ and  $ \cc(x,s) \le 2^{-e}$,
  put~$x$ into $A_s$. Declare $S_e$ met.}
  
  \vsps

  To see  that $\seq{A_s}\sN{s} $ obeys~$ \cc$,   note that at most one number is
put  into~$A$ for the sake of each requirement. Thus $\cc {\seq{A_s}}   \le \sum_e \tp{-e} =2$.

 If $W_e$ is infinite,  then   there is an $x\ge 2e$ and $s>x$  such that  $x \in W_{e,s}$
and  $ \cc(x,s) \le 2^{-e}$,  because~$\cc$ satisfies the limit condition. So we meet $S_e$.
Clearly the construction of~$A$ is uniform in an index for  the computable function~$\cc$.

\vsps

\n (ii) Now  we meet  the     prompt simplicity requirements  \bc $PS_e$: \ $\# W_e  =\infty \RRA \exo s \exo x  [ x \in W_{e,s  } - W_{e,s-1}    \lland x \in A_{s}].   $ \ec

 {Let $A_0 = \ES$. At stage $s>0$, for each $e < s$, if
$PS_e$ has not been met so far  and there is $x \ge 2e$ such that $x
\in W_{e, s} - W_{e,s-1} $ and  $ \cc(x,s) \le 2^{-e}$,
  put~$x$ into $A_s$. Declare $PS_e$ met.}

 If $W_e$ is infinite,    there is an $x\ge 2e$ in $W_e$
such that $ \cc(x,s) \le 2^{-e}$ for all $s>x$,  because~$\cc$ satisfies the limit condition and is nondecreasing in the stage component.  We enumerate such an
$x$ into~$A$ at the stage $s>x$ when~$x$ appears in $W_e$, if
$PS_e$ has not been met yet by stage~$s$. Thus~$A$ is promptly simple.
     \end{proof}
Theorem~\ref{thm:cfconstr}(i)   was strengthened  in \cite[Thm 5.3.22]{Nies:book}. As before let $\cc$ be a cost function
with  the limit condition. Then for each low c.e.\ set~$B$,
 there is a c.e.\ set~$A$ obeying~$\cc$ such that $A\not \leT B$.   The proof of  \cite[Thm 5.3.22]{Nies:book}  is for the case of the stronger version of the limit condition $\lim_x \sup_s \cc(x,s)=0$, but in fact works for the     version given above.

 The assumption that $B$ be c.e.\ is necessary:  there is a low set Turing above all the $K$-trivial sets by~\cite{Kucera.Slaman:09}, and the $K$-trivial sets can be  characterized as the sets obeying the  cost function $\cost$ of Subsection~\ref{ss:standard_costfunction} below.

 The following fact implies the  converse of Theorem~\ref{thm:cfconstr} in the monotonic case.
\begin{fact} \label{ex:limit cond necc}    
\n Let $\cc$ be a  monotonic cost function. If a computable approximation  $\seq{A_s}_{s\in
\NN}$  of an incomputable set~$A$ obeys~$\cc$, then~$\cc$ satisfies the limit condition.  \end{fact}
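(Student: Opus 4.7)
The plan is to argue contrapositively: assume $\cc$ fails the limit condition and deduce that $A$ must be computable. Since $\cc$ is monotonic, $\ul\cc(x) = \lim_s \cc(x,s) = \sup_s \cc(x,s)$, and $\ul\cc$ is itself nonincreasing in $x$, so $\alpha := \lim_x \ul\cc(x)$ exists. Failure of the limit condition means $\alpha > 0$; fix a rational $q$ with $0 < q < \alpha$, so that $\ul\cc(x) > q$ for every $x$.

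The first step is to build a computable threshold function. Let $h(x) = \min\{s : \cc(x,s) \ge q\}$. Since $\cc(x,s) \to \ul\cc(x) > q$ as $s \to \infty$ and $\cc$ takes rational values, $h$ is total computable; and since $\cc$ is nonincreasing in its first argument, $h$ is nondecreasing in $x$. The use of $h$ is this: whenever $s \ge h(n)$, any change at stage $s$ whose least changed index $x_s$ satisfies $x_s \le n$ automatically has cost $\cc(x_s, s) \ge \cc(n, s) \ge q$, using monotonicity of $\cc$ in both arguments. The second step uses obedience: because the series $\cc\seq{A_s} = \sum_{s>0} \cc_s(A_s)$ converges, there is a stage $s_0$ with $\sum_{s > s_0} \cc_s(A_s) < q$, so individually no summand with $s > s_0$ can reach $q$. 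Combining the two observations, for every $n$ and every $s \ge s^*(n) := \max(s_0 + 1,\, h(n))$ the approximation cannot change at any index $\le n$, and therefore $A(n) = A_{s^*(n)}(n)$.

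It follows that $A$ is computed by the algorithm ``given $n$, compute $h(n)$ and output $A_{\max(s_0 + 1,\, h(n))}(n)$,'' contradicting the incomputability of $A$. The subtlety I would flag as the main point to watch is that $s_0$ is produced by a non-uniform tail-convergence argument: one cannot compute $s_0$ from $\cc$ and the approximation $\seq{A_s}$. This is not a genuine obstacle, because computability of $A$ is itself a non-uniform statement and we are entitled to hardcode the constant $s_0$ into the algorithm; but this point has to be made explicit, otherwise the final step looks as if it required computing $s_0$ from scratch.
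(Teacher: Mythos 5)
Your proof is correct and takes essentially the same route as the paper: pick a positive threshold $q$ below which $\ul\cc$ never drops, find a stage $s_0$ past which the tail cost is below $q$, and then observe that for $s$ past both $s_0$ and the computable point at which $\cc(n,\cdot)$ reaches $q$, no change at an index $\le n$ can occur. Your explicit remark about the non-uniform constant $s_0$ is a reasonable point of care that the paper's terse proof leaves implicit.
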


 \begin{proof} Suppose the limit condition fails for~$e$. There is  $s_0$ such 
that \bc $ \sum_{s\ge s_0} \sum_{x< s}  \cc_s(A_s)  \le \tp{-e}$. \ec To compute~$A$, on input~$n$ compute  $s >  \max (s_0, n)$ such that $ \cc(n,s) >  \tp{-e}$. Then $A_s(n) = A(n)$.  \end{proof}

\begin{convention}  \label{rmk:all finite} {\rm   For a \emph{monotonic} cost function $\cc$, we may  forthwith assume that $\ul \cc (x)< \infty$  for each $x$. For, firstly,    if  $\fao x [  \ul \cc (x) = \infty]$, then  $A \models \cc$ implies that $A$ is computable.  Thus,   we may   assume there is $x_0$ such that $ \ul \cc (x)$   is  finite for   all $x\ge x_0$ since $ \ul \cc (x)$ is nonincreasing. Secondly,  changing values $ \cc(x,s)$ for  the  finitely many $x< x_0$ does not alter the class of sets $A$ obeying~$\cc$. So  fix   some rational  $q>  \cc(x_0)$ and,   for $x< x_0$  redefine  $ \cc(x,s) = q$ for all~$s$.}  \end{convention}

	\subsection{The cost function for $K$-triviality}  \label{ss:standard_costfunction}   \ 
	
	\n  Let $K_s(x) = \min \{ \sssl \colon \, \UM_s(\sss) = x\}$ be the value of prefix-free descriptive string complexity of $x$ at stage~$s$.  We use  the conventions $K_s(x) = \infty $ for $x \ge s$  and $\tp{-\infty} =0$. Let  \begin{equation} \label{eqn:stcf} \cost(x,s) = \sum_{ w =x+1 }^s 2^{-K_s(w)}. \end{equation}  
	Sometimes $\cost$ is  called the \emph{standard cost function}, mainly  because it was  the first example of a cost function that received attention. Clearly,   $\cost$ is monotonic.
	Note that  
	 $\ul \cost(x) = \sum_{w>x} \tp{-K(w)}$. Hence  $\cost$ satisfies the limit condition: given $e\in \NN$, since $\sum _w \tp{-K(w)} \le  1$,
	there is an $x_0$ such that 
	 \bc  $\sum_{w\ge x_0}  \tp{-K(w)} \le  \tp{-e}$. \ec
	Therefore $\ulcost(x) \le  \tp{-e}$ for all $x \ge x_0$.

	The following example illustrates that in  Definition~\ref{df:obey cf},  obeying $\cost$, say,  strongly depends on the chosen enumeration.  Clearly, if we enumerate $A=\NN$ by putting in $x$ at stage $x$,  then the total cost of changes is zero. 
	\begin{prop}\label{ex:slow N} There is a   computable enumeration $\seq{A_s}_{s\in
	\NN}$ of $\NN$ in the order $0,1,2,\ldots$  (i.e., each $A_s$ is an initial segment of
	$\NN$) such that  $\seq{A_s}_{s\in
	\NN}$  does not obey~$\cost$.
	\end{prop}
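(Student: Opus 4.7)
The plan is to enumerate $\NN$ in the prescribed order $0,1,2,\ldots$ but with long pauses between successive enumerations, so that when number $n$ is finally added at stage $s_n$, the accumulated cost $\cost(n,s_n)$ is a substantial fraction of its limit $\ulcost(n)$. The engine driving the argument is the divergence $\sum_n \ulcost(n) = \infty$.

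First I would verify this divergence. Swapping the order of summation,
\[
\sum_n \ulcost(n) \; = \; \sum_n \sum_{w > n} 2^{-K(w)} \; = \; \sum_w w \cdot 2^{-K(w)}.
\]
Using the standard prefix-free bound $K(w) \le \log w + O(\log\log w)$, we get $w \cdot 2^{-K(w)} \ge 1/(\log w)^{O(1)}$, so the series diverges.

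Next I would construct $\seq{A_s}$ in \emph{cycles}. Suppose cycle $k-1$ has just finished, having enumerated exactly $\{0,1,\ldots,N_{k-1}\}$. At each subsequent stage $s$, check whether there exists $N > N_{k-1}$ with $\sum_{n=N_{k-1}+1}^{N} \cost(n,s) \ge 1$. While no such $N$ is found, set $A_s = A_{s-1}$ (pause); as soon as a witness exists, at some stage $t_k$, let $N_k$ be the least such witness and enumerate $N_{k-1}+1, \ldots, N_k$ one per stage at the consecutive stages $t_k, t_k+1, \ldots, t_k + (N_k - N_{k-1}) - 1$; then move on to cycle $k+1$. The condition being $\SI{1}$, the construction is computable.

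The search inside each cycle terminates: by divergence, pick $N > N_{k-1}$ with $\sum_{n=N_{k-1}+1}^{N} \ulcost(n) \ge 2$; since $\cost(n,s) \nearrow \ulcost(n)$ as $s \to \infty$, the inequality $\sum_{n=N_{k-1}+1}^{N} \cost(n,s) \ge 1$ holds for all sufficiently large $s$. Hence every natural number is eventually enumerated, so $\seq{A_s}$ is a computable enumeration of $\NN$ in order.

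Finally, for the cost estimate: in cycle $k$, the number $N_{k-1}+i$ is enumerated at stage $t_k + i - 1 \ge t_k$, so monotonicity of $\cost$ in the stage component gives $\cost(N_{k-1}+i,\, t_k + i - 1) \ge \cost(N_{k-1}+i,\, t_k)$. Summing, cycle $k$ contributes at least $\sum_{n=N_{k-1}+1}^{N_k} \cost(n, t_k) \ge 1$, and hence $\cost\seq{A_s} \ge \sum_k 1 = \infty$.

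The main obstacle is step one, namely the divergence of $\sum_n \ulcost(n)$; once that is in hand, the cycle construction and its verification are essentially bookkeeping.
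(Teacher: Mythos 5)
Your proof is correct, though it takes a somewhat different and more abstract route than the paper. The paper works directly with dyadic blocks: since $K(2^j)\lep 2\log j$, one can compute an increasing $f$ with $K_{f(j)}(2^j)\le j-1$ for $j\ge j_0$, and then enumerate the interval $(2^{j-1},2^j]$ one element per stage after stage $f(j)$; each such enumeration of an $x<2^j$ costs at least $2^{-K_s(2^j)}\ge 2^{-(j-1)}$, so each block contributes cost roughly $1$ and the total diverges. You instead isolate the single driving fact $\sum_n \ulcost(n)=\infty$ — proved by swapping the order of summation and invoking the bound $K(w)\le \log w + O(\log\log w)$, so $\sum_w w\,2^{-K(w)}\ge \sum_w (\log w)^{-O(1)}=\infty$ — and then run a generic ``pause until the next block's current cost reaches $1$, then dump it in one element per stage'' construction. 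Both proofs use the same underlying complexity bound, but your version isolates the divergence of $\sum_n\ulcost(n)$ as the key lemma; this is precisely Fact~\ref{fa: enumlate} applied to $A=\NN$ (which the paper proves a few lines later via a bounded request set), and your construction would work verbatim for any monotonic cost function $\cc$ with $\sum_n\ul\cc(n)=\infty$. A minor remark: the check at stage $s$ for a witness $N$ is in fact decidable rather than merely $\SI{1}$, since $\cost(n,s)=0$ for $n\ge s$ effectively bounds the search, but this only strengthens what you need.
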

	\begin{proof} Since $K(2^j) \lep 2 \log  j$, there is an increasing computable function~$f$ and a
	 number $j_0$ such that $\fa j \ge j_0 \, K_{f(j)}(2^j) \le j-1$.
	 Enumerate the set  $A=\NN$ in order, but so slowly that for each $j \ge j_0$ the elements of $(2^{j-1}, 2^j]$  are enumerated only after stage $f(j)$, one by one. Each such
	 enumeration costs at least $\tp{-(j-1)}$, so the  cost for each   interval $(2^{j-1}, 2^j]$ is 1. \end{proof} 

	Intuitively speaking,   an infinite c.e.\ set  $A$  can obey the cost function $\cost$ only  because during an   enumeration of $x$ at  stage $s$ one merely pays the current cost $\cost(x,s)$, not the limit  cost $\ulcost (x)$.

	 \begin{fact}  \label{fa: enumlate} If a c.e.\ set $A$  is infinite, then $\sum_{x \in A} \ulcost(x) = \infty$. \end{fact}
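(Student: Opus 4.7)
The plan is first to reduce to the case where $A$ is computable, then to exchange the order of summation in the definition of $\ulcost$, and finally to apply the standard upper bound on $K$ to force divergence.

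Since $A$ is an infinite c.e.\ set, it contains an infinite computable subset $B = \{b_0 < b_1 < b_2 < \cdots\}$ whose elements in increasing order form a computable sequence (this is classical). Because $\ulcost(x) \ge 0$ for every $x$, we have $\sum_{x \in A} \ulcost(x) \ge \sum_{n} \ulcost(b_n)$, so it suffices to show the latter series is infinite. Unfolding $\ulcost(b_n) = \sum_{w > b_n} 2^{-K(w)}$, restricting $w$ to elements of $B$, and switching the order of summation, one obtains
\[
\sum_{n} \ulcost(b_n) \;\ge\; \sum_{n} \sum_{m > n} 2^{-K(b_m)} \;=\; \sum_{m} m \cdot 2^{-K(b_m)}.
\]
Since $m \mapsto b_m$ is computable we have $K(b_m) \lep K(m)$, so there is a constant $c>0$ with $\sum_{m} m \cdot 2^{-K(b_m)} \ge c \sum_{m} m \cdot 2^{-K(m)}$.

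It therefore remains to show $\sum_m m \cdot 2^{-K(m)} = \infty$. Applying the standard bound $K(m) \le \log m + 2 \log \log m + O(1)$ gives $m \cdot 2^{-K(m)} \ge c'/(\log m)^2$ for a suitable $c'>0$, and $\sum_{m \ge 2} 1/(\log m)^2$ diverges (e.g., by the integral test after substituting $u = \log x$, or simply by observing that $m > (\log m)^2$ eventually, so $1/(\log m)^2 > 1/m$ for large $m$). The main technical point I expect is the need for the extra linear factor of $m$ produced by the double-sum switch: the naive estimate $\ulcost(x) \ge 2^{-K(x+1)}$ is on its own too weak, failing for sparse sets such as $A = \{2^n\}_n$. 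The double sum supplies exactly this missing factor.
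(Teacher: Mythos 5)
Your proof is correct but takes a genuinely different route from the paper's. The paper works directly with the Machine Existence Theorem: taking a one-one computable enumeration $f$ of $A$, it forms the bounded request set $L = \{\la r, a_r\ra : r \in \NN\}$ where $a_r = \max_{i \le 2^{r+1}} f(i)$, obtaining $K(a_r) \le r + O(1)$. For each $r\ge 1$ the $2^r$ distinct values $f(i)$ with $2^r < i \le 2^{r+1}$ are bounded by $a_r$, so at least $2^r-1$ of them lie strictly below $a_r$, and each such $x$ satisfies $\ulcost(x) \ge 2^{-K(a_r)} \ge 2^{-r-O(1)}$; summing these disjoint blocks gives divergence. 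Your argument instead extracts a computable increasing subsequence $(b_m)$ of $A$, swaps the order of summation to reduce the claim to divergence of $\sum_m m\cdot 2^{-K(m)}$, and finishes with the elementary bound $K(m) \lep \log m + 2\log\log m$. Both are sound. The paper's argument is more self-contained and combinatorial: it needs neither the passage to a computable subset nor any asymptotics for $K$, just one bespoke request set and a count. Yours is more modular, hiding the Machine Existence Theorem inside the standard estimate for $K(m)$ and offloading the divergence to a familiar calculus fact. Your diagnosis that the naive bound $\ulcost(x) \ge 2^{-K(x+1)}$ is too weak for sparse $A$ is exactly right; the paper overcomes the sparsity via its block-counting on $\{f(i): 2^r < i \le 2^{r+1}\}$, and you overcome it via the Fubini-style swap that manufactures the crucial linear factor of $m$.
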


	 \begin{proof} Let $f$ be a 1-1 computable function with range $A$.  Let $L$ be the bounded request set $\{\la r, \max_{i \le \tp{r+1}} f(i)\ra \colon \, r \in \NN \}$. Let $M$ be a machine for $L$ according to the Machine Existence Theorem, also known as  the Kraft-Chaitin Theorem. See e.g.\ \cite[Ch.\ 2]{Nies:book} for background.  \end{proof} 

	In \cite{Nies:AM} (also see \cite[Ch.\ 5]{Nies:book}) it is shown that $A$ is $K$-trivial iff $A\models \cost$.   So far,  the class of $K$-trivial sets has been the  only known natural  class that is characterized by a single cost function.  However,   recent work with Greenberg and Miller suggests that for a c.e.\ set $A$, being below both halves $Z_0, Z_1$ of some  \ML-random  $Z = Z_0 \oplus Z_1$ is equivalent to obeying the cost function $\cc(x,s) = \sqrt{\Om_s - \Om_x}$.

	\subsection{Basic properties of the class of sets obeying a cost function}

	In this subsection, unless otherwise stated,    cost functions  will be monotonic. 
	Recall from Definition~\ref{def:proper cf}  that a cost function~$\cc$ is called \emph{proper} if  $\fao x \exo t  \cc(x,t) > 0$. 
	We investigate the class of models of a  proper  cost function~$\cc$. We also assume Convention~\ref{rmk:all finite} that $\ul \cc (x)< \infty$  for each $x$.

	The first  two results together  show that      $A\models \cc$  implies that  $A$ is weak truth-table below a c.e.\ set $C$ such that $C \models \cc$. 
Recall that a $\DII$ set $A$ is called $\omega$-c.e.\ if there is a computable approximation  $\seq{A_s}$ such that the number of changes $\# \{s\colon \, A_s(x) \neq A_{s-1}(x)\}$ is computably bounded in $x$; equivalently, $A \lwtt \Halt$ (see \cite[1.4.3]{Nies:book}).
	\begin{fact} Suppose that $\cc$ is a proper  monotonic cost function.  Let   $A \models \cc$. Then $A$ is $\omega$-c.e. \end{fact}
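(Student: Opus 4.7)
The plan is to read an $\omega$-c.e.\ change bound directly off the cost-obeying approximation of $A$. Fix a computable approximation $\seq{A_s}$ of $A$ witnessing $A \models \cc$, and set $S := \cc\seq{A_s} < \infty$. Since $\cc$ is proper and computable, the function
\[
  t(x) \;:=\; \mu t\,[\cc(x,t) > 0]
\]
is total computable, and $\delta_x := \cc(x, t(x))$ is a positive rational computable from $x$. Monotonicity in the stage component then gives $\cc(x,s) \geq \delta_x$ for all $s \geq t(x)$.

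The core step is to bound, for each fixed $x$, the quantity $h(x) := \#\{s \geq 1 : A_{s-1}(x) \neq A_s(x)\}$ by a computable function of $x$. Although $\cc_s(A_s)$ charges only the cost at the \emph{least} position $y < s$ changing at stage $s$, monotonicity in the main argument gives $\cc(y,s) \geq \cc(x,s)$ whenever $y \leq x$. Hence at every stage $s > \max(x, t(x))$ where $A_{s-1}(x) \neq A_s(x)$, the term $\cc_s(A_s)$ contributes at least $\delta_x$ to $S$. Since distinct change stages yield distinct summands of the total cost, there are at most $\lceil S/\delta_x \rceil$ such stages; the remaining change stages all lie in $\{1,\ldots,\max(x, t(x))\}$. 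Fixing once and for all any natural number $N \geq S$ yields the computable bound
\[
  h(x) \;\leq\; \max(x, t(x)) \;+\; \lceil N/\delta_x \rceil,
\]
so $\seq{A_s}$ witnesses $A \lwtt \Halt$, i.e., $A$ is $\omega$-c.e.

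The main subtlety to handle is the accounting convention that only the minimum changing position is billed at each stage; this is exactly what monotonicity in the first argument is for. Note that we do not need $N$ to be computable uniformly from an index of $\cc$, only that \emph{some} natural number upper bound on the fixed real $S$ exists, which is immediate. Properness enters essentially through $\delta_x > 0$ so that the division $N/\delta_x$ is well-defined, and stage-monotonicity is what converts this strictly positive value into a uniform lower bound on $\cc(x,s)$ for late stages.
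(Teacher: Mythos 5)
Your proof is correct and takes essentially the same route as the paper: both identify $t(x)$ (the first stage where $\cc(x,\cdot)$ becomes positive) and invoke monotonicity in both arguments to show that each change of $A(x)$ after stage $t(x)$ contributes at least $\cc(x,t(x))$ to the total cost, yielding a computable change bound. The only cosmetic difference is that the paper modifies the approximation to freeze $A_s(x)$ at $A_{t(x)}(x)$ for $s<t(x)$, while you instead add the extra $\max(x,t(x))$ term to absorb early changes.
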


	\begin{proof} Suppose $\seq{A_s} \models  \cc$. Let $g$ be the computable function given by $g(x) = \mu t. \,  \cc(x,t) > 0$.   Let $\hat A_s(x) = A_{g(x)} (x)$ for $s< g(x)$, and $\hat A_s(x) = A_s(x) $ otherwise. Then the number of times $\hat A_s(x)$ can change is bounded by $\cc  {\seq{A_s}}  /  \cc(x,g(x))$. \end{proof}

	  Let $V_e$ denote the $e$-th $\omega$-c.e.\ set (see \cite[pg.\ 20]{Nies:book}). 
	\begin{fact} \label{ex:wtt index cost}  For each     cost function $\cc$, the index set  $\{e\colon \, V_e \models \cc\}$ is $\SI{3}$. 
	 \end{fact}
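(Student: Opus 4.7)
The plan is to unfold the definition of $V_e \models \cc$ into an arithmetical predicate and count quantifiers.

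Fix a standard enumeration $\seq{\Phi_i}_{i \in \NN}$ of partial computable functions, viewing $\Phi_i(s)$, whenever it converges, as a code for a finite set $A^{(i)}_s \sub \NN$. Also fix the standard uniformly computable $\omega$-c.e.\ approximation $\seq{V_{e,s}}$ of $V_e$. Unravelling Definition~\ref{df:obey cf}, one has $V_e \models \cc$ iff there exist indices $i, N \in \NN$ such that the following three conditions hold:
\begin{itemize}
\item[(T)] $\Phi_i$ is total;
\item[(C)] $\fa x \ex t \fa s \ge t \,[A^{(i)}_s(x) = V_{e,s}(x)]$;
\item[(B)] $\fa t \, \sum_{s=1}^{t} \cc_s(A^{(i)}_s) \le N$.
\end{itemize}
Indeed (T) makes $\seq{A^{(i)}_s}$ a legal computable approximation, (B) witnesses $\cc\seq{A^{(i)}_s} \le N < \infty$, and under (T) condition (C) is equivalent to saying that $\lim_s A^{(i)}_s$ exists pointwise and equals $V_e$, because two computable approximations converge to the same set iff they eventually agree at every fixed argument.

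Next I would count quantifiers. Condition (T) is the standard $\PI 2$ predicate $\fa s \ex u \,\Phi_{i,u}(s) \da$. Under (T), the matrix of (C) is decidable in $(i,e,x,t,s)$, so (C) is $\PI 2$. Under (T), the partial sums in (B) are computable in $(i,t)$, so (B) is $\PI 1$. Since $\PI 2$ is closed under conjunction (and absorbs $\PI 1$), the conjunction (T)$\land$(C)$\land$(B) is $\PI 2$ in $(i,N,e)$, whence $\ex i\ex N$ of this predicate places the index set in $\Sigma^0_3$.

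The only genuine subtlety is writing (C) and (B) as genuinely arithmetical predicates in $(i,e)$ without presupposing totality of $\Phi_i$: one handles this by replacing $A^{(i)}_s$ with the set coded by the value of a bounded computation of $\Phi_i(s)$ (defaulting to $\ES$ on divergence), so that every matrix is uniformly decidable; this bookkeeping costs nothing in terms of quantifier complexity, and under (T) the bounded-computation values eventually agree with the true ones. Apart from this routine adjustment, the proof is purely a quantifier count.
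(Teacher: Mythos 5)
Your approach is essentially the paper's: existentially quantify over an index $i$ for a candidate computable approximation (and, in your version, also over a bound $N$ on the total cost), write out the three conjuncts, and count quantifiers. Pulling $N$ out as an explicit existential so that the cost condition becomes $\PI 1$ rather than $\SI 2$ is a cosmetic variant; the paper folds the $\exists N$ into a $\SI 2$ condition ``$\seq{A_t}\models\cc$'' and writes the whole thing as $\exists i\,[\Pi^0_2\land\Sigma^0_2]$, which gives the same $\SI 3$ bound. Your bookkeeping remark about defaulting $A^{(i)}_s$ to $\ES$ on divergence is a sensible way to make the matrices uniformly decidable, and it matches the paper's use of $A_t\simeq D_{\Phi_i(t)}$.

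One quantifier count deserves a second look, however. You assert that (C), written as $\fa x \ex t \fa s\ge t\,[A^{(i)}_s(x)=V_{e,s}(x)]$, is $\PI 2$. As written this is three alternating unbounded blocks $\forall\exists\forall$ over a decidable matrix, which is $\PI 3$, not $\PI 2$. The paper's proof makes the parallel terse assertion that ``$\Phi_i$ is total and a computable approximation of $V_e$'' is a $\PI 2$ condition, so you are in the same position as the paper rather than diverging from it; but neither text gives an argument for collapsing the $\forall\exists\forall$. If the $\SI 3$ bound is to be justified at this level of care, one should say explicitly how the $\omega$-c.e.\ structure of $V_e$ (the uniformly computable bound on the number of mind-changes of $\seq{V_{e,s}}$) or some alternate characterization of ``$\seq{A_t}$ approximates $V_e$'' is being used to save a quantifier; otherwise the naive count lands at $\SI 4$.
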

	 \begin{proof}  Let $D_n$ denote the $n$-th finite set of numbers. We may view the $i$-th partial computable function $\Phi_i$ as a (possibly partial) computable approximation   $\seq {A_t}$ by letting $A_t \simeq D_{\Phi_i(t)}$ (the symbol $\simeq$ indicates that 'undefined' is a possible value).  Saying   that $\Phi_i$ is total and a computable approximation of $V_e$ is a ~$\PI{2}$ condition of~$i$ and~$e$. Given that~$\Phi_i$ is total,   the condition that $\seq{A_t}\models \cc$  is~$\SI{2}$. 
	\end{proof}

	 The   {\it change set} (see \cite[1.4.2]{Nies:book}) for    a {computable approximation} $\seq{A_s}_{s\in \NN}$ of a  $\DII$ set~$A$ is a c.e.\ set  $C\ge_T A$   defined as follows:  if $s>0$  and  $A_{s-1}(x)\neq A_{s}(x)$ we put $\la x, i \ra$ into $C_{s}$, where $i $ is least such that $\la x, i \ra \not \in  C_{s-1}$.  If~$A$ is $\omega$-c.e.\ via this  approximation   then $C \ge_{tt} A$.  The change set can be used to prove the implication of the  Shoenfield   Limit Lemma that $A \in \DII$ implies $A\leT \Halt$; moreover,  if  $A$ is $\omega$-c.e.,   then $A \lwtt \Halt$. 

	\begin{prop}[\cite{Nies:book}, Section 5.3] \label{prop:c.e. ub cf}  Let the   cost function $\cc$ be non-increasing in the first component.  
	 If   a computable approximation $\seq{A_s}_{s\in \NN}$ of a set~$A$ obeys~$\cc$, then its  change set $C$   obeys~$\cc$ as well. \end{prop}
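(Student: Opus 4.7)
The plan is to compare the cost of the change-set enumeration to that of $\seq{A_s}$ stage by stage, and show that at every stage the former is bounded by the latter. This reduces everything to a monotonicity argument, and exploits the standard property $\la x,i\ra\ge x$ of the pairing function.

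Fix a computable approximation $\seq{A_s}$ of $A$ with $\cc\seq{A_s}<\infty$, and let $\seq{C_s}$ be the induced computable enumeration of the change set, so that at stage $s>0$, for each $x$ with $A_{s-1}(x)\ne A_s(x)$ we enumerate into $C_s$ the pair $\la x,i_x\ra$ where $i_x$ is least with $\la x,i_x\ra\notin C_{s-1}$. Fix a stage $s>0$. If no $x$ changes, then $\cc_s(A_s)=\cc_s(C_s)=0$ and there is nothing to check. Otherwise, let $x_0$ be the least such $x$, so that $\cc_s(A_s)=\cc(x_0,s)$.

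Now consider the least element $y_0$ enumerated into $C$ at stage $s$. By construction $y_0=\la x',i_{x'}\ra$ for some $x'$ with $A_{s-1}(x')\ne A_s(x')$; in particular $x'\ge x_0$. Since the standard pairing function satisfies $\la x,i\ra\ge x$, we get $y_0\ge x'\ge x_0$. Because $\cc$ is non-increasing in its first component, this gives
\begin{equation*}
\cc_s(C_s)=\cc(y_0,s)\le \cc(x_0,s)=\cc_s(A_s).
\end{equation*}

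Summing over all $s>0$ yields $\cc\seq{C_s}\le \cc\seq{A_s}<\infty$, so $C\models\cc$ via the natural enumeration of the change set. The only potential subtlety is the inequality $\la x,i\ra\ge x$, which is a routine property of the pairing function and is used in the same manner as in \cite[1.4.2]{Nies:book}; there is no injury or combinatorial obstacle, since the key point is simply that every element entering $C$ at stage $s$ has first coordinate at least $x_0$, so by monotonicity its cost is no larger than that of the single $A$-change being charged at that stage.
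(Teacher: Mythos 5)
Your proof is correct and follows essentially the same route as the paper: both arguments observe that the position of the least $C$-change at stage $s$ is at least the position of the least $A$-change at that stage (via $\la x,i\ra\ge x$), and then invoke monotonicity in the first component to bound $\cc_s(C_s)\le\cc_s(A_s)$ stage by stage. The only cosmetic difference is that the paper phrases the key observation as the implication $C_{s-1}(x)\ne C_s(x)\Rightarrow A_{s-1}\uhr x\ne A_s\uhr x$, while you compare the two least-change positions directly.
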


	\begin{proof}   Since $x < \la x,i\ra $ for each $x,i$, we have 
	\bc $C_{s-1}(x)
	\neq C_s(x) \ria     A_{s-1}\uhr x \neq  A_s \uhr x$ \ec for each $x,s$.
	 Then, since $ \cc(x,s)$ is non-increasing in~$x$, we have $\cc \seq{C_s}  \le \cc  {\seq{A_s}}  < \infty$. \end{proof}

This yields a limitation on the expressiveness of cost functions. Recall that $A$ is superlow if $A' \ltt \Halt$.
	\begin{cor}  \label{ex:superlow cost} There is no cost function $\cc$ monotonic in the first component  such that $A \models \cc$ iff $A$ is superlow.   \end{cor}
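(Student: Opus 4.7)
The plan is to apply Proposition~\ref{prop:c.e. ub cf} to derive that every superlow set would have to be truth-table reducible to some c.e.\ superlow set, and then to contradict this by exhibiting a superlow set with no such upper bound.

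Suppose for contradiction that $\cc$ is a cost function non-increasing in its first argument with $A \models \cc$ if and only if $A$ is superlow; we may assume $\cc$ is proper, since a non-proper such $\cc$ is obeyed by every $\DII$ set, contradicting the existence of $\DII$ sets that are not superlow. Take any superlow $A$. Since $A' \ltt \ES'$, we have $A \ltt A' \ltt \ES'$, so $A$ is $\omega$-c.e. By hypothesis $A \models \cc$ via some computable approximation $\seq{A_s}$. Let $C$ be the change set of this approximation. By Proposition~\ref{prop:c.e. ub cf}, $C \models \cc$, so $C$ is c.e.\ and superlow; and since $A$ is $\omega$-c.e.\ via $\seq{A_s}$, we have $C \gtt A$. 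Thus every superlow set is truth-table below some c.e.\ superlow set.

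It remains to refute this conclusion by producing a superlow set $A^*$ that is not Turing-below any c.e.\ superlow set. I would build $A^*$ via a $\ES''$-oracle priority construction of a $\DII$ set meeting, for each pair $(i,e)$, the requirement ``$W_e$ is superlow $\RA A^* \neq \Phi_i^{W_e}$'', while in parallel maintaining a computable trace certifying $A^{*\prime} \ltt \ES'$. The predicate ``$W_e$ is superlow'' is $\SI{3}$, so $\ES''$ can effectively guide the diagonalization. The main obstacle is the interaction between these two families of requirements: the diagonalizations naturally consult $\ES''$, whereas the tt-tracing of $A^{*\prime}$ must proceed with only $\ES'$-level data and tolerate only a computable number of revisions per position. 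Standard permitting techniques for constructing superlow $\DII$ sets should suffice for the tracing half, but weaving them together with the $\ES''$-guided diagonalizations is the delicate part of the argument; alternatively, one can invoke the known structural fact that the Turing downward closure of the c.e.\ superlow sets is a proper subclass of the superlow $\DII$ sets, which supplies such an $A^*$ directly and closes the proof.
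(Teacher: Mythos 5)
Your first half is exactly the paper's: apply Proposition~\ref{prop:c.e. ub cf} to the change set of an approximation witnessing $A \models \cc$ to conclude that every superlow set would be (Turing) below a c.e.\ superlow set. (One small slip: you claim $C \gtt A$, but the change set yields $C \gtt A$ only when the approximation used is itself an $\omega$-c.e.\ approximation of $A$; the approximation obeying $\cc$ need not be that one. Fortunately you only need $C \geT A$, which always holds.)

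The second half is where the gap is. You propose a $\ES''$-guided priority construction of a superlow $\DII$ set not Turing below any c.e.\ superlow set, but you do not carry it out, and you explicitly flag the interaction between the $\SI{3}$-guided diagonalization and the tt-tracing of $A^{*\prime}$ as ``the delicate part''. Your fallback — ``invoke the known structural fact'' — is exactly the thing that needs an argument, so as written the proof is not closed. The paper sidesteps all of this with a two-line observation: take $A$ to be a \emph{superlow ML-random} set (such a set exists by the superlow basis theorem applied to a nonempty $\PPI$ class of ML-randoms). Being ML-random, $A$ has diagonally non-computable degree, and by Arslanov's completeness criterion any c.e.\ set $C \geT A$ is Turing complete — in particular not superlow. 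This immediately contradicts the conclusion of your first paragraph without any new construction. You should replace your sketched priority argument with this observation (or with any other concrete witness you can actually justify); as it stands, the core of the refutation is missing.
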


	\begin{proof} Otherwise,  for  each superlow set~$A$ there is a c.e.\ superlow set $C\geT A$.
	  This is clearly not the case: for instance~$A$ could be ML-random, and hence of diagonally non-computable degree, so that any c.e.\ set $C \ge_T A$ is Turing complete. \end{proof}

	 For $X \sub \NN$ let  $2X$ denote $\{2x\colon \, x \in X\}$. Recall that $A \oplus B = 2A \cup (2B +1)$.  
	 We now show that the class of sets obeying $\cc$ is   closed under $\oplus$ and  closed downward under a restricted form of   weak truth-table  reducibility.

	Clearly, $E \models \cc \lland F \models \cc$ implies $E \cup F \models \cc$. 

	  \begin{prop} Let the   cost function $\cc$ be monotonic in the first component. Then  $A \models \cc \lland B \models \cc$ implies $A \oplus B \models \cc$. \end{prop}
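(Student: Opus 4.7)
The plan is the direct construction: given computable approximations $\seq{A_s}$ and $\seq{B_s}$ with $\seq{A_s}\models\cc$ and $\seq{B_s}\models\cc$, let $C_s = 2A_s \cup (2B_s+1)$. Then $\seq{C_s}$ is a computable approximation of $A\oplus B$, and I will show that $\cc\seq{C_s} \le \cc\seq{A_s}+\cc\seq{B_s}$, whence $A\oplus B \models \cc$.

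The key step is a stagewise comparison $\cc_s(C_s) \le \cc_s(A_s)+\cc_s(B_s)$. Fix $s>0$. If $C_{s-1}=C_s$ there is nothing to show. Otherwise let $z$ be the least position where $C_s$ differs from $C_{s-1}$. By definition of $\oplus$, either $z=2x$ where $x$ is the least position of change in $A$ at stage $s$, or $z=2y+1$ where $y$ is the least position of change in $B$ at stage $s$ (and if both $A$ and $B$ change, then $z=\min(2x,2y+1)$). In the first case, monotonicity of $\cc$ in the main argument gives $\cc_s(C_s)=\cc(2x,s)\le\cc(x,s)=\cc_s(A_s)$; in the second case, similarly $\cc_s(C_s)=\cc(2y+1,s)\le\cc(y,s)=\cc_s(B_s)$. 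In either case $\cc_s(C_s)\le \cc_s(A_s)+\cc_s(B_s)$.

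Summing over $s>0$, we obtain
\begin{equation*}
\cc\seq{C_s} \;=\; \sum_{s>0}\cc_s(C_s) \;\le\; \sum_{s>0}\cc_s(A_s)+\sum_{s>0}\cc_s(B_s) \;=\; \cc\seq{A_s}+\cc\seq{B_s} \;<\; \infty.
\end{equation*}
Hence the chosen computable approximation of $A\oplus B$ obeys $\cc$, which is the required conclusion.

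There is no real obstacle here: the only ingredient used beyond bookkeeping is non\-increasing\-ness of $\cc$ in the first argument, which converts the doubling of indices $x\mapsto 2x$ and $y\mapsto 2y+1$ into a pointwise bound on costs. Monotonicity in the stage is not needed for this argument.
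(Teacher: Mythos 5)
Your proof is correct and takes essentially the same approach as the paper: the only real ingredient is that monotonicity of $\cc$ in the first argument converts the index stretching $x\mapsto 2x$, $y\mapsto 2y+1$ into a pointwise cost bound. The paper organizes this slightly differently — it first shows $2A\models\cc$ and $2B+1\models\cc$ separately, then invokes the remark just before the proposition that obedience is closed under union — whereas you fold both steps into a single stagewise estimate $\cc_s(C_s)\le\cc_s(A_s)+\cc_s(B_s)$; this is a presentational difference only, and your version is in fact a bit more self-contained.
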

	 \begin{proof} Let $\seq{A_s} $ by a computable appoximation of $A$. 
	 By the monotonicity of $\cc$ we have $ \cc {\seq{A_s}}  \ge \cc {(2A_s)} $. Hence $2A\models \cc$. Similarly, $2B +1 \models \cc$. Thus $A\oplus B \models \cc$. \end{proof}

	 Recall that there are superlow c.e.\ sets $A_0,A_1$ such that $A_0 \oplus A_1$ is Turing complete (see \cite[6.1.4]{Nies:book}).  Thus the foregoing result yields a   a stronger form of Cor.\ \ref{ex:superlow cost}:  no cost function characterizes   superlowness within the c.e.\ sets. 
	 
	 \section{Look-ahead arguments} \label{s:look_ahead}
	
This core section of the   paper introduces an important   type of argument. Suppose we want to   construct a computable  approximation of a set $A$ that obeys  a given monotonic cost function. If we can anticipate  that $A(x)$ needs to be changed in the future, we try to   change   it  as early as possible, because earlier changes are   cheaper.  Such an argument  will be called  a  \emph{look-ahead argument}.   (Also see the remark  before  Fact~\ref{fa: enumlate}.) The main application of this method is to characterize logical properties of cost functions algebraically. 
	
\subsection{Downward closure under  $\le_{ibT}$}
Recall that $A \le_{ibT} B$ if $A\lwtt B$ with use function bounded by the identity. We now show that the class of models of $\cc$ is downward closed under $\le_{ibT}$.

	     \begin{prop} \label{prop:downward_closure} Let $\cc$ be a monotonic cost function. Suppose that $B\models \cc$ and $A = \Gamma^B$   via a Turing reduction $\Gamma$ such that  each   oracle query on an  input $x$ is at most $x$.    Then $A \models \cc$. \end{prop}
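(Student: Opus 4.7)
The plan is to construct a computable approximation $\seq{A_s}$ of $A$ from the given approximation $\seq{B_s}$ of $B$ obeying $\cc$. The natural candidate is $A_s(x) = \Gamma^{B_s}(x)[s]$ when this computation converges within $s$ steps using only oracle queries at positions $\le x$, and $A_s(x) = 0$ otherwise. Since $B_s \uhr{x+1} \to B \uhr{x+1}$ and $\Gamma^{B \uhr{x+1}}(x) = A(x)$ converges in finite time, $A_s(x) \to A(x)$ for each $x$, so this is indeed a computable approximation of $A$.

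The desired cost bound $\cc\seq{A_s} \le \cc\seq{B_s}$ will follow from the key claim: whenever $A_s(x) \ne A_{s-1}(x)$ with $x$ least, some $y \le x$ satisfies $B_s(y) \ne B_{s-1}(y)$. Granting this, with $y_s$ the least such $y$, monotonicity of $\cc$ gives $\cc(x,s) \le \cc(y_s, s) = \cc_s(B_s)$; summing over $s$ yields $\cc\seq{A_s} \le \cc\seq{B_s} < \infty$, hence $A \models \cc$.

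To verify the claim I will use the use hypothesis: $\Gamma^{B_s}(x)$ depends only on $B_s \uhr{x+1}$. Thus if $B_s \uhr{x+1} = B_{s-1} \uhr{x+1} = \sigma$, the computations at stages $s-1$ and $s$ are the identical $\Gamma^\sigma(x)$, and when $\Gamma^\sigma(x)$ converges within $s-1$ steps the two stages yield the same $A$-value. The delicate case is when $\Gamma^\sigma(x)$ first converges at exactly step $s$ with no oracle change on $\{0,\ldots,x\}$—this is where the look-ahead is applied. I replace the time bound $s$ at stage $s$ by a bound $T(s)$ that increases only when $B_s \neq B_{s-1}$; then consecutive same-oracle stages use the same bound and produce the same approximation, so every $A$-change is forced to coincide with a $B$-change, as required by the claim.

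The hardest part will be to secure both halves of the argument simultaneously under this refinement: the cost accounting may still be delicate at a $B$-change stage where a newly revealed convergence surfaces at some position $x$ strictly below the least $B$-change position, and one must also verify $A_s \to A$ once the time bound is slowed. The first issue can be controlled by using per-position bounds that grow only on $B$-changes at positions $\le x$, and the second reduces to the observation that if $\seq{B_s}$ has only finitely many change stages then $B$ (hence $A$) is computable, so $A \models \cc$ trivially via the constant approximation.
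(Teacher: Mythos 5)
Your instinct to slow down the time bound is a reasonable first reaction to the spurious new-convergence problem, and you have correctly diagnosed that problem, but the specific repair you propose does not close the argument, and the fallback observation you offer for the convergence half is not applicable.

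Concretely: your per-position bound $T_x(s)$, which grows only when $B$ changes at some position $\le x$, does force every change of $A_s(x)$ to be witnessed by a $B$-change at a position $\le x$ — that half is fine. But it destroys the other half, namely that $\seq{A_s}$ is a computable approximation of $A$. Once $B_s\restriction_{x+1}$ has stabilized (which happens for every $x$, at some finite stage), $T_x(s)$ freezes at a finite value $T_x^\infty$. There is no reason $\Gamma^{B\restriction_{x+1}}(x)$ should halt within $T_x^\infty$ steps; if it does not, then $A_s(x)$ is frozen at the default value $0$, while the true $A(x)=\Gamma^{B}(x)$ may be $1$. Your fallback — ``if $\seq{B_s}$ has only finitely many change stages then $B$ is computable'' — does not rescue this, because $T_x$ stalling only requires $B\restriction_{x+1}$ to stabilize, which is always the case; it says nothing about $B$ having finitely many changes globally or being computable. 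So as stated the two fixes pull in opposite directions: the global $T$ preserves convergence but loses the positional claim, and the per-position $T_x$ gives the positional claim but loses convergence.

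The paper's actual look-ahead goes the other way: rather than shrinking the time bound, it jumps forward to stages where convergence is already guaranteed. One defines $s(0)=0$ and $s(i+1)=\mu s>s(i)\,[\Gamma^{B}\restriction_{s(i)}[s]\downarrow]$, then sets, for $s(i)\le x<s(i+1)$ and $k\ge i$, $A_{s(k)}(x)=\Gamma^{B}(x)[s(k+2)]$ (and a constant value for $k<i$). Because $s(k+1)$ and $s(k+2)$ are by definition stages at which $\Gamma^{B}(x)$ has already converged for all $x<s(k)$, a disagreement $A_{s(k)}(x)\ne A_{s(k-1)}(x)$ compares two \emph{convergent} computations with use $\le x$, so it must come from an actual change of $B$ on $[0,x]$ somewhere in $(s(k+1),s(k+2)]$; and since $s(k+2)\to\infty$, convergence $A_s(x)\to A(x)$ is automatic. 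This is the idea your plan is missing: the time parameter must be pinned to \emph{convergence} stages, not merely to $B$-change stages.
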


	 \begin{proof} 

 Suppose $B \models \cc$ via a computable approximation $\seq {B_s} \sN s$.  We define a computable increasing sequence of stages $\seq {s(i)} \sN i$ by  $s(0) = 0$
	 and \bc  $s(i+1) = \mu s > s(i) \, [ \Gamma^B \uhr {s(i)}[s]  \DA ]$. \ec
	 In other words, $s(i+1)$ is the least stage $s$ greater than $s(i)$  such that at stage~$s$, $\Gamma^B(n)$ is defined for each $n < s(i)$. 
	 We will  define $A_{s(k)}(x) $ for each $k\in \NN$. Thereafter  we let $A_s(x) = A_{s(k)}(x)$ where $k$ is maximal such that $s(k) \le s$.

	 Suppose $s(i) \le x < s(i+1)$.  For $k<i$ let  $A_{s(k)}(x)=v$,   where $v =\Gamma^B(x)[s(i+2)]$. For $k \ge i$, let $A_{s(k)}(x)=  \Gamma^B(x)[s(k+2)]$. (Note that these values  are  defined. Taking the $\Gamma^B(x)$ value at the large  stage $s(k+2)$ represents the look-ahead.)  									

	Clearly $\lim_s A_s(x) = A(x)$. We  show that  $\cc {\seq{A_s}}  \le \cc  {\seq{B_t}}$.
	 Suppose that $x$ is least such that $A_{s(k)}(x) \neq A_{s(k)-1}(x) $.   By the use bound on the reduction procedure $\Gamma$, there is   $y \le x$ such that $ B_t(y ) \neq B_{t-1}(y)$ for some~$t$, $ s(k+1)<  t \le  s(k+2)$.  Then  $ \cc(x, s(k))  \le  \cc(y,t) $ by monotonicity of~$\cc$.  Therefore $\seq { A_s} \models \cc$. \end{proof}

\subsection{Conjunction of  cost functions} In the remainder of this section we characterize conjunction and implication of monotonic cost functions algebraically. Firstly, 
we show that a set  $A$ is a model of $\cc$ and $\dd$ if and only if  $A$ is a model of $\cc + \dd$. Then we show that $\cc$ implies $\dd$ if and only if  $\ul \dd = O(\ul \cc)$.
	 \begin{thm}  \label{thm:conj}  Let $\cc,\dd $ be monotonic cost functions.  Then  
	 \bc $A \models \cc \lland A \models \dd \LLR A \models \cc+\dd$. \ec  \end{thm}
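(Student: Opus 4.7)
The direction $(\Leftarrow)$ is immediate. A single computable approximation $\seq{B_s}$ with $(\cc+\dd)\seq{B_s}<\infty$ satisfies $(\cc+\dd)_s(B_s)=\cc_s(B_s)+\dd_s(B_s)$ at every stage $s$, since the least changed position $x$ is the same in all three expressions and $(\cc+\dd)(x,s)=\cc(x,s)+\dd(x,s)$. Summing over $s$ yields $\cc\seq{B_s}+\dd\seq{B_s}<\infty$, so the same approximation $\seq{B_s}$ witnesses both $A\models\cc$ and $A\models\dd$.

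For $(\Rightarrow)$ I would combine computable approximations $\seq{A_s^1}\models\cc$ and $\seq{A_s^2}\models\dd$ of $A$ into a single approximation $\seq{B_s}$ obeying $\cc+\dd$ via a look-ahead argument in the style of Proposition~\ref{prop:downward_closure}. The plan is to fix a computable increasing sequence of stages $\seq{s(k)}\sN k$ that grows fast enough to let both approximations settle further between consecutive terms, and then define $B_{s(k)}(x)$ for $x<s(k)$ using information from $A^1_{s(k+1)}(x)$ and $A^2_{s(k+1)}(x)$, e.g.\ by committing to their common value when they agree (with a deterministic fallback otherwise). Since both original approximations converge to $A$, this gives $\seq{B_s}\to A$. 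A change in $\seq{B_s}$ at position $x$ at stage $s(k)$ is then caused by a change in at least one of $\seq{A^1_t}$, $\seq{A^2_t}$ at some $x'\le x$ during the interval $(s(k-1),s(k+1)]$; by monotonicity (nondecreasing in $s$, nonincreasing in $x$), $\cc(x,s(k))\le\cc(x'_1,t_1)$ and $\dd(x,s(k))\le\dd(x'_2,t_2)$ for the witnessing changes in $\seq{A^1_t}$ and $\seq{A^2_t}$, respectively. Summing over $k$ then bounds the $\cc$-component of $(\cc+\dd)\seq{B_s}$ by $\cc\seq{A^1_s}$ and the $\dd$-component by $\dd\seq{A^2_s}$, up to a finite additive constant.

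The main obstacle is handling $B$-changes caused by only one of the two original approximations: if only $\seq{A^1_t}$ is responsible for a $B$-change at $(x,s(k))$, we still owe a bound on the $\dd$-component of its cost. The key observation is that in this case $\seq{A^2_t}$ must already have matched the post-change value at $x$ by some stage $t_2\le s(k+1)$, so monotonicity of $\dd$ in the stage argument lets us absorb the $\dd$-contribution into an earlier $\dd$-charge incurred by $\seq{A^2_t}$ at $t_2$ (and symmetrically for changes caused only by $\seq{A^2_t}$). Careful selection of the look-ahead sequence $\seq{s(k)}\sN k$ ensures that this accounting is consistent and that each change in either original approximation is charged against at most a bounded number of $B$-changes, so the total cost $(\cc+\dd)\seq{B_s}$ is finite.
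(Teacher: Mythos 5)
The backward direction is fine and matches the paper's (which just states it as trivial). The forward direction is the right high-level strategy — a look-ahead argument built on a computable stage sequence — but there is a genuine gap in how you define $\seq{s(k)}$ and how you account for changes caused by only one of the two original approximations.

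Your fallback argument does not work as stated. Suppose $B$ changes at position $x$ at stage $s(k)$ and only $\seq{A^1_t}$ is responsible, while $\seq{A^2_t}$ already reached the new value at some earlier $A^2$-change at $(y,t_2)$ with $y\le x$ and $t_2 \le s(k)$. You want to charge the $\dd$-component $\dd(x,s(k))$ of the $B$-change against the cost $\dd(y,t_2)$ that $\seq{A^2_t}$ incurred. But the two monotonicity properties pull in opposite directions here: $\dd(x,s(k)) \le \dd(y,s(k))$ (nonincreasing in position) and $\dd(y,t_2) \le \dd(y,s(k))$ (nondecreasing in stage), which gives no comparison between $\dd(x,s(k))$ and $\dd(y,t_2)$. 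In fact $\dd(x,s(k))$ can be strictly larger because $s(k) > t_2$, and this gap can grow without bound as $k$ increases, so the "absorption" does not give finiteness. The look-ahead trick works only when the witnessing change in the original approximation happens \emph{after} the $B$-change (so the cost comparison runs in the favorable direction), and your scheme allows the witnessing $A^2$-change to be in the past.

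The missing idea, which is the heart of the paper's proof, is the particular choice of the stage sequence: one takes $s(i+1) = \mu s > s(i)$ such that $E_s\uhr{s(i)} = F_s\uhr{s(i)}$, where $\seq{E_s}$ and $\seq{F_s}$ are the two given approximations. Defining $A_{s(k)}(x)$ (for $x < s(k)$) via the common value $E_{s(k+1)}(x) = F_{s(k+1)}(x)$ then guarantees that whenever $A$ changes at $x$ at stage $s(k)$, \emph{both} approximations must change at some $y\le x$ in $(s(k),s(k+1)]$ — there is never a "one-sided" change to absorb. The $\cc$- and $\dd$-costs of the $A$-change are then each bounded by the corresponding future change in $\seq{E_s}$ resp.\ $\seq{F_s}$, and these sum to $\cc\seq{E_s} + \dd\seq{F_s}$. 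Your proposal leaves the synchronization of the two approximations unspecified ("grows fast enough to let both approximations settle further"), and it is precisely this agreement condition on initial segments that the argument cannot do without.
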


	 \begin{proof} \lapf This implication is trivial.

	\n  \rapf   We carry out   a   look-ahead argument of the type introduced in the proof of   Proposition~\ref{prop:downward_closure}.  Suppose that  $\seq {E_s}\sN s$ and $\seq {F_s} \sN s$ are computable approximations of  a set $A$ such that $\seq {E_s}  \models \cc$ and $\seq {F_s}  \models \dd$. We may assume that $E_s(x) = F_s(x) =0$ for $s<x$ because changing $E(x)$, say, to $1$ at stage $x$ will  not   increase the   cost   as $\cc(x,s) = 0$ for $x>s$.   We define a computable increasing sequence of stages $\seq {s(i)} \sN i$ by  letting  $s(0) = 0$
	 and \bc  $s(i+1) = \mu s > s(i) \, [ E_s \uhr {s(i)} = F_s \uhr {s(i)} ]$. \ec
	 We define $A_{s(k)}(x) $ for each $k\in \NN$. Thereafter we let $A_s(x) = A_{s(k)}(x)$ where $k$ is maximal such that $s(k) \le s$.

	 Suppose $s(i) \le x < s(i+1)$.  Let  $A_{s(k)}(x)=0$ for $k<i$.   To define $A_{s(k)}(x)$ for $k \ge i$, let $j(x)$ be the least $j \ge i$ such that  $v= E_{s(j+1)}(x)= F_{s(j+1)}(x)$.  

	\bc  $A_{s(k)}(x) =  \begin{cases} v & \text{if} \ i \le k \le j(x) \\
	 									E_{s(k+1)}(x) = F_{s(k+1)}(x) & \text{if} \, k > j(x). \end{cases}$ \ec

	Clearly $\lim_s A_s(x) = A(x)$. To show $(\cc+\dd)  \seq{A_s}  < \infty$,  suppose that $A_{s(k)}(x) \neq A_{s(k)-1}(x) $.  The only  possible cost in  the   case $ i \le k \le j(x) $ 
	is at stage $s(i)$ when $v=1$. Such a cost is bounded by $\tp{-x}$. XX Now consider a cost   in  the   case  $k > j(x)$. There is a least  $y $ such that $ E_t(y ) \neq E_{t-1}(y)$ for some $t$, $ s(k)<  t \le  s(k+1)$. Then $y \le x$, whence $ \cc(x, s(k))  \le  \cc(y,t) $ by the monotonicity of $\cc$. Similarly, using $\seq{F_s}$ one can  bound   the cost of changes due to $\dd$. Therefore 
 $   (\cc+\dd){\seq{A_s}} \le 4+ \cc   \seq {E_s}   +    \dd {\seq {F_s}} < \infty$.
	 \end{proof}
%


 \subsection{Implication  between  cost functions}

\begin{definition} \label{def:cf_implication} {\rm      For cost functions  $\cc$ and $\dd $,   we  write $\cc \ria \dd$ if $A \models \cc$ implies $A \models \dd$ for each  ($\DII$) set $A$.} \end{definition}

  If a  cost function $\mathbf \cc$  is monotonic in the stage component,  then  $\ul \cc(x) = \sup_s  \cc(x,s)$. By Remark~\ref{rmk:all finite}   we  may  assume   $\ul \cc(x)$ is finite for each $x$.
      We will  show $\cc \ria \dd$  is equivalent to $ \ul \dd(x)  = O( \ul \cc(x))$.  In particular, whether or not $A \models \cc$ only depends on the limit function $ \ul \cc$.

  \begin{thm} \label{thm:imply} Let $\cc,\dd$ be  cost functions that are  monotonic in the stage component. Suppose $\cc$ satisfies the limit condition in  Definition~\ref{df:lc}.  Then 

\bc  $\cc \ria  \dd  \LLR  \exo N \fao x \big [ N  \cc(x) >   \dd(x) \big ]$. \ec  \end{thm}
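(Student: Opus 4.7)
This is a two-direction equivalence. For $(\LA)$, assume $N\ul\cc(y)>\ul\dd(y)$ for all $y$, and let $\seq{A_s}\models\cc$.  My plan is to construct a new computable approximation $\seq{\tilde A_s}$ of the same set that obeys $\dd$, via a look-ahead argument in the spirit of Proposition~\ref{prop:downward_closure}.  The key observation is that since both cost functions are stage-monotonic with $\ul\dd(y)/\ul\cc(y)<N$ for each $y$, the pointwise ratio $\dd(y,s)/\cc(y,s)$ converges to a limit strictly less than $N$ as $s\to\infty$; hence for any finite set of positions, the inequality $\dd(y,s)\le N\cc(y,s)$ holds uniformly for all sufficiently large $s$.

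I define computable landmark stages $s(0)=0<s(1)<\cdots$ by letting $s(k+1)$ be the least $s>s(k)$ with $\dd(y,s)\le N\cc(y,s)$ for every $y\le s(k)$; such $s$ exists by the previous paragraph applied to the finite set $\{y : y\le s(k)\}$.  For each position $y$, let $i(y)$ denote the least $i$ with $y\le s(i)$, and set $\tilde A_s(y)=A_{s(\max(k,i(y))+1)}(y)$ where $k$ is the largest index with $s(k)\le s$.  This is exactly the initialization trick used in Proposition~\ref{prop:downward_closure}, and it ensures no spurious ``activation'' change occurs when a position first enters the look-ahead window.  A least-position change of $\tilde A$ at stage $s(k+1)$ thus happens only at some $y\le s(k)$, and forces a change of $A$ at some $y'\le y$ at some stage $t\in (s(k+1),s(k+2)]$.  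By monotonicity and the landmark condition,
\begin{equation*}
\dd(y,s(k+1))\le N\cc(y,s(k+1))\le N\cc(y,t)\le N\cc(y',t).
\end{equation*}
Since the intervals $(s(k+1),s(k+2)]$ are pairwise disjoint in $k$, summing yields $\dd\seq{\tilde A_s}\le N\,\cc\seq{A_s}<\infty$.  The main technical point is the existence of the landmarks $s(k+1)$, and I expect this to be the cleanest part once the right landmark condition is in place.

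For $(\RA)$, I argue the contrapositive: suppose that for every $N$ there is an $x$ with $N\ul\cc(x)\le\ul\dd(x)$; the plan is to build a $\DII$ set $A$ with $A\models\cc$ but $A\not\models\dd$.  Using the limit condition on $\cc$ together with the unboundedness of the ratio, select a computable sequence $x_0<x_1<\cdots$ with $\ul\cc(x_n)\le 2^{-n}$ but $\ul\dd(x_n)\ge 2^n\,\ul\cc(x_n)$ (so $\ul\dd(x_n)\ge 1$), and stages $s_n>x_n$ with $\dd(x_n,s_n)\ge\ul\dd(x_n)/2\ge 1/2$.  I then adapt the simplicity construction of Theorem~\ref{thm:cfconstr} to produce a c.e.\ set $A\models\cc$ in which each $x_n$ is enumerated at stage $s_n$; the total $\cc$-cost is bounded by $\sum_n 2^{-n}+O(1)$.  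The principal obstacle here — and the harder half of the proof — is to arrange that \emph{every} computable approximation of $A$ (not merely the constructed one) pays at least $\dd(x_n,s_n)\ge 1/2$ at each $n$, so that $\dd\seq{\cdot}=\infty$ for each.  I expect this to be done by spacing the $x_n$ far enough apart to force $x_n$ to be the least new element in its enumeration stage, combined with a diagonalization against potential alternative enumerations of $A$ with small $\dd$-cost.
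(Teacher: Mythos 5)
Your proof of the direction $(\LA)$ is essentially the paper's: a look-ahead construction with computable landmark stages $s(0) < s(1) < \cdots$ chosen so that $\dd(y,s(k+1))\le N\cc(y,s(k+1))$ for $y\le s(k)$, and the "initialization trick" to suppress spurious changes when a position first enters the window. Up to indexing, this matches the paper's argument. (One minor point: the side remark that "the ratio $\dd(y,s)/\cc(y,s)$ converges" is not needed and slightly shaky; what you actually use, correctly, is that $\dd(y,s)\le\ul\dd(y)<N\ul\cc(y)$ and $\cc(y,s)\nearrow\ul\cc(y)$, which gives the landmark inequality for each fixed $y$ at all large enough $s$.)

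The direction $(\RA)$, however, is where your plan breaks down, and the failure is not a technicality. You propose to build a \emph{c.e.}\ set $A$ (via a variant of the simplicity construction) that obeys $\cc$ but not $\dd$. But the paper explicitly observes, immediately after this theorem, that the implication $\cc\ria\dd$ restricted to c.e.\ sets does \emph{not} imply $\ul\dd=O(\ul\cc)$: with $\cc(x,s)=4^{-x}$ and $\dd(x,s)=2^{-x}$ every c.e.\ set obeys $\dd$, so there is simply no c.e.\ witness to the failure of $\cc\ria\dd$ in such cases. Concretely, your ``principal obstacle''--- forcing \emph{every} computable approximation of a c.e.\ $A$ to pay a fixed amount at each $x_n$--- is insurmountable: a competing approximation $\Phi_e$ can set $\Phi_e(x_n)=1$ from the very first stage at which it commits to $x_n\in A$, paying only $\dd(x_n,x_n)=0$ rather than $\dd(x_n,s_n)$. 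No amount of spacing the $x_n$ apart defeats this, because a c.e.\ set changes each position at most once and so gives the adversary nothing further to react to. This is exactly why the paper must construct $A$ as a properly $\DII$ set: requirement $R_e$ repeatedly flips $A(x)$ back and forth at stages where $N\cc(x,s)<\dd(x,s)$, waiting for $\Phi_e$-expansionary recovery between flips. Each flip costs $R_e$ only a small $\cc$-amount, but forces any total $\Phi_e$ tracking $A$ to pay a $\dd$-amount at least $2^{b+e}$ times as large; after finitely many flips the $\dd$-cost of $\Phi_e$ exceeds $1$, and a priority/initialization scheme keeps the total $\cc$-cost of $A$ bounded. Two further gaps in your sketch: the sequence $x_0<x_1<\cdots$ you want is chosen from conditions on $\ul\cc$ and $\ul\dd$, which are only $\Delta^0_2$, so it cannot in general be picked computably in advance (in the paper this selection happens dynamically inside $R_e$, with $x$ re-chosen as approximations settle); and the inference ``$\ul\dd(x_n)\ge 2^n\ul\cc(x_n)$, so $\ul\dd(x_n)\ge 1$'' is a non sequitur, since $\ul\cc(x_n)$ may be far smaller than $2^{-n}$.
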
 
  \begin{proof} \lapf 
  We carry out yet another   look-ahead argument. We define a computable increasing sequence of stages $s(0) < s(1) < \ldots$ by  $s(0) = 0$
 and \bc  $s(i+1) = \mu s > s(i) . \fao x < s(i) \, \big [N  \cc(x,s) >   \dd(x,s) \big ]$. \ec
 Suppose $A$ is a $\DII$ set with a computable approximation  $\seq{A_s} \models \cc$.  We show that $\seq {\wt A_t} \models \dd$ for some computable approximation $\seq { \wt A_t}$ of $A$. As usual, we define $\wt A_{s(k)}(x) $ for each $k\in \NN$. We then  let $\wt A_s(x) = \wt A_{s(k)}(x)$ where $k$ is maximal such that $s(k) \le s$.
  
  Suppose $s(i) \le x < s(i+1)$.  If  $k <  i+1$ let   $\wt A_{s(k)}(x)=A_{s(i+2)}(x)$. If $k\ge  i+1$ let    $\wt A_{s(k)}(x)=A_{s(k+1)}(x)$.

 Given $k$,  suppose   that $x$ is least such that $\wt A_{s(k)}(x) \neq \wt A_{s(k)-1}(x)$. Let $i$ be the number such that $s(i) \le x  < s(i+1)$. Then $k \ge i+1$. We have $A_t(x) \neq A_{t-1}(x)$ for some $t$ such that $s(k) < t \le s(k+1)$. Since $x< s(i+1) \le s(k)$, by the monotonicity hypothesis this implies $N \cc(x,t) \ge N  \cc(x,s(k) )>  \dd(x, s(k))$.  So  $\dd  {\seq{\wt A_s}}   \le  N \cdot  \cc  {\seq{A_s}}  < \infty$.   Hence $A \models \dd$.
  
  \vsp

\n \rapf Recall from the proof of Fact~\ref{ex:wtt index cost} that we   view the $e$-th  partial computable function $\Phi_e$ as a (possibly partial) computable approximation   $\seq{B_t}$, where  $B_t \simeq D_{\Phi_e(t)}$.  

Suppose that $ \exo N \fao x  \, \big [   N \ul  \cc(x) >  \ul  \dd(x) \big ]$ fails. We build  a set $A \models \cc$ such that for no  computable approximation $\Phi_e$ of $A$  we have $\dd \, \Phi_e  \le 1$. This suffices for the theorem by  Fact~\ref{easy1}.
  We meet the requirements 

\bc $R_e\colon \, \Phi_e \ttext{is total and approximates} A  \RRA  \Phi_e \not \models \dd.$ \ec
  
  The idea is to change $A(x)$   for some fixed $x$ at sufficiently many stages $s$ with $N  \cc(x,s) <  \dd(x,s)$, where $N$ is   an appropriate   large constant. After each change we wait for recovery from the side of $\Phi_e$. In this way our $\cc$-cost of changes to  $A$  remains bounded,  while  the opponent's  $\dd$-cost of changes to $\Phi_e$ exceeds~$1$.

For a stage $s$, we let    $ \init_s(e) \le s$ be the  largest stage such that $R_e$ has been initialized   at that stage (or $0$ if there is no such stage).   Waiting for recovery is implemented as follows. We say that $s$ is \emph{$e$-expansionary} if $s=\init_s(e)$, or $s> \init_s(e) $ and,   where $u$ is the greatest $e$-expansionary stage less than $s$, 
\bc  $\ex t \in [u,s) \, [ \Phi_{e,s}(t) \DA \lland \Phi_{e,s}(t) \uhr u = A_u \uhr u]$. \ec
  The strategy for $R_e$  can only change $A(x)$  at  an   $e$-expansionary stage $u$ such that  $x< u$. In this case it preserves $A_u \uhr u$ until  the next $e$-expansionary stage. Then,  $\Phi_e $ also  has to    change its mind on~$x$: we have \bc  $x\in \Phi_e(u-1) \leftrightarrow  x \not \in \Phi_e(t)$ for some $t \in [u,s)$.  \ec

  We measure the progress of $R_e$ at stages $s$ via a quantity $ \aaa_s(e)$. When $R_e$ is initialized at stage $s$, we set $ \aaa_s(e)$ to~$0$. 
If  $R_e$   changes $A(x)$    at stage $s$, we increase $ \aaa_s(e)$ by $\cc(x,s)$. $R_e$ is declared satisfied when   $ \aaa_s(e)$ exceeds  $\tp{-b-e}$, where $b$ is the number of times $R_e$ has been initialized.

\vsp

\n {\it Construction of $\seq{A_s}$ and $\seq { \aaa_s}$.}
Let $A_0= \ES$. Let $\aaa_{0}(e)=0$ for each $e$. 

\n {\it Stage $s> 0$.} Let $e$ be least such that $s$ is $e$-expansionary and  $\aaa_{ s-1}(e) \le  \tp{-b-e}$ where $b$ is the number of times $R_e$ has been initialized so far. If $e$ exists do the following. 

Let $x$ be least such that  $\init_s(e) \le x < s $, $ \cc(x,s) < \tp{-b-e}$  and 
\bc $\tp{b+e}  \cc(x,s) <  \dd(x,s)$. \ec
If $x$ exists let $A_s(x)= 1-A_{s-1}(x)$. Also let $A_s(y) = 0$ for $x< y < s$.  Let $\aaa_s(e)= \aaa_{s-1}(e)+  \cc(x,s)$. Initialize the requirements $R_i$ for $i>e$ and let  $\aaa_s(i)=0$. (This preserves $A_s\uhr s$ unless $R_e$ itself is later initialized.) We say that $R_e$ \emph{acts}. 

\vsps

\n \verif  If $s$ is a stage such that $R_e$ has been initialized for~$b$ times,  then  $\aaa_s(e) \le \tp{-b-e+1}$. Hence the total cost of changes of $A$ due to  $R_e$ is at most  $\sum_b \tp{-b-e+1}= \tp{-e+2}$. Therefore $\seq{A_s} \models \cc$. 

We show that {\it each $R_e$ only acts finitely often,  and is met.} Inductively, $\init_s(e)$ assumes  a final value $s_0$. Let $b$ be the number of times $R_e$ has been initialized by stage  $s_0$.  

Since  the  condition $ \exo N \fao x [ N  \ul \cc(x) >  \ul \dd(x)]$ fails, there is  $x \ge s_0$ such that   for some $s_1 \ge x$, we  have $\fa s \ge s_1 \, [\tp{b+e}  \cc(x,s) <  \dd(x,s)]$. Furthermore, since $\cc$ satisfies the limit condition, we may suppose that   $ \ul \cc(x) < \tp{-b-e}$.  Choose $x$ least. 

If~$\Phi_e$ is a computable approximation of $A$,    there are infinitely many $e$-expansionary stages $s \ge s_1$. For each such $s$,  we can choose this     $ x$ at stage $s$ in the construction. So we can add at least $ \cc(x,s_1)$ to $\aaa(e)$. Therefore   $\aaa_t(e)$ exceeds the bound $\tp{-b-e}$ for   some stage $t\ge s_1$, whence   $R_e$ stops acting at $t$. Furthermore, since $\dd$ is monotonic  in the second component and by the initialization due to $R_e$,  between  stages $s_0$ and  $t$ we have caused  $\dd \, \Phi_e $ to increase by  at least $\tp{b+e} \aaa_t(e) >  1$. Hence $R_e$ is met. 
  \end{proof} 
  
  The foregoing proof uses in an essential way the ability to change $A(x)$, for the same $x$, for a multiple number of  times.   If we restrict implication to  c.e.\ sets, the   implication from left to right in Theorem~\ref{thm:imply} fails. For a trivial example,   let $ \cc(x,s)= 4^{-x}$ and $ \dd(x,s) = \tp{-x}$. Then each c.e.\ set obeys $\dd$, so $\cc \to \dd$ for c.e.\ sets. However, we do not have $ \dd(x)= O( \cc(x))$. 

We mention  that Melnikov and Nies (unpublished, 2010) have obtained a sufficient  algebraic condition  for the  non-implication  of cost functions via a  c.e.\ set. Informally speaking, the condition $ \dd(x)= O( \cc(x))$ fails ``badly''.
  \begin{proposition}
	Let $\cc$ and $\dd$  be monotonic cost functions satisfying the limit condition such that
  $\sum_{x \in \NN}  \ul \dd(x) = \infty$ and, for each $N>0$, 
\[ \sum  \ul \dd (x) \Cyl{ N\ul \cc (x)> \ul \dd (x)} < \infty. \]
	Then there exists a c.e. set $A$   that   obeys $\cc$, but not  $\dd$. 
	\end{proposition}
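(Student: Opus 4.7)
We build $A$ as a c.e.\ set via a finite-injury priority construction. The central device is to enumerate carefully chosen finite sets into $A$ one element at a time, using $e$-expansionary stages to prevent any candidate enumeration $\Phi_e$ from batching its enumeration of $A$, thereby forcing it to incur $\dd$-cost on every element.

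\emph{Reformulation of the hypothesis.} Set $T_N := \{x : \ul\dd(x) \ge N \ul\cc(x)\}$. The two hypotheses give $\sum_{x \in T_N} \ul\dd(x) = \infty$ for every $N$. Combined with the limit condition on $\dd$, this yields the \emph{key lemma}: for every $\epsilon, M > 0$ and threshold $x_0 \in \NN$, one can effectively find a finite $F \subseteq [x_0, \infty)$ and a stage $s^{\ast}$ with $\sum_{x \in F} \ul\cc(x) < \epsilon$ and $\sum_{x \in F} \dd(x, s^{\ast}) > M$. Indeed, choosing $N > (M+1)/\epsilon$, one picks a finite $F \subseteq T_N \cap [x_0, \infty)$ with $M < \sum_F \ul\dd \le M+1$ (using $\ul\dd(x) \to 0$ from the limit condition); then $\sum_F \ul\cc \le (M+1)/N < \epsilon$, and the stage $s^{\ast}$ is found by monotone search in the $\dd$-values.

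\emph{Construction.} For each $e$ we meet the requirement $R_e$: if $\Phi_e$ is a computable enumeration of $A$, then $\dd\seq{\Phi_e} = \infty$; by Fact~\ref{fact:fukd} this gives $A \not\models \dd$. Split $R_e$ into subrequirements $R_{e,j}$ ($j \in \NN$), each tasked to contribute more than $1$ to $\dd\seq{\Phi_e}$ at $\cc$-cost at most $\tp{-e-j-1}$. To execute $R_{e,j}$: invoke the key lemma with $\epsilon = \tp{-e-j-1}$, $M = 2$, and freshness threshold above every previously enumerated element, obtaining $F_{e,j}$ and a witnessing stage $s^{\ast}$. Then enumerate the elements of $F_{e,j}$ into $A$ in increasing order, one per $e$-expansionary stage (i.e.\ a stage $s$ at which $\Phi_{e,s}$ agrees with $A_s$ on an initial segment strictly longer than at any prior $e$-expansionary stage). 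Waiting for expansionary stages is the essential device: between two successive $R_{e,j}$-enumerations, $\Phi_e$ must acknowledge the previously enumerated element (now within the newly-extended agreement length), so $\Phi_e$ is compelled to place the elements of $F_{e,j}$ at distinct stages and cannot batch them. By monotonicity of $\dd$, the contribution of $R_{e,j}$ to $\dd\seq{\Phi_e}$ is thus at least $\sum_{x \in F_{e,j}} \dd(x, s^{\ast}) > 2$.

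\emph{Priority and verification.} Arrange the $R_{e,j}$ in priority order via Cantor pairing; at each stage, eligible subrequirements act in priority order, with freshness ensuring non-interference between requirements. A standard finite-injury analysis shows each $R_e$ is initialized only finitely often, so its total $\cc$-cost stays bounded by $O(\tp{-e})$; summing over $e$ yields $\cc\seq{A_s} < \infty$, hence $A \models \cc$. If $\Phi_e$ is a total enumeration of $A$ then infinitely many $e$-expansionary stages occur, every $R_{e,j}$ eventually completes, and $\dd\seq{\Phi_e} \ge \sum_j 2 = \infty$. Otherwise $\Phi_e$ eventually disagrees with $A$ permanently at some bounded level, only finitely many $e$-expansionary stages occur, and $R_e$ is vacuously met. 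The main obstacle is the priority bookkeeping: one must verify that the waits between enumerations do not deadlock the construction (they do not, since the persistent absence of expansionary stages certifies $\Phi_e$ is not an enumeration of $A$) and that injury stays finite; both points follow from the standard finite-injury template.
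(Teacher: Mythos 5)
The paper states this proposition without proof (it is attributed to unpublished work of Melnikov and Nies, 2010), so there is no in-paper argument to compare against; I assess your sketch directly. Your overall plan --- diagonalize against each candidate computable enumeration $\Phi_e$ of $A$, using $e$-expansionary stages to force $\Phi_e$ to enumerate a chosen finite set $F_{e,j}$ one element at a time so that it must incur $\dd$-cost on each element separately --- is the natural idea, and the accounting that the forced contribution is at least $\sum_{x\in F_{e,j}}\dd(x,s^{\ast})$, by monotonicity of $\dd$ in both arguments, is correct.

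The gap is in the effectivity of your ``key lemma.'' You need the construction to \emph{compute} a finite $F$ with $\sum_{x\in F}\ul\cc(x)<\epsilon$. For a monotonic cost function, $\ul\cc(x)=\sup_s\cc(x,s)$ is not computable (only approximable from below), so the condition $\sum_{x\in F}\ul\cc(x)<\epsilon$ is $\Pi^0_1$ and cannot be verified in finite time; the set $T_N$ is itself not computable. Your argument shows that such $F$ \emph{exist}, not that one can be produced by the computable construction. Nor does the weaker, computable condition $\sum_{x\in F}\cc(x,s^\ast)<\epsilon$ suffice: you enumerate the elements of $F$ at later $e$-expansionary stages $s_x\ge s^\ast$, which you do not control, so the actual $\cc$-cost is $\sum_{x\in F}\cc(x,s_x)$, and bounding it requires $\cc(x,s_x)\le\ul\cc(x)$. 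A wrong choice of $F$ --- one whose $\cc$-values still rise substantially after $s^\ast$ --- could exceed the budget. Fixing this needs an extra layer: a scheme for guessing $F$ and revising when the observed $\cc$-cost threatens the budget, with a tree-of-strategies or careful priority argument showing the revisions stabilize and the total wasted cost remains bounded. Your sketch asserts but does not supply this, and it is exactly where the difficulty of the result lies.
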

	 The  hope is  that some variant of this will yield  an algebraic criterion for   cost function implication   restricted   to the  c.e.\ sets.

\section{Additive cost functions}  \label{s:additive cf}
 We discuss  a class of very simple cost functions introduced  in \cite{Nies:ICM}. We  show that a  $\DII$ set obeys all of them if and only if it is~$K$-trivial.   There   is a universal cost function of this kind, namely $ \cc(x,s) = \Om_s - \Om_x$.   Recall Convention~\ref{rmk:all finite} that $\ul \cc(x)< \infty$ for each cost function $\cc$. 
 \begin{deff}[\cite{Nies:ICM}] We say that a cost function $\cc$ is \emph{additive} if  $\cc  (x,s) =0$ for $x >s $, and   for each $x< y<z$ we have 
 \bc  $ \cc(x,y) +  \cc(y,z)=  \cc(x,z)$. \ec \end{deff}
 Additive cost functions correspond  to nondecreasing effective sequences $\seq {\beta_s}\sN s$ of non-negative rationals, that is, to effective approximations of left-c.e.\ reals $\beta$. Given  such an approximation  $\langle \beta \rangle = \seq {\beta_s}\sN s$, let for $x \le s$
 \bc $\cc_{\langle \beta \rangle} (x,s) = \beta_s - \beta_x$. \ec 
 Conversely, given an additive cost function $\cc$, let $\beta_s=  \cc(0,s)$. Clearly the two effective  transformations are   inverses of each other.

\subsection{$K$-triviality and the   cost function $\cc_{\seq \Om}$}
The standard cost function $\cost$  introduced in (\ref{eqn:stcf})  is \emph{not} additive. We certainly  have $\cost(x,y) + \cost(y,z)\le  \cost(x,z)$, but by stage $z$ there could be a shorter description of, say, $x+1$ than at stage $y$, so that the inequality may be proper. On the other hand, let $g$ be a computable function such that $\sum_w \tp{-g(w)} < \infty$; this implies that $K(x) \lep g(x)$. The ``analog'' of $\cost $ when we write  $g(x) $ instead of $K_s(x)$, namely  $\cc_g(x,s)= \sum_{w=x+1}^s \tp{-g(w)}$ is an additive cost function. 

 Also, $\cost$  is dominated by an additive cost function $\cc_{\langle \Om \rangle}$ we introduce next. Let $\UM$ be the standard universal prefix-free machine (see e.g.\ \cite[Ch.\ 2]{Nies:book}). Let  $\langle \Om \rangle$ denote   the computable approximation of $\Om$  given by $\Om_s  = \leb \, \dom (\UM_s)$. (That is, $\Om_s$ is the Lebesgue measure of the domain of the universal \PF   machine at stage~$s$.)
 
 \begin{fact} \label{fa:Omfact} For each $x\le  s$, we have $\cost(x,s) \le \cc_{\seq \Om}(x,s) = \Om_s - \Om_x$. \end{fact}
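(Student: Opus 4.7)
The plan is to unfold both sides in terms of the enumeration of the universal prefix-free machine $\UM$, matching descriptions that numbers $w$ with $x < w \le s$ acquire by stage $s$ against strings in $\dom(\UM_s) \setminus \dom(\UM_x)$. Since $\UM_s$ is prefix-free, $\Om_s = \leb\,\dom(\UM_s) = \sum_{\sigma \in \dom(\UM_s)} 2^{-|\sigma|}$, and because $\UM$ is built monotonically (once $\UM(\sigma)\DA$, the value is permanent), $\dom(\UM_x) \sub \dom(\UM_s)$. Hence $\Om_s - \Om_x = \sum_{\sigma \in \dom(\UM_s) \setminus \dom(\UM_x)} 2^{-|\sigma|}$.

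The key step is to injectively assign each nontrivial term on the left-hand side a witness in $\dom(\UM_s) \setminus \dom(\UM_x)$. For each $w$ with $x < w \le s$ and $K_s(w) < \infty$, pick $\sigma_w$ with $\UM_s(\sigma_w) = w$ and $|\sigma_w| = K_s(w)$. The stage convention $K_t(y) = \infty$ for $y \ge t$ gives $K_x(w) = \infty$, since $w > x$ forces $w \ge x$. Hence $\sigma_w \notin \dom(\UM_x)$: otherwise monotonicity of the $\UM$-enumeration would give $\UM_x(\sigma_w) = \UM_s(\sigma_w) = w$, whence $K_x(w) \le |\sigma_w| < \infty$, a contradiction. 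The $\sigma_w$ are pairwise distinct because $\UM_s$ is a partial function and the $w$ are distinct.

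Summing,
\[
\cost(x,s) \;=\; \sum_{w = x+1}^{s} 2^{-K_s(w)} \;\le\; \sum_{\sigma \in \dom(\UM_s) \setminus \dom(\UM_x)} 2^{-|\sigma|} \;=\; \Om_s - \Om_x,
\]
where terms with $K_s(w) = \infty$ contribute $0$ and can be discarded. The only point that needs care is the bookkeeping that combines the stage convention on $K$ with the monotonicity of the $\UM$-enumeration to ensure the chosen descriptions are genuinely absent from $\dom(\UM_x)$; no substantive obstacle arises beyond that.
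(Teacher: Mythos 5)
Your proof is correct, and the mathematical content is the same as the paper's: matching each $w \in (x,s]$ with a finite stage-$s$ complexity to a distinct description $\sigma_w \in \dom(\UM_s)\setminus\dom(\UM_x)$, then using prefix-freeness. What differs is the organization. The paper proves the bound by induction on $s$, showing $\cost(x,s+1)-\cost(x,s) \le \Om_{s+1}-\Om_s$ at each step and attributing the increment to newly converged $\UM$-computations; you instead exhibit the injection globally in one pass. Your version is a bit more explicit and self-contained, since the witnessing map is written down rather than implicit in a stagewise accounting. One small caveat applies equally to both proofs: the step that reads off $\sigma_w \notin \dom(\UM_x)$ from the stage convention $K_x(w)=\infty$ for $w\ge x$ is sound only if that convention is understood as a normalization of the universal machine (no outputs $\ge t$ by stage $t$), not merely a stipulated value of $K_t$ detached from $\UM_t$'s behavior. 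This is the standard reading, and the paper's inductive proof relies on the same normalization when it treats terms like $2^{-K_{s+1}(s+1)}$ as zero and charges the remaining increase to fresh elements of $\dom(\UM_{s+1})\setminus\dom(\UM_s)$. You flagged exactly this point at the end, which is the right thing to be careful about.
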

 
 \begin{proof} Fix $x$. We prove the statement  by induction on $s\ge x$. For $s=x$ we have  $\cost(x,s) = 0$. Now \bc  $\cost(x,s+1) - \cost(x,s) =  \sum_{ w =x+1 }^{s+1}  2^{-K_{s+1}(w)}-  \sum_{ w =x+1 }^s 2^{-K_s(w)} \le \Om_{s+1} - \Om_s$,  \ec because the difference    
 is due to convergence at stage $s$ of new  $\UM$-computations. 
 \end{proof} 
 \begin{thm} Let $A$ be $\DII$. Then the following are equivalent. 
 
 \bi \itone   $A$ is $K$-trivial.
 \ittwo   $A$ obeys each additive cost function. 
 \itthree $A$ obeys  $\cc_{\seq \Om}$, where  $\Om_s = \leb \dom ( \UM_s)$.  \ei \end{thm}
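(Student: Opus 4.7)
The equivalence is proved via the cycle (ii) $\Rightarrow$ (iii) $\Rightarrow$ (i) $\Rightarrow$ (ii). The implication (ii) $\Rightarrow$ (iii) is immediate, since $\cc_{\seq\Om}$ is itself an additive cost function: $\Om$ is a left-c.e.\ real with standard approximation $\langle \Om \rangle = \seq{\Om_s}$, and $\cc_{\seq\Om}(x,s) = \Om_s - \Om_x$ fits the form $\cc_{\langle \beta \rangle}$ with $\beta = \Om$.

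For (iii) $\Rightarrow$ (i), suppose $\seq{A_s}$ is a computable approximation of $A$ with $\cc_{\seq\Om}\seq{A_s} < \infty$. By Fact~\ref{fa:Omfact}, $\cost(x,s) \le \cc_{\seq\Om}(x,s)$ pointwise, so the same approximation gives $\cost\seq{A_s} \le \cc_{\seq\Om}\seq{A_s} < \infty$; hence $A \models \cost$. The characterization of $K$-triviality via $\cost$ recalled at the end of Subsection~\ref{ss:standard_costfunction} then yields that $A$ is $K$-trivial.

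The main direction is (i) $\Rightarrow$ (ii). Fix a $K$-trivial set $A$ and an additive cost function $\cc = \cc_{\langle \beta \rangle}$ with $\langle \beta \rangle$ a computable approximation of a left-c.e.\ real $\beta$. My plan is to construct a computable approximation $\seq{A_s}$ of $A$ obeying $\cc$ by running the golden-run / decanter construction of Nies (\cite{Nies:AM}, \cite[Ch.\ 5]{Nies:book}), but with the $\beta$-approximation replacing the $\seq{K_s}$-weights as the currency for scheduling changes. Concretely, whenever the construction wants to change $A$ at position $x$ at stage $s$, the change is only enacted once the increment $\beta_s - \beta_x$ is dominated by a suitable multiple of the weight of newly enumerated $K$-triviality evidence (i.e., requests of a bounded request set for $A$ at positions $\ge x$). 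Since the total weight of any bounded request set witnessing $K$-triviality of $A$ is finite, and by the additivity of $\cc$ the $\beta$-charges along any increasing chain of change positions telescope to a quantity bounded by $\beta$, the total $\cc$-cost of $\seq{A_s}$ stays finite.

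The principal obstacle is verifying that the delay never becomes unbounded, i.e., that every pending change is eventually enacted so that $A_s \to A$. This requires the limit condition $\beta - \beta_x \to 0$ (which holds since $\beta$ is finite and left-c.e.) in combination with careful accounting of when the bounded request set has grown enough to cover a given pending change. The additivity of $\cc$ is essential here, as it yields the telescoping bound that a general monotonic cost function would not admit; this is also what explains why the equivalence (i) $\Leftrightarrow$ (iii) holds at all, despite the limit functions $\ul\cost$ and $\ul{\cc_{\seq\Om}}$ being pointwise comparable only in one direction and Theorem~\ref{thm:imply} therefore not applying directly.
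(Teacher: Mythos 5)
Your handling of (ii)$\Rightarrow$(iii) and (iii)$\Rightarrow$(i) is correct and matches the paper exactly. The problem is with (i)$\Rightarrow$(ii).

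Your sketch for (i)$\Rightarrow$(ii) proposes re-running a golden-run/decanter argument with the $\beta$-approximation as currency, and you yourself flag that the main verification (that the delay stays bounded and $A_s \to A$) is an unresolved obstacle. That is a genuine gap: the heart of the direction is missing, and getting a decanter construction to converge while paying $\cc_{\langle\beta\rangle}$-costs is precisely the hard part that would need a full argument.

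More importantly, the plan rests on a misconception. You assert that $\ul\cost$ and $\ul{\cc_{\seq\Om}}$ are ``pointwise comparable only in one direction'' and that therefore Theorem~\ref{thm:imply} cannot be applied directly. In fact the paper's proof of (i)$\Rightarrow$(ii) is exactly a direct application of Theorem~\ref{thm:imply}, made possible by first proving the reverse comparison you thought was unavailable. For an arbitrary additive $\cc$ (with, after scaling, $\ul\cc(0)\le 1$), write $\ul\cc(x)=\sum_{w>x}\cc(w-1,w)$ by additivity, and let $r_w$ be least with $\tp{-r_w}\le\cc(w-1,w)$. Then $\sum_w\tp{-r_w}\le 1$, so by the Machine Existence Theorem $K(w)\lep r_w$, hence $\tp{-r_w}=O(\tp{-K(w)})$ and $\ul\cc(x)=O\big(\sum_{w>x}\tp{-K(w)}\big)=O(\ul\cost(x))$. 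Now Theorem~\ref{thm:imply} gives $\cost\ria\cc$, and since $A$ is $K$-trivial iff $A\models\cost$, the $K$-trivial $A$ obeys $\cc$. In particular, taking $\cc=\cc_{\seq\Om}$, one gets $\ul\cc_{\seq\Om}=O(\ul\cost)$, and combined with Fact~\ref{fa:Omfact} the comparison is in fact two-sided (indeed the paper records $\cc_{\seq\Om}\leftrightarrow\cost$ immediately after the theorem). The additivity of $\cc$ does play an essential role, but only through the telescoping identity $\ul\cc(x)=\sum_{w>x}\cc(w-1,w)$ that feeds the Machine Existence Theorem, not by circumventing Theorem~\ref{thm:imply}. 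You should replace the decanter sketch with this short argument.
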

 
 \begin{proof} 
 (ii) $\ria$ (iii) is immediate, and (iii) $\ria$ (i) follows from Fact~\ref{fa:Omfact}. It remains to show (i)$\ria$(ii).

 Fix some computable approximation $\seq{A_s}\sN{s}$ of~$A$.  Let $\cc$ be an additive cost function. We may suppose that $  \ul \cc(0) \le 1$. 

For $w>0$ let $r_w \in \NN \cup {\infty} $ be least such that $\tp{-r_w} \le c(w-1,w)$ (where  $\tp{-\infty } =0$).  Then $\sum_w \tp{-r_w} \le 1$. Hence by the Machine Existence Theorem we have $K(w)\lep r_w$ for each $w$. This implies $\tp{-r_w} = O(\tp{-K(w)})$, so $\sum_{w> x} \tp{-r_w} = O(\ul \cost(x))$ and hence $\ul \cc(x) = \sum_{w>x} c(w-1,w) = O(\ul \cost(x))$. Thus $\cost \to \cc$ by Theorem~\ref{thm:imply}, whence the  $K$-trivial set $A$ obeys $\cc$.  (See \cite{Bienvenu.Greenberg.ea:16} for a proof not relying on Theorem~\ref{thm:imply}.)
  \end{proof} 
  
Because of  Theorem~\ref{thm:imply}, we have $\cc_{\seq \Om} \leftrightarrow \cost$. That is,  
  
  \bc $\Om - \Om_x \sim \sum_{w= x+1}^\infty \tp{-K(w)}$. \ec
This can easily  be seen directly: for instance, $\cost  \le \cc_{\seq \Om}$ by Fact~\ref{fa:Omfact}.

\subsection{Solovay reducibility}
  Let $\Qbinary $ denote the dyadic rationals, and let the variable $ q$ range over $\Qbinary$. Recall Solovay reducibility on left-c.e.\ reals: $\beta \le_S \aaa$ iff there is a partial computable $\phi \colon \,  \Qbinary \cap [0,\aaa) \ria \Qbinary \cap [0,\beta) $ and $N \in \NN$ such that  \bc $\fao q < \aaa \big [ \beta - \phi(q) < N(\aaa - q)\big ]$. \ec  Informally, it  is easier to approximate  $\beta$  from the left, than $\aaa$.   See e.g.\ \cite[3.2.8]{Nies:book} for background.

We will show that reverse   implication  of additive cost functions corresponds to Solovay reducibility on the corresponding left-c.e.\ reals. Given a left-c.e.\ real $\gamma$, we let the variable  $\seq \gamma$ range over the  nondecreasing effective sequences  of rationals converging to $\gamma$.

  \begin{prop}  \label{prop:heyheyey} Let $\aaa, \beta$ be left-c.e.\ reals. The following are equivalent.  

   \bi \itone   $\beta \le_S \aaa$

    \ittwo $  \fa \seq  \aaa \ex \seq \beta \, [c_{\seq \aaa} \ria c_{\seq \beta}]$ 
 \itthree $  \ex \seq  \aaa \ex \seq \beta \, [ c_{\seq \aaa} \ria c_{\seq \beta}] $.  \ei 
 \end{prop}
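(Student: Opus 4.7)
My plan is to use Theorem~\ref{thm:imply} as a bridge between cost-function implication and the Solovay inequality. Observe that for any nondecreasing effective approximation $\seq{\alpha}$ of a left-c.e.\ real, the cost function $c_{\seq{\alpha}}(x,s) = \alpha_s - \alpha_x$ is monotonic in the stage (Definition~\ref{def:c.f.-2}) and its limit function $\ul{c_{\seq{\alpha}}}(x) = \alpha - \alpha_x$ tends to $0$, so the limit condition of Definition~\ref{df:lc} holds. Consequently Theorem~\ref{thm:imply} yields
\[
  c_{\seq{\alpha}} \to c_{\seq{\beta}} \;\LLR\; \exists N \in \NN\, \forall x\, \big[\, N(\alpha - \alpha_x) > \beta - \beta_x \,\big].
\]
The proposition then reduces to translating between this uniform tail bound and the Solovay condition $\beta - \phi(q) < N(\alpha - q)$.

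The implication (ii) $\Rightarrow$ (iii) is trivial. For (iii) $\Rightarrow$ (i), take $N$ from the displayed equivalence and define $\phi$ on $\Qbinary \cap [0, \alpha)$ by $\phi(q) = \beta_{x(q)}$, where $x(q)$ is the least $x$ with $\alpha_x > q$. Since $\alpha_x \to \alpha$, $x(q)$ is defined for every $q < \alpha$, making $\phi$ partial computable. Then
\[
  \beta - \phi(q) \;=\; \beta - \beta_{x(q)} \;\le\; N(\alpha - \alpha_{x(q)}) \;<\; N(\alpha - q),
\]
witnessing $\beta \le_S \alpha$. A minor rounding adjustment ensures $\phi(q)$ lands in $\Qbinary \cap [0, \beta)$ as required.

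For (i) $\Rightarrow$ (ii), fix $(\phi, N)$ witnessing $\beta \le_S \alpha$ and an arbitrary nondecreasing computable approximation $\seq{\alpha}$ of $\alpha$. In the principal case where $\alpha_x < \alpha$ for every $x$, each $\phi(\alpha_x)$ is defined; I set $\beta_x = \max_{y \le x} \phi(\alpha_y)$, giving a computable nondecreasing sequence of rationals bounded above by $\beta$. From $\beta - \phi(\alpha_x) < N(\alpha - \alpha_x)$ and $\alpha - \alpha_x \to 0$ I deduce $\beta_x \to \beta$; the monotone envelope also yields $\beta - \beta_x \le \beta - \phi(\alpha_x) < N(\alpha - \alpha_x)$. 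Applying Theorem~\ref{thm:imply} in the reverse direction delivers $c_{\seq{\alpha}} \to c_{\seq{\beta}}$.

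The main obstacle is merely routine bookkeeping: respecting the $\Qbinary$ versus $\QQ$ distinction and the strict inequality $\phi(q) < \beta$ in the definition of $\le_S$, and handling the degenerate case in which the approximation $\seq{\alpha}$ stabilizes (forcing $\alpha$ to be rational, where the argument simplifies considerably). With Theorem~\ref{thm:imply} in hand, the remainder is a clean transcription between the ``left-cut'' picture of left-c.e.\ reals and the ``cost of changes'' picture of additive cost functions.
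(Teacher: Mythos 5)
Your proposal is correct and matches the paper's own argument: both directions are routed through Theorem~\ref{thm:imply}, reducing cost-function implication to the uniform tail bound $\beta - \beta_x = O(\alpha - \alpha_x)$, and then translating to and from the Solovay condition via essentially the same $\phi$ (your $x(q)$ is the paper's $x$ with $\alpha_{x-1}\le q<\alpha_x$) and the same monotone-envelope construction of $\seq{\beta}$ from $\phi$. The only difference is that you spell out the dyadic-rounding and degenerate (rational $\alpha$) cases that the paper silently absorbs.
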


 \begin{proof} 

 \n (i) $\ria$ (ii). Given an effective sequence  $\seq \aaa$, by the definition of $\le_S$ there is  an  effective sequence~$\seq \beta$ such that $\beta - \beta_x = O(\aaa - \aaa_x)$ for each $x$. Thus $\ul \cc_{\seq \beta}   = O( \ul \cc_{\seq \aaa}  )$. Hence  $\cc_{\seq \aaa } \ria \cc_{\seq \beta}$ by  Theorem~\ref{thm:imply}.

 \vsps

   \n (iii) $\ria$ (i).  Suppose  we are given $\seq \aaa$ and $\seq \beta$ such that $\ul \cc_{\seq \beta}   = O( \ul \cc_{\seq \aaa}  )$. 
  Define a partial computable function $\phi$ by $\phi(q) = \beta_x$ if $\aaa_{x-1} \le q < \aaa_x$. Then $\beta \le_S \aaa$ via $\phi$. 
 \end{proof}

\subsection{The strength of an  additive cost function}

Firstly, we make some remarks related to Proposition~\ref{prop:heyheyey}. For instance, it implies that an  additive cost function can  be weaker than $\cc_{\la \Om \ra}$ without being  obeyed by all  the~$\DII$ sets.
  \begin{prop} There are additive cost functions $\cc,\dd$ such that $\cc_{\la \Om \ra} \ria \cc$, $\cc_{\la \Om \ra} \ria \dd$ and $\cc,\dd$ are incomparable under the implication of cost functions.  \end{prop}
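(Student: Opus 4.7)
The plan is to translate the problem via Proposition~\ref{prop:heyheyey} into the language of Solovay reducibility on left-c.e.\ reals, and then exploit the well-known incomparability phenomenon in that structure. Specifically, I will exhibit two left-c.e.\ reals $\alpha,\beta$ that are Solovay-incomparable and both Solovay below $\Om$; the corresponding additive cost functions (for suitable approximations) will be the desired $\cc,\dd$.

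Since $\Om$ is Solovay-complete among left-c.e.\ reals (Solovay~1975), every left-c.e.\ $\gamma$ satisfies $\gamma \le_S \Om$. Fixing the approximation $\Om_s = \leb\, \dom(\UM_s)$, Proposition~\ref{prop:heyheyey}, (i)$\Rightarrow$(ii), yields, for any left-c.e.\ real $\gamma$, an effective nondecreasing sequence $\seq{\gamma}$ converging to $\gamma$ such that $\cc_{\la\Om\ra} \ria \cc_{\la\gamma\ra}$. So once we have $\alpha,\beta$ both Solovay below $\Om$, we automatically obtain cost functions $\cc := \cc_{\la\alpha\ra}$ and $\dd := \cc_{\la\beta\ra}$ satisfying the first two clauses of the statement.

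To produce Solovay-incomparable $\alpha,\beta$, I invoke Friedberg--Muchnik to obtain Turing-incomparable c.e.\ sets $A,B$, and set
\[ \alpha \; = \; \sum_{n\in A} \tp{-2n-2}, \qquad \beta \; = \; \sum_{n\in B} \tp{-2n-2}. \]
Each is manifestly a left-c.e.\ real. Because the nonzero dyadic positions are separated by gaps of size at least $2$, no carries occur, so bit $2n+2$ in the binary expansion of $\alpha$ equals $1$ iff $n\in A$; hence $\alpha \equiv_T A$ and $\beta \equiv_T B$. Solovay reducibility on left-c.e.\ reals refines Turing reducibility (directly from the definition: from an oracle approximation of $\alpha$ within $\varepsilon/(2N)$ one obtains $\phi(q)$ within $\varepsilon/2$ of $\beta$), so the Turing-incomparability of $A,B$ forces the Solovay-incomparability of $\alpha,\beta$.

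With $\cc,\dd$ as above, suppose $\cc \ria \dd$. Proposition~\ref{prop:heyheyey}, (iii)$\Rightarrow$(i), then gives $\beta \le_S \alpha$, contradicting the previous paragraph; by symmetry $\dd \not\ria \cc$. The only minor subtlety is the gap between reducibility of reals and implication between \emph{specific} cost functions, but this is cleanly bridged by using Proposition~\ref{prop:heyheyey} in both directions. The real content of the argument is the Solovay-incomparability of $\alpha,\beta$, which is supplied by Friedberg--Muchnik together with the elementary coding $A \mapsto \sum_{n \in A}\tp{-2n-2}$.
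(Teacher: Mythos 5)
Your proof is correct and follows the same route as the paper: reduce to the existence of Solovay-incomparable left-c.e.\ reals below $\Om$ and transfer back via Proposition~\ref{prop:heyheyey}. You merely unpack the details the paper leaves implicit (Friedberg--Muchnik to get Turing-incomparable c.e.\ sets, the coding $A\mapsto\sum_{n\in A}\tp{-2n-2}$, and the fact that $\le_S$ refines $\le_T$), but the argument is the same.
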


  \begin{proof} Let $\cc,\dd$ be cost functions corresponding to enumerations of Turing (and hence Solovay) incomparable left-c.e.\ reals. Now apply Prop.~\ref{prop:heyheyey}.  \end{proof}

  Clearly, if $\beta$ is a  computable real then any c.e.\  set obeys $\cc_{\la \beta \ra}$.  The intuition we garner from Prop.\ \ref{prop:heyheyey} is that a more complex left-c.e.\ real $\beta$ means that the sets $A \models \cc_{\la \beta \ra}$ become less complex, and conversely. We  give  a little more    evidence for this principle: if $\beta$ is non-computable, we show that a set $A \models \cc_{\la \beta \ra}$ cannot be weak truth-table complete. However, we also build a  non-computable $\beta$ and  a c.e.\ Turing complete set that  obeys $\cc_{\la \beta \ra}$

\begin{prop}
	Suppose  $\beta$ is a non-computable left-c.e.\ real and     $A \models \cc_\seq \beta$. 
	Then  $A$ is not weak truth-table complete.  
\end{prop}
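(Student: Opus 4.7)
The plan is to assume $A$ is wtt-complete and derive that $\beta$ is computable, contradicting the hypothesis. By Proposition~\ref{prop:c.e. ub cf} applied to any computable approximation of~$A$ witnessing $A\models\cc_{\langle\beta\rangle}$, the associated change set~$C$ is c.e., satisfies $C\ge_{tt} A$ (using that $A$ is $\omega$-c.e., by the fact preceding Proposition~\ref{prop:c.e. ub cf}), and still obeys~$\cc_{\langle\beta\rangle}$; since $C\ge_{wtt} A\ge_{wtt}\Halt$, the set~$C$ is also wtt-complete. Replacing~$A$ by~$C$, we assume $A$ is c.e. By Fact~\ref{fact:fukd} fix a computable enumeration $\langle A_s\rangle$ of~$A$ obeying~$\cc_{\langle\beta\rangle}$, and by Fact~\ref{easy1} arrange the total cost to be less than any prescribed~$\epsilon>0$, uniformly in~$\epsilon$. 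Let $\Gamma$ and a computable strictly increasing~$h$ witness $\Halt=\Gamma^A$ with use at most~$h$.

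For each~$n$, let $s(n)$ denote the stabilization stage of~$A\restriction h(n)$. The central estimate I would establish is $\beta-\beta_{s(n)}<\epsilon$, via a look-ahead rearrangement of~$\langle A_s\rangle$ in the spirit of Section~\ref{s:look_ahead}: modify the enumeration (at bounded cost inflation, exploiting the monotonicity of~$\cc_{\langle\beta\rangle}$ in the first coordinate, as in Proposition~\ref{prop:downward_closure}) so that it is one-element-per-stage from some stage onward. Then consecutive enumeration stages $t_{k-1}<t_k$ differ by~$1$, and the least change $a_{t_k}<t_k=t_{k-1}+1$ satisfies $\beta_{a_{t_k}}\le\beta_{t_{k-1}}$, yielding $\beta_{t_k}-\beta_{t_{k-1}}\le\beta_{t_k}-\beta_{a_{t_k}}=\cc_{\langle\beta\rangle,t_k}(A_{t_k})$. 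Telescoping across enumeration stages $t_k>s(n)$ then gives $\beta-\beta_{s(n)}\le\sum_{t>s(n)}\cc_{\langle\beta\rangle,t}(A_t)<\epsilon$.

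The main obstacle is converting this bound into a genuinely \emph{computable} approximation of~$\beta$, since~$s(n)$ is only $A$-wtt-computable. The plan exploits that $\beta-\beta_s<\epsilon$ for every $s\ge s(n)$, and computably upper-bounds~$s(n)$ as follows. Dually to the central estimate, each enumeration into~$A$ at a stage $t\le s(n)$ with position $a_t<h(n)$ contributes cost $\beta_t-\beta_{a_t}\ge\beta_t-\beta_{h(n)}$, and these contributions sum to at most the total cost~$<\epsilon$; since the last such stage is exactly~$s(n)$, one obtains $\beta_{s(n)}-\beta_{h(n)}<\epsilon$. Taking $\epsilon=2^{-n}$, the computable stage $T_n:=\min\{t:\beta_t\ge\beta_{h(n)}+2^{-n}\}$, whenever it exists, is an upper bound for~$s(n)$, and then $\beta-\beta_{T_n}\le\beta-\beta_{s(n)}<2^{-n}$.

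To conclude, the set $N:=\{n:\beta-\beta_{h(n)}>2^{-n}\}$ is c.e., and must be infinite---otherwise $h$ would dominate a modulus of~$\beta$, making $\beta$ computable. Any c.e.\ infinite set contains an infinite computable subset $m_1<m_2<\cdots$, and for each~$m_k$ the stage $T_{m_k}$ is computable and satisfies $\beta-\beta_{T_{m_k}}<2^{-m_k}$. Hence $n\mapsto\beta_{T_{m_{k(n)}}}$, with $k(n):=\min\{k:m_k\ge n\}$, is a computable sequence of rationals converging to~$\beta$ at rate~$2^{-n}$. This exhibits~$\beta$ as computable, contradicting the hypothesis and completing the proof plan.
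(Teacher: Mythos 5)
Your overall plan---derive a computable fast-converging approximation of $\beta$, contradicting non-computability---matches the paper's target, but the mechanism you use to get the required estimate has a genuine gap, and the missing idea is exactly what the paper's proof supplies.

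The problematic step is the ``central estimate'' $\beta-\beta_{s(n)}<\epsilon$, where $s(n)$ is the stabilization stage of $A\restriction h(n)$. Your telescoping argument requires that, at (essentially) every stage $t>s(n)$, the least $A$-change occurs at a position $\le t-1$, so that the cost charged at stage $t$ dominates $\beta_t-\beta_{t-1}$. But obeying an additive cost function places no such constraint: the definition of $\cc_s(A_s)$ charges \emph{zero} when the least change at stage $s$ is at a position $\ge s$, and more generally changes at positions close to the current stage have cost $\beta_t-\beta_x$ that can be made arbitrarily small. So after $s(n)$ the approximation can keep changing only at ``late'' positions, paying almost nothing, while $\beta$ continues to grow; the total cost past $s(n)$ does \emph{not} control $\beta-\beta_{s(n)}$. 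The ``one-element-per-stage'' rearrangement you invoke does not fix this: spreading enumerations across consecutive stages still does not force the enumerated element to sit below the stage, and it is not justified that such a rearrangement keeps the cost bounded (the look-ahead argument in Proposition~\ref{prop:downward_closure} changes things \emph{earlier}, not one per stage, and serves a different purpose). Since your later steps, including the derived bound $\beta-\beta_{T_{m_k}}<2^{-m_k}$, all lean on this estimate, the proof does not close.

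The paper's proof is structurally different in one crucial way: it is \emph{active} rather than passive. It builds a c.e.\ set $B$ and, via the recursion theorem, obtains a wtt-reduction $\Gamma$ with use $g$ such that $B=\Gamma^A$. For each $e$, once the opponent reveals $\beta_s-\beta_{g(2^{e+1})}>2^{-e}$, the construction enumerates $I_e=[2^e,2^{e+1})$ into $B$ one element at a time, waiting for the reduction to recover; this \emph{forces} $A\restriction g(2^{e+1})$ to change $2^e$ times, each change costing at least $\beta_t-\beta_{g(2^{e+1})}>2^{-e}$, driving the total cost above $1$. So either the cost bound is violated, or $\beta-\beta_{g(2^{e+1})}\le 2^{-e}$ for every $e$, making $\beta$ computable. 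Your approach only \emph{observes} when $A\restriction h(n)$ stabilizes against the fixed oracle $\Halt$; it has no mechanism to drive up the cost, which is why the estimate you need is not available. To repair the argument you would essentially have to import the paper's recursion-theorem gadget and construct the target of the reduction yourself.
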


\begin{proof} Assume for a contradiction that $A$ is weak truth-table complete.  We  can fix a computable approximation $\seq  {A_s}$ of $A$ such that $ \cc_\seq \beta \seq  {A_s} \le 1$. We build a c.e.\ set $B$. By the recursion theorem we can suppose we have a weak truth-table reduction $\Gamma$ with computable use bound $g$ such that $B = \Gamma^A$.   We build $B$ so  that $\beta -  \beta_{g(\tp{e+1})} \le \tp{-e}$, which implies that $\beta$ is computable. 

Let $I_e = [2^e, 2^{e+1})$.  If ever  a stage $s$ appears such that $\beta_s -  \beta_{g(\tp{e+1})} \le \tp{-e}$, then we start  enumerating into   $B \cap I_e$ sufficiently slowly so that $A \uhr {g(2^{e+1})}$ must change $2^e$ times. To do so, each time we enumerate into  $B$, we wait for a recovery of $B = \Gamma^A$ up to  $2^{(e+1)}$.  The $A$-changes we enforce yield a total cost $>1$ for a contradiction.
\end{proof} 
\begin{prop} There is a non-computable left-c.e.\ real $\beta$ and a c.e.\ set  $A \models \cc_\seq \beta$
such that $A$  is Turing complete. 
\end{prop}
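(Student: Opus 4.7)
The plan is to construct $A$ and $\beta$ via a direct coding that avoids any dynamic marker-moving. Fix a computable strictly increasing function $g \colon \NN \to \NN$ with enormous growth, for instance $g(e) = 2^{2^e}$, and set $A = \{g(e) : e \in \Halt\}$, enumerated by the rule: $g(e)$ enters $A$ at the first stage $s$ such that $s \ge g(e)$ and $e \in \Halt_s$. Since $g$ is injective and computable, $\Halt \le_m A$ via $e \mapsto g(e)$, while $A \le_T \Halt$ as $A$ is c.e.; hence $A$ is Turing complete.

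I will build $\beta$ by a finite-injury diagonalisation against the computable reals, in which requirement $N_e$ (``$\beta$ is not the $e$-th computable real'') is allowed to act only at stages in the window $(g(e-1), g(e)]$ and is allotted a total $\beta$-growth budget of $2^{-e}$. The enormous width of each window gives $N_e$ ample time to stabilise a diagonalising witness, and by the standard argument the resulting $\beta$ is non-computable; meanwhile $\beta_\infty \le \sum_e 2^{-e} = 2$, so $\beta$ is a genuine left-c.e.\ real. The crucial observation is that all growth attributed to $N_e$ occurs by stage $g(e)$, so
\[
\beta_\infty - \beta_{g(e)} \le \sum_{e' > e} 2^{-e'} = 2^{-e}.
\]

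The cost analysis is then immediate. For each $e \in \Halt$, the marker $g(e)$ enters $A$ at stage $t_e = \max(s_e, g(e)) \ge g(e)$ (where $s_e$ is when $e$ enters $\Halt$), so
\[
\cc_{\seq{\beta}}(g(e), t_e) \;=\; \beta_{t_e} - \beta_{g(e)} \;\le\; \beta_\infty - \beta_{g(e)} \;\le\; 2^{-e}.
\]
Summing over events gives total cost $\le \sum_e 2^{-e} = 2 < \infty$ (the per-stage charge when several events coincide equals the largest of them, since $g$ is increasing and $\beta$ is non-decreasing, hence it is still dominated by the per-event sum). Therefore $A \models \cc_{\seq{\beta}}$.

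The main obstacle is jointly meeting the finite-injury requirements for non-computability of $\beta$ and the cost bound for $A$. The rapid growth of $g$ and the geometric budget $2^{-e}$ per requirement resolve this: the windows $(g(e-1), g(e)]$ are so large that each $N_e$ has plenty of time to wait for the $e$-th computable approximation to settle enough to be diagonalised against, while the geometric budget simultaneously controls $\beta_\infty$ (making $\beta$ converge) and the coding cost (giving $A \models \cc_{\seq{\beta}}$).
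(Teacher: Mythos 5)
Your construction cannot work, and the obstruction is structural. You arrange that all of $N_e$'s contribution to $\beta$ occurs by stage $g(e)$, with $g(e)=2^{2^e}$ \emph{fixed and computable} in advance, so that
\[
\beta_\infty - \beta_{g(e)} \le \sum_{e' > e} 2^{-e'} = 2^{-e}.
\]
But this inequality, with $g$ computable, is precisely the statement that $\beta$ has a computable modulus of convergence: the sequence $q_e := \beta_{g(e)}$ is computable and satisfies $|\beta - q_e| \le 2^{-e}$, so $\beta$ is a computable real. This directly contradicts the non-computability you need. The hand-wave that ``the enormous width of each window gives $N_e$ ample time to stabilise a diagonalising witness'' is where the argument breaks: no \emph{fixed} amount of time is enough to wait for an arbitrary partial computable function to converge, so you cannot meet the requirements $N_e$ inside pre-assigned computable windows. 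Indeed, the fact that you cannot is forced by the above observation.

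The paper's proof avoids exactly this trap. It uses a movable marker $\gamma_k$ (the use of a reduction $\Gamma$ with $\Halt = \Gamma(A)$) and maintains the invariant $\beta_s - \beta_{\gamma_{k,s}} \le 2^{-k}$. The final positions $\gamma_k = \lim_s \gamma_{k,s}$ form a modulus of convergence for $\beta$, but they are not computable --- they are only $\Halt$-computable, because the markers are moved when the non-computability requirements act (in response to $\phi_k(k)$ converging, which can happen arbitrarily late). That is the whole point: the modulus must be non-computable if $\beta$ is to be non-computable, so the positions at which the cost resets cheaply cannot be fixed in advance. Your fixed coding $A = \{g(e) : e \in \Halt\}$ trades the dynamic marker-moving for a static one and thereby loses the game: either $\beta$ keeps its non-computability (and then the tail $\beta_\infty - \beta_{g(e)}$ cannot be computably bounded, breaking the cost estimate), or the cost estimate holds (and then $\beta$ is computable). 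You must let the locations that witness cheap cost depend non-computably on the action of the non-computability requirements, which is what the marker machinery does.
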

\begin{proof} We build a Turing reduction $\Gamma$ such that $\Halt = \Gamma(A)$. Let $\gamma_{k,s}+1$ be the use of the computation $\Gamma^\Halt(k)[s]$. We  view $\gamma_{k}$ as a movable marker as usual. The initial value is $\gamma_{k,0} = k$. Throughout the construction we maintain the invariant   
\bc $\beta_s - \beta_{\gamma_{k,s}} \le \tp{-k}$. \ec
Let $\seq {\phi_e}$ be the usual effective list of partial computable  functions. By convention, at each stage at most one computation $\phi_e(k)$ converges newly.
To make $\beta$ non-computable,  it suffices to  meet the requirements
\bc $R_k\colon \,   \phi_k(k) \DA \RRA   \beta - \beta_{\phi_k(k)} \ge \tp{-k}$. \ec
\n {\it Strategy for $R_k$.} If $\phi_k(k) $ converges newly at stage $s$, do the following.
\bi \item[1.] Enumerate $\gamma_{k,s}$ into $A$. (This incurs a cost of at most  $\tp{-k}$.)
\item[2.]  Let $\beta_s = \beta_{s-1} + \tp{-k}$.
\item[3.] Redefine $\gamma_i$ ($i\ge k$) to large values in an increasing fashion.
\ei
In the construction, we run the strategies for the $R_k$. If  $k$ enters $\Halt$ at stage $s$, we enumerate $\gamma_{k,s}$ into $A$.

Clearly each $R_k$ acts at most once, and is met. Therefore $\beta$ is non-computable. The markers $\gamma_k$ reach a limit. Therefore $\Halt = \Gamma(A)$. Finally, we maintain the stage invariant, which implies that the total cost of enumerating $A$ is at most $4$. 
\end{proof} 
As pointed out by Turetsky, it can be verified  that $\beta$ is in fact  Turing complete. 

Next,  we note that if we have two computable  approximations from the left of the same real, we obtain  additive cost functions with  very similar classes of models.
   \begin{prop} Let $\seq {\aaa}, \seq \beta$ be left-c.e.\ approximations of the same real.  Suppose that $A \models \cc_{\seq \aaa}$. Then there is $B \equiv_m A$ such that $ B \models \cc_{\seq \beta}$. If $A$ is c.e.,  then $B$ can be chosen  c.e.\ as well. \end{prop}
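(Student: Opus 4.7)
The plan is to encode $A$ into $B$ by placing each bit $A(x)$ at a position $f(x)$ in $B$, for a computable strictly increasing $f\colon\NN\to\NN$ chosen to track $\beta$'s schedule. Define $f(x)=\mu s>f(x-1).\,\beta_s\ge\alpha_x$; this is computable whenever $\alpha_x<\gamma$, and the edge case forces $\gamma$ to be a dyadic rational, handled by a minor modification of $f$ past the stabilization of $\alpha$. Set $B=\{f(x):x\in A\}$. For $A\notin\{\ES,\NN\}$ we have $B\equiv_m A$: $A\le_m B$ via $x\mapsto f(x)$, and $B\le_m A$ via $f^{-1}$ on $\range(f)$ extended by a fixed $x_0\in\ol A$; degenerate cases are handled by $B=A$, and $B$ is c.e.\ whenever $A$ is.

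Given $\seq{A_s}$ obeying $\cc_{\seq\alpha}$, define the approximation $B_t=\{f(y):y\in A_{h(t)}\}$ with $h(t)=\max\{x:f(x)\le t\}$. Strict monotonicity of $f$ forces $B_t\ne B_{t-1}$ to happen only at stages $t=f(x)$, when $h$ ticks from $x-1$ to $x$. Then $B_t\triangle B_{t-1}=\{f(y):A_{x-1}(y)\ne A_x(y)\}$, with least element $f(a_x^*)$ where $a_x^*$ is the least position of change in $A$ between stages $x-1$ and $x$. If $a_x^*=x$ the cost vanishes; otherwise the cost is $\beta_{f(x)}-\beta_{f(a_x^*)}$, and the minimality of $f(x)$ yields $\beta_{f(x)}<\alpha_x+\delta_x$ with $\delta_x=\beta_{f(x)}-\beta_{f(x)-1}$, while $\beta_{f(a_x^*)}\ge\alpha_{a_x^*}$, giving the bound $(\alpha_x-\alpha_{a_x^*})+\delta_x$. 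Summing,
\[
\cc_{\seq\beta}\seq{B_t}\;\le\;\cc_{\seq\alpha}\seq{A_s}\;+\;\sum_x\delta_x\;\le\;\cc_{\seq\alpha}\seq{A_s}\;+\;(\gamma-\beta_0)\;<\;\infty,
\]
because the $f(x)$'s are distinct $\beta$-stages and $\sum_x\delta_x$ is a sum of distinct single-step $\beta$-increments, bounded by the telescoping total $\gamma-\beta_0$.

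The main obstacle is verifying the minimality bound $\beta_{f(x)-1}<\alpha_x$ that underwrites the overshoot estimate. It holds whenever $f(x)>f(x-1)+1$, but can fail when $\beta$ has already overshot $\alpha$ at stage $f(x-1)$. The fix is a preliminary refinement of $\seq\alpha$ (or an adjustment of $f$) ensuring each $\beta$-step crosses at most one $\alpha$-value: the contribution from overshot steps telescopes to at most $\gamma-\beta_0$, which can be absorbed into the additive constant while preserving $A\models\cc_{\seq\alpha}$. This closes the estimate, yielding $B\models\cc_{\seq\beta}$.
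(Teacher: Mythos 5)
Your construction takes a different route from the paper: you slow down the $A$-approximation one-for-one, mapping $A$-stage $x$ to $B$-stage $f(x)$ where $f(x)=\mu s>f(x-1).\,\beta_s\ge\alpha_x$, and try to control the cost by the minimality of $f$. The paper instead fixes a \emph{synchronization sequence} of stages $s_0<s_1<\cdots$ with $|\alpha_{s_i}-\beta_{s_i}|\le 2^{-i}$, chooses $f$ with $\alpha_x\le\beta_{f(x)}$, and \emph{batches} all $A$-changes occurring in $[s_i,s_{i+1})$ into a single $B$-change at stage $s_i$; the chain $\alpha_s-\alpha_x\ge\alpha_{s_i}-\alpha_x\ge\alpha_{s_i}-\beta_{f(x)}\ge\beta_{s_i}-\beta_{f(x)}-2^{-i}$ then gives $\cc_{\seq\beta}\seq{B_t}\le\cc_{\seq\alpha}\seq{A_s}+\sum_i 2^{-i}$ with each $2^{-i}$ charged at most once.

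The gap you flag is real, and the proposed fix does not close it. Your bound $\beta_{f(x)}-\beta_{f(a_x^\ast)}\le(\alpha_x-\alpha_{a_x^\ast})+\delta_x$ needs $\beta_{f(x)-1}<\alpha_x$, but the deeper failure is that a single $\beta$-increment can be charged many times. Concretely, suppose $\alpha$ is constant on a long block $[n_0,n_1]$ (say $\alpha_n=v$ there) while $\beta$ makes a jump of size $\epsilon$ at some stage $T\in(f(n_0),f(n_1)]$. Then $f(n)=f(n_0)+(n-n_0)$ on the block, and if $A$ changes at positions inside $[n_0,n_1)$ at $A$-stages $x\in[n_0,n_1]$, each such change has $\cc_{\seq\alpha}$-cost $\alpha_x-\alpha_{a_x^\ast}=0$, yet the corresponding $B$-change at stage $f(x)\ge T$ with least position $f(a_x^\ast)<T$ costs at least $\epsilon$. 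By taking $n_1-n_0$ large, $\cc_{\seq\beta}\seq{B_t}$ can be made arbitrarily large while $\cc_{\seq\alpha}\seq{A_s}$ stays bounded; the blow-up is not a telescoping quantity and is not controlled by $\gamma-\beta_0$. Your suggestion of a preliminary refinement of $\seq\alpha$ points in a workable direction (choose a subsequence $\alpha'=\alpha\circ g$ and a new $f'$ satisfying $\alpha'_{n}>\beta_{f'(n-1)}$), but this replaces $\cc_{\seq\alpha}$ by a weaker cost function, so the $A$-approximation would itself have to be re-chosen, e.g.\ via the look-ahead argument of Theorem~\ref{thm:imply}; it is not a matter of absorbing an additive constant while ``preserving $A\models\cc_{\seq\alpha}$''. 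The paper's batching at synchronization stages sidesteps this entirely, since a run of $A$-changes collapses to at most one $B$-change between consecutive $s_i$'s.
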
  
   
 \begin{proof} Firstly,   suppose  that $A$ is c.e. By Fact~\ref{fact:fukd} choose a computable enumeration $\seq{A_s} \models \cc_{\seq \aaa}$.  
 
 By the hypothesis on the sequences $\seq {\aaa}$ and $ \seq \beta$,   there is a computable  sequence of stages  $s_0 < s_1 < \ldots$   such that $|\aaa_{s_i} - \beta_{s_i}|  \le \tp{-i}$. Let $f$ be a strictly increasing computable function such that $\aaa_x \le \beta_{f(x)}$ for each~$x$.  
 
To define $B$,  if $x $ enters $A$ at stage $s$, let $i$ be greatest such that $s_i \le s$. If $f(x) \le s_i$   put $f(x)$ into $B$ at stage $s_i$.

 Clearly \bc $\aaa_s - \aaa_x \ge \aaa_{s_i} - \aaa_x \ge \aaa_{s_i} - \beta_{f(x)}  \ge \beta_{s_i} - \beta_{f(x) } - \tp{-i}$. \ec So $ \cc_{\seq \beta} \seq {B_s} \le  \cc_{\seq \aaa} \seq {A_s}+ \sum_i \tp{-i}$.

 Let $R$ be the computable subset of $A$ consisting of those $x$  that are enumerated early,  namely  $x $ enters $A$ at  a stage $s$ and  $ f(x)  > s_i$ where  $i$ is greatest such that $s_i \le s$. Clearly $B = f(A-R)$. Hence $B \equiv_m A$.

 The argument can be adapted to the case that $A$ is $\DII$. Given a computable approximation $\seq{A_s}$ obeying $\cc_{\seq \aaa }$, let $t$ be the least $s_i$  such that  $s_i \ge f(x)$.  For $s \le t$ let $B_s(f(x)) = A_t(x)$. For $s > t$ let   $B_s(f(x)) = A_{s_i}(x)$   where $s_i \le s < s_{i+1}$. \end{proof}

\section{Randomness, lowness, and $K$-triviality}
\label{s:random_low_K-triv}

Benign  cost functions  were briefly discussed in the introduction.
\begin{deff}[\cite{Greenberg.Nies:11}] \label{df:benign} {\rm  A monotonic cost function $\cc$ is called   {\it benign} if  there is a computable function~$g$ such that  for all $k$, 
   \bc    $x_0< x_1 < \ldots < x_k  \lland  \fa i < k \, [ \cc(x_i, x_{i+1}) \ge \tp{-n}]$ implies $k \le g(n)$. \ec } \end{deff} 
   %
Clearly such a cost function  satisfies the limit condition. Indeed, $\cc$ satisfies the limit condition if and only if  the above holds for some $g \leT \Halt$. For example,  the cost function $\cost $ is benign via  $g(n) = 2^n$.
  Each additive cost function is benign  where $g(n) = O(2^n)$.  For more detail see  \cite{Greenberg.Nies:11} or~\cite[Section~8.5]{Nies:book}.

For definitions and background on the extreme lowness property called strong jump traceability, see \cite{Greenberg.Nies:11,Greenberg.Hirschfeldt.ea:12} or \cite[Ch.\ 8 ]{Nies:book}. 
	We will use the main result in  \cite{Greenberg.Nies:11} already quoted in the introduction: a c.e.\ set $A$ is strongly jump traceable iff $A$ obeys each benign cost function.

	\subsection{A cost function implying  strong jump traceability} 
	The following type of cost functions first appeared in \cite{Greenberg.Nies:11} and \cite[Section 5.3]{Nies:book}. Let $Z \in \DII$ be ML-random. Fix a computable approximation $\seq{Z_s}$ of $Z$ and let $\cc_Z$ (or, more accurately, $\cc_\seq {Z_s}$) be the cost function defined  as follows. Let 
	$\cc_Z(x,s) = \tp{-x} $ for each $x\ge s$; if $x<s$,  and~$e< x$ is
	 least such that
	$Z_{s-1} ( e) \neq Z_s  ( e)$, we     let 
	 \begin{equation} \label{eqn: cZ}  c_Z(x,s) =
	\max(c_Z(x,s-1), \tp{-e} ). \end{equation}  Then $A \models \cc_Z$ implies $A \leT Z$ by the aforementioned result
	from  \cite{Greenberg.Nies:11}, which is proved like its variant above. 

	A {Demuth test}  is a sequence of c.e.\ open sets $(S_m)\sN{m}$ such that  
\bi  \item $\fao m \leb S_m \le \tp{-m}$, and there is a function  $f$ such that  $S_m $ is the $\SI 1$  class   $\Opcl{W_{f(m)}}$; \item  $f(m) = \lim_s g(m,s)$ for a computable function $g$ such that the size of the set $\{s\colon \, g(m,s) \neq g(m,s-1)\}$ is bounded by a computable function $h(m)$.  \ei
 
  A set~$Z$ {passes} the test if $  Z\not \in   S_m$ for almost every~$m$.  
   We say that~$Z$ is  {Demuth random}  if~$Z$ passes  each Demuth test.  
  For background on Demuth randomness see \cite[pg.\ 141]{Nies:book}. 

	\begin{prop} Suppose $Y$ is a  Demuth random   $\DII$ set and  $A \models  c_Y$. Then $A \leT Z$ for each $\omega$-c.e.\ ML-random set $Z$.  \end{prop}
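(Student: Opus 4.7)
The plan is to reduce to the improved form of the Greenberg--Hirschfeldt--Nies theorem stated in the paper's introduction: every c.e., strongly jump traceable (s.j.t.) set is Turing below every complex $\omega$-c.e.\ set. Any ML-random $\omega$-c.e.\ set $Z$ is complex since $K(Z\uhr{n}) \gep n - O(1)$, so it will suffice to find a c.e.\ set $C$ with $A \lwtt C$ such that $C$ is s.j.t. To produce such a $C$, I would apply Proposition~\ref{prop:c.e. ub cf} to the $c_Y$-obeying approximation $\seq{A_s}$ of $A$: since $c_Y$ is monotonic in the first component, its change set $C$ is c.e., satisfies $A \lwtt C$, and obeys $c_Y$. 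Thus the task reduces to showing that this $C$ is s.j.t.

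To prove $C$ is s.j.t., I would invoke the Greenberg--Nies characterisation~\cite{Greenberg.Nies:11}: a c.e.\ set is s.j.t.\ iff it obeys every benign cost function. So fix a benign $\dd$ with associated computable function $g$. Starting from a computable enumeration of $C$ obeying $c_Y$ (available by Fact~\ref{fact:fukd}), the plan is to build a new computable enumeration $\seq{\wt C_s}$ of $C$ that obeys $\dd$, by a look-ahead argument in the spirit of Section~\ref{s:look_ahead}. Whenever a planned enumeration of some $x$ at stage $s$ would incur $\dd$-cost at least $\tp{-n}$, the construction delays it unless the $Y$-approximation $Y_s\uhr{n}$ has moved sufficiently since the last such $n$-expensive event. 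Each permitted $n$-expensive enumeration is then charged to a change in the approximation of $Y$ at some position $\le n$; the budget for these charges will be supplied by a Demuth test $(S_m)$ of $Y$, into which the current $Y_s$-cylinder (of length chosen as a function of $m$) is enumerated every time an $m$-expensive event is processed.

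The main obstacle is verifying that $(S_m)$ really is a Demuth test: both the measure bound $\leb(S_m) \le \tp{-m}$ and the $\omega$-c.e.\ indexing $S_m = \Opcl{W_{f(m)}}$ with a computable bound on the changes of $f(m)$. Benignness of $\dd$ (through $g$) supplies the computable change-bound, while the monotonicity of $c_Y$ together with the $c_Y$-cost budget of $\seq{C_s}$ controls the measure provided the cylinder lengths are chosen suitably. Once this is established, Demuth randomness of $Y$ forces $Y \notin S_m$ for almost every $m$, so the look-ahead construction terminates in a $\dd$-obeying enumeration of $C$. Hence $C \models \dd$ for every benign $\dd$, so $C$ is s.j.t., and the quoted theorem yields $C \leT Z$ for every $\omega$-c.e.\ ML-random $Z$; combined with $A \lwtt C$ this gives $A \leT Z$.
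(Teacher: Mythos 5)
Your route is genuinely different from the paper's, and considerably more roundabout: the paper never goes near strong jump traceability. Its proof builds a single short Demuth test directly out of $Z$. With $\seq{Z_s}$ an $\omega$-c.e.\ approximation, let $G_e=[Y_t\uhr e]$ where $t$ is the last stage at which $Z(e)$ changed. Since $Z$ is $\omega$-c.e., each $G_e$ is updated a computably bounded number of times and $\leb G_e\le\tp{-e}$, so $(G_e)\sN e$ is a Demuth test. Because $Y$ passes it, for almost every $e$ each $Z(e)$-change is eventually followed by a change of $Y\uhr e$; this yields a computable approximation $\hat Z$ of $Z$ that changes at $e$ only when $Y\uhr e$ does, so $\cc_{\hat Z}\le c_Y$ pointwise, hence $A\models\cc_{\hat Z}$ and therefore $A\leT Z$. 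No detour through the change set or s.j.t.\ is needed.

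The crucial middle step of your proposal has a genuine gap. You reduce the proposition to the claim that the change set $C$ obeys every benign cost function, i.e.\ that $C$ is s.j.t. That is essentially (the special case of) the \Kuc--Nies theorem, which the paper explicitly cites \emph{after} this proposition as a separate, stronger result proved by the golden-run method; obtaining it from a single Demuth test is already a warning sign. Concretely, the sketch does not control your test $(S_m)$: to get both $\leb S_m\le\tp{-m}$ and an $\omega$-c.e.\ index with a computable change-bound, you need a computable bound on the number of $m$-expensive enumerations (those of $\dd$-cost at least $\tp{-m}$) in $\seq{\wt C_s}$. Benignness of $\dd$ only bounds the number of \emph{disjoint} intervals $[x_i,x_{i+1})$ with $\dd$-cost at least $\tp{-m}$, and nothing in your delay rule forces successive $m$-expensive enumerations $(x_i,s_i)$ to satisfy $x_{i+1}\ge s_i$, so $g(m)$ does not apply. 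Further, the delay is keyed to movements of $Y\uhr m$, but $Y$ is only $\DII$, so the number of such movements has no computable bound; and the budget you actually possess, the finiteness of the $c_Y$-cost of $\seq{C_s}$, constrains the positions at which $Y$ changes, which need not correlate with $\dd$-cost at all. So the key claim ``$C$ is s.j.t.'' is not established by the sketch, and the argument as a whole does not go through.
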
 
		In particular, $A$ is strongly jump traceable by \cite{Greenberg.Hirschfeldt.ea:12}.
	\begin{proof} Let $G^s_e = [ Y_t \uhr e]$ where $t\le s $ is greatest such that $Z_t(e) \neq Z_{t-1}(e)$. Let $G_e = \lim_s G^s_e$. (Thus, we only update $G_e$ when $Z(e)$ changes.) Then $(G_e) \sN e$ is a Demuth test. Since $Y$ passes this test,   there is $e_0$ such that 
	\bc $\fa e \ge e_0 \,  \fao t  [ Z_t(e) \neq Z_{t-1}(e) \ria  \ex s > t \  Y_{s-1} \uhr e \neq Y_s \uhr e] $. \ec
	We use this fact to define a computable approximation $(\hat Z_u)$ of $Z$  as follows:  let $\hat Z_u(e) = Z(e)$ for $e \le  e_0$; for $e > e_0$ let $\hat Z_u(e) = Z_s(e)$ where $s\le u$ is greatest such that  $Y_{s-1} \uhr e \neq Y_s \uhr e $. 

	Note that  $\cc_{\hat Z}(x,s) \le c_Y(x,s)$ for all $x, s$. Hence $A \models \cc_{\hat Z}$ and therefore $A \leT Z$. \end{proof}

	Recall that some  Demuth random set is  $\DII$. \Kuc\  and Nies \cite{Kucera.Nies:11} in their main result  strengthened  the foregoing proposition    in the case of a  c.e.\ sets $A$:   if  $A \leT Y$ for some Demuth random set $Y$, then  $A$ is strongly jump traceable.   Greenberg and Turetsky \cite{Greenberg.Turetsky:14} obtained the converse of this result: every c.e.\ strongly jump traceable is below a Demuth random. 

	\begin{remark} \label{spoon} {\rm  For each $\DII$ set $Y$ we have  $\cc_Y(x)= \tp{-F(x)}$ where $F$ is the $\DII$ function such that  \bc $F(x) = \min \{e\colon \, \ex s >x \, Y_s(e) \neq Y_{s-1}(e)\}$. \ec Thus $F$ can be viewed as a  modulus function in the sense of~\cite{Soare:87}. } 
	\end{remark}

	For a computable approximation $\Phi$ define   the cost function $\cc_\Phi $ as in (\ref{eqn: cZ}).  The following (together with Rmk.\ \ref{spoon}) implies that any computable approximation $\Phi$  of a  ML-random Turing incomplete      set    changes    late at small numbers, because the convergence of $\Om_s$ to $\Om$ is slow. 
	\begin{cor}
	Let $Y <_T \Halt $ be a  ML-random set. Let $\Phi$ be any  computable approximation of $Y$. Then $\cc_{\Phi} \ria \cost$ and therefore $O(c_\Phi(x)) = \cc_{\seq \Om}(x)$. \end{cor}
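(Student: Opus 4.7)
The plan is to prove the implication $\cc_{\Phi} \to \cost$ by an indirect, semantic route -- chaining together three results already in play -- and then extract the $O$-bound using Theorem~\ref{thm:imply} together with the equivalence $\cc_{\seq \Om} \leftrightarrow \cost$ noted at the end of Subsection~\ref{ss:standard_costfunction}.

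For the implication itself, I would take an arbitrary $\DII$ set $A$ with $A \models \cc_{\Phi}$. By the result from \cite{Greenberg.Nies:11} invoked just above (the same argument that shows $A \models \cc_Z \Rightarrow A \leT Z$, applied to the computable approximation $\Phi$ of $Y$ instead of $\seq{Z_s}$), we get $A \leT Y$. Now $Y$ is ML-random and $Y \not\ge_T \Halt$, so by the Hirschfeldt--Nies--Stephan theorem cited in the introduction, $A$ is $K$-trivial. Finally, by the characterization of $K$-triviality via the standard cost function (\cite{Nies:AM}; see also \cite[Ch.~5]{Nies:book}) we have $A \models \cost$. Since $A$ was arbitrary, $\cc_\Phi \to \cost$.

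At this point the cost function $\cc_\Phi$ is monotonic (monotonicity is built into the definition of $\cc_\Phi$ via the $\max$ in equation~(\ref{eqn: cZ})) and $\cost$ is monotonic as well, so Theorem~\ref{thm:imply} applies and yields a constant $N$ with $N \ul{\cc_\Phi}(x) > \ul\cost(x)$ for all $x$, i.e.\ $\ul\cost(x) = O(\ul{\cc_\Phi}(x))$. Combining with $\cc_{\seq \Om} \leftrightarrow \cost$ (immediate from Fact~\ref{fa:Omfact} together with the inequality $\ul{\cc_{\seq \Om}}(x) = \Om - \Om_x \le \ul\cost(x)\cdot O(1)$ explained at the end of that subsection), we conclude $\ul{\cc_{\seq \Om}}(x) = O(\ul{\cc_\Phi}(x))$, which is the stated equality.

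The only delicate point is that the step ``$A \leT Y$ with $Y$ incomplete ML-random $\Rightarrow A$ is $K$-trivial'' is stated by Hirschfeldt--Nies--Stephan for c.e.\ $A$, whereas our $A$ is only $\DII$. However, this is known to extend to the $\DII$ case (and is used implicitly elsewhere in the paper); alternatively, one can route through the change set of a computable approximation of $A$ obeying $\cc_\Phi$, which by Proposition~\ref{prop:c.e. ub cf} is a c.e.\ set also obeying $\cc_\Phi$ and Turing above $A$, and apply the c.e.\ form of HNS to the change set. This is the main obstacle and the only place where one must be careful about the class of sets involved.
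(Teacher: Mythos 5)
Your proposal is correct and takes essentially the same approach as the paper: routing through the change set to get a c.e.\ set obeying $\cc_\Phi$, then applying Greenberg--Nies to get it below $Y$, Hirschfeldt--Nies--Stephan to get $K$-triviality, and finally the characterization $K$-trivial $\iff$ obeys $\cost$. The ``delicate point'' you flag (HNS requires a c.e.\ set) and the change-set fix you propose via Proposition~\ref{prop:c.e. ub cf} is in fact exactly the paper's main line of argument, so the concern is correctly identified and correctly resolved.
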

	\begin{proof} If $A \models \cc_\Phi$ then $C \models \cc_\Phi$ where $C\geT A$ is the change set of the given approximation of $A$ as in Prop.\ \ref{prop:c.e. ub cf}. By \cite{Hirschfeldt.Nies.ea:07} (also see \cite[5.1.23]{Nies:book}),  $C$ and therefore $A$ is $K$-trivial. Hence $A \models \cc_{\seq \Om}$. \end{proof}

\subsection{Strongly jump traceable sets and d.n.c.\ functions} 
  Recall that  we write $X \le_{ibT} Y$ if $X \leT  Y$ with use function bounded by the identity.  When building prefix-free machines, we use the terminology of  \cite[Section 2.3]{Nies:book}   such as Machine Existence Theorem (also called the Kraft-Chaitin Theorem), bounded request set etc.

\begin{thm} \label{thm:Kuc-g} Suppose an  $\omega$-c.e.\ set $Y$  is diagonally noncomputable via a function that is weak truth-table below $ Y$. Let $A$ be a strongly jump traceable c.e.\ set.  Then $A \le_{ibT} Y$. \end{thm}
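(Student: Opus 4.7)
The strategy is to reduce the claim to the Greenberg--Nies characterisation \cite{Greenberg.Nies:11} by producing a benign cost function $\cc$ such that every c.e.\ set $B \models \cc$ satisfies $B \le_{ibT} Y$. Granting this, since $A$ is c.e.\ and strongly jump traceable, $A$ obeys every benign cost function, so in particular $A \models \cc$, whence $A \le_{ibT} Y$.

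\emph{Setup.} Fix a computable $\omega$-c.e.\ approximation $\seq{Y_s}$ with computable change-bound $b(e)$ on $Y(e)$, and fix a wtt-reduction $\Psi^Y = f$ where $f$ is DNC, with computable nondecreasing use-bound $h$ satisfying $h(e) \ge e$. Setting $f_s(e) := \Psi^{Y_s}(e)$ gives an induced computable approximation in which $f(e)$ changes at most $\tilde b(e) := \sum_{i < h(e)} b(i)$ times; $\tilde b$ is computable.

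\emph{Construction of $\cc$.} Define $\cc$ as a monotonic cost function tracking the ``spent DNC budget below position $x$''. Concretely, let $\cc(x,s)$ increase by a suitably calibrated amount each time some $f_s(e)$ with $h(e) \le x$ changes, with the total contribution below $x$ controlled by $\sum_{h(e) \le x} \tilde b(e)$. Since this bound is computable, from $n$ one computes how many disjoint intervals $[x,s)$ can satisfy $\cc(x,s) \ge 2^{-n}$, verifying benignness in the sense of Definition~\ref{df:benign}.

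\emph{From $B \models \cc$ to $B \le_{ibT} Y$.} I would construct a Turing functional $\Gamma$ with identity use by a Kučera-style avoidance argument building a partial computable $\psi$ in parallel. For each $x$, reserve a computable infinite pool $E_x$ of indices $e$ with $h(e) \le x$; this is possible since $h$ is computable and nondecreasing with $h(e) \ge e$. Whenever $B_s \uhr x \ne B_{s-1} \uhr x$, declare $\psi(e) \simeq B_s \uhr x$ for the next unused $e \in E_x$. By the DNC property, $f(e) \ne \psi(e)$ for every declared axiom. Since $\cc \seq{B_s} < \infty$ and $\cc$ charges for each $B_s\uhr x$-change, only finitely many axioms are issued on each $E_x$. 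To compute $\Gamma^Y(x)$ from $Y \uhr x$, enumerate all $\psi$-axioms declared on indices $e$ with $h(e) \le x$, compute the corresponding $f(e)$'s (each using only $Y \uhr {h(e)} \subseteq Y \uhr x$), and read off $B \uhr{x+1}$ as the unique $(x{+}1)$-bit string consistent with the enumeration record and the DNC exclusions.

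\emph{Main obstacle.} The delicate point is the coordinated calibration of three quantitative ingredients: the $\omega$-c.e.\ change bound $b$ (which must make $\cc$ benign), the use bound $h$ (which enforces the identity-use requirement), and the size of each pool $E_x$ (which must absorb every possible $B_s\uhr x$-change within the $\cc$-budget). Verifying that the finitely many DNC exclusions together with the enumeration history actually pin down $B\uhr{x+1}$ uniquely, and that the reservation scheme respects $h(e) \le x$ without exhausting indices, is the substantive technical work.
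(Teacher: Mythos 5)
Your high-level decomposition — extract a benign cost function $\cc$ from the $\omega$-c.e.\ approximation of $Y$ together with the DNC reduction, then show $B \models \cc$ implies $B \le_{ibT} Y$, and conclude via Greenberg--Nies — matches the paper's. The crucial middle step, however, is done very differently there: the paper invokes the Kjos-Hanssen--Merkle--Stephan theorem to get an order function $h$ with $2h(n) \lep K(Y\uhr n)$, and then shows that a late change of $A(x)$ would let one issue a Kraft--Chaitin request describing $Y\uhr e$ (for a small $e$) in fewer than $2h(e)$ bits, a contradiction. You instead attempt the direct Ku\v{c}era-style diagonalization against the DNC function $f$, and this is where your argument breaks. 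Declaring $\psi(e) \simeq B_s\uhr x$ is not the right move: after arranging (recursion theorem) that $\psi(e) = \varphi_e(e)$, the DNC property yields only $f(e) \neq B_s\uhr x$, which carries no information about $B$ or $Y$. What you actually need is to declare $\psi(e)$ to be the \emph{current value of $f$ itself}, $\psi(e) := \Psi^{Y_s}(e)$, so that $f(e)\neq\psi(e)$ forces $f(e)\neq f_s(e)$ and hence a change of $Y\uhr{h(e)}$ after stage $s$. That forced $Y$-change below $x$ is the mechanism by which one certifies $B(x)$ from $Y\uhr x$; the ``unique string consistent with the DNC exclusions'' step you describe has no mechanism behind it, and as written it would also exclude the final value of $B\uhr x$.

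There is a second, independent error: you ``reserve a computable \emph{infinite} pool $E_x$ of indices $e$ with $h(e)\le x$,'' justifying this by $h$ being nondecreasing with $h(e)\ge e$. But $h(e)\ge e$ and $h(e)\le x$ together force $e\le x$, so $E_x$ is finite of size at most $x+1$ — and in general far smaller (logarithmic if $h$ grows exponentially). On top of this, the indices usable for $\psi$ are constrained by the recursion theorem to a sparse computable set, shrinking the pool further. So the calibration you flag as the ``main obstacle'' is not merely delicate: the resources you assume do not exist. The paper's detour through $K(Y\uhr n)\gep 2h(n)$ and the Machine Existence Theorem packages exactly this budget arithmetic in a form that avoids explicit pool management, which is why that route succeeds.
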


 \begin{proof} By \cite{Kjos.ea:2005} (also see \cite[4.1.10]{Nies:book}) there is an order function $h$ such that $2h(n) \lep K(Y\uhr n)$ for each $n$. 
The argument of the present  proof goes back to \Kuc's injury free solution to Post's problem (see \cite[Section 4.2]{Nies:book}). Our proof is phrased in  the language of cost functions,   extending  the similar result in \cite{Greenberg.Nies:11} where  $Y$ is ML-random   (equivalently, the condition above holds with $h(n)= \lfloor n/2 \rfloor +1$.   

 Let $\seq {Y_s}$ be a computable approximation via which $Y$ is $\omega$-c.e. To help with  building a reduction procedure for   $A \le_{ibT} Y$, via the Machine Existence Theorem  we give  prefix-free  descriptions of initial segments $Y_s\uhr e$. On input $x$,  if   at a   stage  $s>x$, $e$ is least such that  $Y(e)$ has changed  between stages $x$ and $s$, then we  still  hope  that    $Y_s \uhr e$  is   the final version of $Y \uhr e$.  So whenever $A(x)$ changes at such a stage $s$, we  give a description of  $Y_s\uhr e$   of length $h(e)$.  By hypothesis   $A$ is strongly jump traceable,  and hence obeys each  benign cost function.   We define an appropriate benign cost function  $\cc$  so that a set  $A $  that obeys~$\cc$     changes little enough that   we can provide all the descriptions  needed.

  To ensure that   $A \le_{ibT} Y$,  we define a computation  $ \Gamma(Y \uhr x) $  with output $A(x)$ at  the least  stage  $t\ge x $ such that $Y_t\uhr x$ has the final value.  If  $Y$ satisfies the hypotheses of the theorem, $A(x) $ cannot change at any stage  $s> t$ (for almost all $x$), for otherwise $Y \uhr e$ would receive  a description of length $h(e)+O(1)$,  where $e$ is least such that 
  $Y(e)$ has changed  between  $x$ and  $s$.
   
We give the details. Firstly  we give a definition of  a   cost
 function $\cc$ which generalizes the definition in (\ref{eqn: cZ}).   Let   $ \cc(x,s) = 0 $ for each $x\ge s$.  If  $x<s$,  and~$e< x$ is
 least such that
$Y_{s-1} ( e) \neq Y_s  ( e)$,     let 
 \begin{equation} \label{eqn:defn of cY}  \cc(x,s) =
\max( \cc(x,s-1), \tp{-h(e)} ). \end{equation} 
Since $Y$ is $\omega$-c.e.,  $\cc$ is benign. Thus each strongly jump traceable  c.e.\ set obeys $\cc$ by the main result in  \cite{Greenberg.Nies:11}. So  it suffices to show that $A \models \cc$ implies $A \le_{ibT} Y$ for any set $A$. Suppose that $\cc{\seq{A_s}}  \le 2^u$. Enumerate a bounded request set~$L$ as follows. When $A_{s-1}(x)\neq A_s(x)$ and $e$ is least such that $e=x$ or  $Y_{t-1} ( e) \neq Y_t  ( e)$ for some $t \in [x,s)$, put the  request $\la u+ h(e), Y_s \uhr e \ra$ into $L$. Then $L$ is indeed a bounded request set.

Let $d$ be a  coding constant for $L$ (see \cite[Section 2.3]{Nies:book}). Choose $e_0$ such that $h(e) + u+ d  < 2h(e)$ for each $e \ge e_0$.  Choose $s_0\ge e_0$ such that $Y \uhr{e_0}$ is stable from stage $s_0$ on.

To show $A \le_{ibT} Y$, given an input $x\ge s_0$, using~$Y$ as an oracle,  compute $t >x$ such that $Y_t \uhr x =Y \uhr x$.
We claim that  $A(x) = A_t(x)$. Otherwise  $A_{s}(x) \neq A_{s-1}(x)$ for some $s > t$. Let $e \le x$ be the  largest number such that $Y_r\uhr e = Y _t \uhr e$ for all~$r$ with  $t < r \le s$.  If $e <  x$  then   $Y(e)$ changes in the  interval $(t,s]$ of stages.  Hence, by the choice of $t\ge s_0$, we cause $K(y)  < 2h(e)$ where  $y= Y_t \uhr {e} = Y\uhr {e}$, contradiction.   \end{proof}
\begin{example} {\rm  For each order function $h$ and constant $d$,  the class
\bc $P_{h, d }= \{Y \colon \, \fao n 2h(n) \le K(Y\uhr n)+d \}$ \ec
is $\PPI$. Thus,  by the foregoing proof, each strongly jump traceable c.e.\ set is \emph{ibT} below each $\omega$-c.e.\ member of $P_{h,d}$. } \end{example}

We discuss  the foregoing  Theorem~\ref{thm:Kuc-g}, and relate it   to results in \cite{Greenberg.Hirschfeldt.ea:12,Greenberg.Nies:11}. 

\vsp

\n 1.  In \cite[Thm 2.9]{Greenberg.Hirschfeldt.ea:12}  it is shown that given a non-empty $\PPI$ class $P$, each  jump traceable set $A$ Turing below each superlow member of $P$ is already  strongly jump traceable. In particular this applies to  superlow c.e.\ sets $A$, since such sets are jump traceable \cite{Muenster}.  For many  non-empty $\PPI$ classes   such a set is in fact computable. For instance, it  could be a class where any two distinct members form a minimal pair.   In contrast, the nonempty among the  $\PPI$ classes $P=P_{h,d}$ are  examples where being below each superlow (or $\omega$-c.e.) member   characterizes strong jump traceability for c.e.\ sets.

\vsp

 \n 2.  Each superlow set $A$ is weak truth-table below {\it some}  superlow set  $Y$ as in  the hypothesis of Theorem~\ref{thm:Kuc-g}. For let $P$ be the class of $\{0,1\}$-valued d.n.c.\  functions.  By \cite[1.8.41]{Nies:book} there is a set $Z \in P$ such that $(Z \oplus A)' \ltt A'$. Now let $Y = Z \oplus A$. This contrasts with the case of   ML-random covers: if a c.e.\ set $A$ is   not $K$-trivial,   then each ML-random set Turing above $A$ is already Turing above $\Halt$ by \cite{Hirschfeldt.Nies.ea:07}.  Thus,  in the case of {\it ibT} reductions,  Theorem~\ref{thm:Kuc-g} applies to more oracle sets $Y$ than    \cite[Prop.\ 5.2]{Greenberg.Nies:11}.  

\vsp

\n 3. 
Greenberg and Nies \cite[Prop.\ 5.2]{Greenberg.Nies:11}  have shown that   for  each order function~$p$, each  strongly jump traceable c.e.\ set is Turing below below each $\omega$-c.e.\ ML-random set, via a reduction with  use  bounded by $p$.
We could also strengthen Theorem~\ref{thm:Kuc-g} to yield such a  ``$p$-bounded'' Turing reduction. 

\subsection{A proper implication between cost functions}
In this subsection we study  a weakening of $K$-triviality using the monotonic cost function
\[ \cc_{\max }  (x, s) = \max \{\tp{-K_s(w)} \colon \, x < w \le s \}.\]
Note that $\cc_{\max }$ satisfies  the limit condition, because  \bc $\ul \cc_{\max }(x) =\max \{\tp{-K(w)} \colon \, x < w  \}  $.  \ec

Clearly $\cc_{\max } (x,s) \le \cost(x,s) $, whence  $\cost \ria \cc_{\max }$. We will show that this implication of cost functions is proper.  Thus, some set obeys $\cc_{\max } $ that is not $K$-trivial.

Firstly, we investigate   sets obeying $\cc_{\max }$. 	For a string $\aaa$, let $g(\aaa)$ be the longest prefix of $\aaa$ that ends in $1$, and $g(\aaa)= \estring$ if there is no such prefix.

\begin{definition} {\rm  We say that a set $A$ is \emph{weakly $K$-trivial} if \bc $\fao n [ K(g(A\uhr n))\lep K(n)]$. \ec } \end{definition}

Clearly, every $K$-trivial set is weakly $K$-trivial. By the following, every \emph{c.e.}\ weakly $K$-trivial  set is already $K$-trivial.

\begin{fact} If $A$ is weakly $K$-trivial and  not h-immune, then $A$ is $K$-trivial. \end{fact}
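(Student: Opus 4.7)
If $A$ is finite, then $A$ is computable, hence trivially $K$-trivial, so assume $A$ is infinite. The plan is to use non-h-immunity to produce, for every $n$, a computable length $m = h(n) \ge n$ such that $A \uhr n$ is a prefix of $g(A \uhr m)$, and then invoke weak $K$-triviality at the length $m$ together with the computability of $h$.

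Concretely, since $A$ is not h-immune and infinite, fix a computable function $f$ that dominates the principal function $p_A$. Put $h(n) = f(n)+1$; then $p_A(n) \in A \cap [n, h(n))$ for almost all $n$, so there is some $k \in [n, h(n))$ with $A(k)=1$. Consequently $g(A \uhr{h(n)})$ ends in $1$ at a position $\ge n$, so $|g(A \uhr{h(n)})| > n$ and $A \uhr n$ is precisely the length-$n$ prefix of $g(A \uhr{h(n)})$. Weak $K$-triviality applied at the length $h(n)$ gives $K(g(A \uhr{h(n)})) \lep K(h(n))$, and the computability of $h$ yields $K(h(n)) \lep K(n)$. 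Thus a prefix-free description of $g(A \uhr{h(n)})$ of length $\lep K(n)$ exists and contains $A \uhr n$ as its initial segment.

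The remaining step is to convert these descriptions into a bounded request set so that $K(A \uhr n) \lep K(n)$, rather than merely $\lep 2 K(n)$ (which is what one would get by separately coding $n$ and $g(A \uhr{h(n)})$). Via the Machine Existence (Kraft--Chaitin) Theorem, for each $n$ enumerate a single request $\langle K_s(h(n)) + O(1), \, (g(A_s \uhr{h(n)})) \uhr n \rangle$ at stages $s$ where $K_s(h(n))$ drops or the current candidate prefix changes. Because $h$ is an injective computable function, each $n$ corresponds to a unique $m = h(n)$, so the leading geometric contribution to the total weight is $\sum_n 2^{-K(h(n))} \lep \sum_n 2^{-K(n)} < \infty$; no separate code for $n$ is needed, since $n = h^{-1}(m)$ is recovered from $m$. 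This bounds $K_N(A \uhr n) \lep K(h(n)) \lep K(n)$ for the resulting machine $N$, so $A$ is $K$-trivial.

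The main obstacle is the enumeration bookkeeping: one needs $A$ to admit a computable approximation (so $A \in \DII$; for the intended applications such as the infinite c.e.\ case this is immediate), and one must arrange that the changes in $g(A_s \uhr{h(n)})$ along the approximation contribute only a bounded multiplicative factor to the weight, so that the total weight $\sum_n 2^{-K(n)}$ stays finite. This is analogous to the bookkeeping in standard cost function constructions and is the only delicate part of the argument.
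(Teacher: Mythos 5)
Your starting point — using non-h-immunity to arrange that $A\uhr n$ is a prefix of $g(A\uhr{h(n)})$, then invoking weak $K$-triviality at the covering length — is the same basic idea as the paper's proof. But your route for upgrading this to $K$-triviality has a genuine gap, and is also more involved than necessary.

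The gap is the weight bound for your bounded request set. You enumerate a fresh request of weight roughly $2^{-K_s(h(n))}$ every time the candidate prefix $A_s\uhr n$ changes. For a general $\DII$ set the number of such changes is unbounded in $n$, and nothing in the hypotheses (weak $K$-triviality plus non-h-immunity) controls $\sum_n (\text{number of changes at } n)\cdot 2^{-K(h(n))}$. You flag this as ``the only delicate part'' and compare it to cost-function bookkeeping, but that is precisely the point: you would need a cost-function-style bound on the changes, and the hypotheses do not supply one. As written the total weight of your request set is not finite, so the Machine Existence Theorem does not apply. A further, smaller, issue: your construction presupposes $A\in\DII$, which the statement does not assume.

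The paper sidesteps all of this by never building a machine. It takes an increasing computable $p$ with $[p(n),p(n+1))\cap A\neq\ES$ and reads off the chain $K(A\uhr{p(n)})\lep K(g(A\uhr{p(n+1)}))\lep K(p(n+1))\lep K(p(n))$. The first inequality is exactly where the staggering $p(n)$ versus $p(n+1)$ earns its keep: since $p(n)<|g(A\uhr{p(n+1)})|\le p(n+1)$ and $p$ is strictly increasing, the cut point $p(n)$ is computable from the \emph{length} of $g(A\uhr{p(n+1)})$ alone, so $A\uhr{p(n)}$ is a computable function of $g(A\uhr{p(n+1)})$ and no extra coding of $n$ is needed. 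In your version, by contrast, $n$ is generally \emph{not} recoverable from $|g(A\uhr{h(n)})|$, because several $n$ can satisfy $n<|g(A\uhr{h(n)})|\le h(n)$; this is what pushes you into a machine construction in the first place. Finally the paper invokes the cited exercise that $K(A\uhr{p(n)})\lep K(p(n))$ for a computable increasing $p$ already implies $K$-triviality, so no approximation or weight tracking is ever needed, and the argument works for arbitrary $A$. If you want to repair your proof, replace the uniform $h(n)$ by the staggered $p(n),p(n+1)$ and drop the request-set construction in favor of the inequality chain plus the cited lemma.
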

	
	\begin{proof} By the second hypothesis, there is an increasing computable function~$p$ such that $ [p(n), p(n+1)) \cap A \neq \ES $ for each~$n$.  Then 
		\bc $K(A \uhr {p(n)})\lep K(g(A \uhr {p(n+1)}))\lep K(p(n+1))\lep K(p(n))$. \ec
 This implies that $A$ is $K$-trivial by \cite[Ex.\ 5.2.9]{Nies:book}. \end{proof} 
		We say that a computable approximation $\seq {A_s} \sN s$ is \emph{erasing} if for each~$x$ and each  $s>0$, $A_s(x) \neq A_{s-1}(x)$ implies $A_s(y)= 0 $ for each $y$ such that  $x< y \le s$.  For instance, the computable approximation built in the proof of the implication ``$\RA$'' of Theorem~\ref{thm:imply} is erasing by the construction. 

		\begin{prop} \label{prop:CMax_implies_weakly_K-trivial} Suppose $\seq {A_s} \sN s$ is an erasing computable approximation of a set $A$, and $\seq {A_s} \models \cc_{\max }$. Then $A$ is weakly $K$-trivial. \end{prop}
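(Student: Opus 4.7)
The plan is to construct a bounded request set $L$ via the Machine Existence Theorem so that, for some fixed constant $c$ and almost every $n$, the pair $\la K(n)+c,\, g(A\uhr n)\ra$ lies in $L$.  This immediately yields $K(g(A\uhr n))\le K(n)+O(1)$, i.e., $A$ is weakly $K$-trivial.

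I enumerate $L$ at each stage $s$ using two rules.  Rule (A): for every $n\le s$ with $K_s(n)<K_{s-1}(n)$, add $\la K_s(n)+c,\; g(A_s\uhr n)\ra$.  Rule (B): if $x_s<s$ is the least position of change at stage $s$, pick $n^*_s\in(x_s,s]$ minimising $K_s$ (so that $\tp{-K_s(n^*_s)}=\cc_{\max}(x_s,s)$) and add the \emph{single} request $\la K_s(n^*_s)+c,\; g(A_s\uhr n^*_s)\ra$.  The erasing hypothesis is crucial in (B): because $A_s(y)=0$ for $x_s<y\le s$, the value $g(A_s\uhr n)=g(A_s\uhr{x_s+1})$ is a common string $\beta_s$ for every $n\in(x_s,s]$, so one request per change stage suffices.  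Rule (A) contributes total weight at most $2\tp{-c}\sum_n \tp{-K(n)}\le 2\tp{-c}$ by a geometric sum over the strictly decreasing values $K_s(n)$; rule (B) contributes $\tp{-c}\cdot\cc_{\max}\seq{A_s}<\infty$ by the hypothesis $\seq{A_s}\models\cc_{\max}$.

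For correctness, fix $n$ large enough that some erasing change occurs at a position $<n$ (otherwise $g(A\uhr n)=g(A_0\uhr n)$ is computable from $n$ and the conclusion is trivial).  Let $s^*$ be the last such stage and $s_0$ the first stage with $K_{s_0}(n)=K(n)$; note $s_0>n$.  If $s_0>s^*$, then no change at a position $<n$ happens after $s^*$, so $A_{s_0}\uhr n=A\uhr n$ and rule (A) enumerates $\la K(n)+c,\; g(A\uhr n)\ra$.  If $s_0\le s^*$, then $s^*\ge s_0>n$ gives $n\in(x_{s^*},s^*]$, $K_{s^*}(n)=K(n)$, and $\beta_{s^*}=g(A\uhr n)$; rule (B) at stage $s^*$ therefore enumerates $\la K_{s^*}(n^*_{s^*})+c,\; g(A\uhr n)\ra$ with $K_{s^*}(n^*_{s^*})\le K_{s^*}(n)=K(n)$.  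Either way $K(g(A\uhr n))\le K(n)+O(1)$.

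The main subtlety is that neither rule alone suffices: rule (A) at stage $s_0$ may have the wrong second coordinate when $s_0\le s^*$ (i.e., $K$ stabilises before $A$), and rule (B) at stage $s^*$ may produce an overlong request when $s_0>s^*$ (i.e., $A$ stabilises before $K$).  These failure modes are exactly complementary—one of the two orderings of $s_0$ and $s^*$ always holds—so the matching rule supplies the request we need.
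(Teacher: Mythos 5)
Your proposal is correct and follows essentially the same route as the paper's proof: the paper builds exactly the bounded request set you describe, with a request for each decrease of $K_s(n)$ (your Rule (A)) and a request of weight $\tp{-c}\cc_{\max}(x,s)$ at each change stage with second coordinate $g(A_s\uhr{x_s+1})$ (your Rule (B), identical by the erasing observation), and concludes the weight bound from $\seq{A_s}\models\cc_{\max}$ and Kraft--Chaitin. The only difference is that you spell out the case split on whether $K(n)$ or $A\uhr n$ stabilises first, which the paper leaves implicit in the phrase ``since $\seq{A_s}$ is erasing, this bounded request set shows that $A$ is weakly $K$-trivial.''
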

	\begin{proof} This is a modification of the usual proof that every set $A$ obeying $ \cost$ is $K$-trivial (see, for instance, \cite[Thm.\  5.3.10]{Nies:book}). 
		
		To show that  $A$ is weakly $K$-trivial,  one  builds a bounded request set $W$. When at stage $s>0$ we have $r = K_s(n) < K_{s-1}(n)$,   we  put the  request  $\la r+1, g(A\uhr n)\ra$ into $W$. When $A_s(x) \neq A_{s-1}(x)$,    let $r$ be the number such that $\cc_{\max }(x,s)  = \tp{-r}$, and put the request $\la r+1, g(A\uhr {x+1})\ra$ into $W$.
		
		 Since the computable approximation $\seq {A_s} \sN s$ obeys $\cc_{\max }$, the set $W$ is indeed a bounded request set; since $\seq {A_s} \sN s$ is erasing, this bounded request set shows that $A$ is weakly $K$-trivial. 
	\end{proof}

We now prove that 	$\cc_{\max } \not \ria \cost$. We do so via proving a reformulation that is of interest by itself.

	\begin{thm} \label{thm:cmax_cK_separation}  For every $b \in \NN$ there is an $x$ such that $\ulcost(x) \ge \tp b \ul \cc_{\max }(x)$. In other words,  \bc 	$ \sum \{\tp{-K(w)} \colon \, x < w  \}  \ge \tp b \max \{\tp{-K(w)} \colon \, x < w  \}  $. \ec  \end{thm}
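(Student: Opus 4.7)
My plan is to argue by contradiction. Suppose $b\in\NN$ satisfies $\ulcost(x) < 2^b\,\ul\cc_{\max}(x)$ for every $x$, and set $C:=2^b$. For each $n$ let $x_n:=\max\{w : K(w)\le n\}$, which is finite by Kraft's inequality and satisfies $K(x_n)\le n$. Every $w>x_n$ has $K(w)\ge n+1$, so $\ul\cc_{\max}(x_n)\le 2^{-(n+1)}$, and the hypothesis applied at $x=x_n$ gives
\[
\bar\Omega-\Omega^*_n \;<\; C\cdot 2^{-(n+1)}, \qquad\text{where } \bar\Omega:=\sum_w 2^{-K(w)},\ \Omega^*_n:=\sum_{w\le x_n} 2^{-K(w)}.
\]
So $\bar\Omega$ is approximated from below by $\Omega^*_n$ at the fast effective rate $C\cdot 2^{-(n+1)}$.

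The second step is to note that $\bar\Omega$ is Martin-L\"of random. Apply the Machine Existence Theorem to the c.e.\ bounded request set $\{(K(w),w):w\in\NN\}$ (of total weight $\bar\Omega\le 1$): it yields a prefix-free machine $M$ with $\Omega_M=\bar\Omega$, and optimality of the universal machine $U$ forces $K_M=K_U+O(1)$, so $M$ is weakly universal and $\bar\Omega=\Omega_M$ is ML-random. In particular $K(\bar\Omega\uhr m)\ge m-O(1)$ for all $m$.

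The third step derives a contradiction by producing short prefix-free descriptions of initial segments of $\bar\Omega$ along a sparse sequence of scales. For each $k$ take $n=2^k$: the tail bound gives $|\bar\Omega-\Omega^*_n|<2^{-(m+1)}$ with $m:=n-b-2$, so $\bar\Omega\uhr m$ agrees with $\Omega^*_n\uhr m$ up to the last bit. I would enumerate, for each $k$, a single bounded-request pair of length $k+O(1)$ whose output is (the eventually stable) $\Omega^*_{2^k}\uhr{2^k-b-2}$; the weights $\sum_k 2^{-k-O(1)}$ are summable, so the Machine Existence Theorem gives a prefix-free machine witnessing $K(\bar\Omega\uhr m)\le \log_2 m+O(\log\log m)$ for the infinite sequence $m=2^k-b-2$, contradicting the ML-random lower bound for large $k$.

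The main obstacle is the bookkeeping inside step three: since $x_n$ is only $\emptyset'$-computable, the output of the Kraft-Chaitin request is not immediately available, and we must generate it c.e.-ly while respecting the bounded-request discipline. The key auxiliary device is to watch, at stage $s$, the joint quantities $\bar\Omega_s$ and $\alpha_{n,s}:=\sum_{w\le x_{n,s}} 2^{-K_s(w)}$; the hypothesis guarantees that eventually $\bar\Omega_s-\alpha_{n,s}\le 2C\cdot 2^{-(n+1)}$, a condition that is computably checkable and at which the first $n-b-2$ bits of $\alpha_{n,s}$ are provably stable. Equivalently, and perhaps more cleanly, one can reformulate the descriptions via the telescoping increments $\Omega^*_n-\Omega^*_{n-1}\le C\cdot 2^{-n}$ and feed them into the additive cost-function formalism of Section~\ref{s:additive cf} (Proposition~\ref{prop:heyheyey}), using the fact that an additive cost function's enumeration of a left-c.e.\ real doubles as a bounded request set.
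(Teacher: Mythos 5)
Your route — reduce to the ML-randomness of $\bar\Omega=\sum_w 2^{-K(w)}$ and then try to compress its initial segments along a sparse sequence of scales — is genuinely different from the paper's, which runs a direct combinatorial game: via the recursion theorem one builds a bounded request set $L$ and a sequence $x_0<\cdots<x_N$, and an inductive claim shows the opponent's responses would push $\sum_w 2^{-K_s(w)}$ above $1$. Your first two steps contain a small but real misstatement: $\{(K(w),w):w\in\NN\}$ is not a c.e.\ bounded request set (as $K$ is not computable), and if you replace it by the stage-wise requests $(K_s(w),w)$ issued whenever $K_s(w)<K_{s-1}(w)$, the resulting machine $M$ has $\Omega_M$ close to $2\bar\Omega$ rather than $\bar\Omega$, so the equality $\Omega_M=\bar\Omega$ you invoke fails. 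The conclusion that $\bar\Omega$ is ML-random is nonetheless true — for instance, from $\bar\Omega\uhr n$ one can recover all $w$ with $K(w)\le n-2$ and hence $x_{n-2}$, giving $n-2<K(x_{n-2}+1)\lep K(\bar\Omega\uhr n)$ — so this part is fixable.

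The serious gap is in step three. Both $\bar\Omega_s$ and $\alpha_{n,s}$ are non-decreasing in $s$, but the stopping condition $\bar\Omega_s-\alpha_{n,s}\le 2C\cdot 2^{-(n+1)}$ compares $\alpha_{n,s}$ to the \emph{current} $\bar\Omega_s$, not to $\bar\Omega$, and so yields no bound whatever on $\bar\Omega-\alpha_{n,s}$ or on $\Omega^*_n-\alpha_{n,s}$ at that moment. Concretely, at the first stage the check passes, $x_{n,s}$ may still be far below $x_n$ and $\alpha_{n,s}$ far below $\Omega^*_n$; its first $m$ bits need not agree with those of $\bar\Omega$ and can move later. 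Thus the single KC request you issue at that stage may output the wrong string, while issuing a fresh request every time the check re-passes (it can pass, fail, and pass again, since the difference is not monotone) breaks the weight accounting. Your alternative suggestion — pass the telescoping increments $\Omega^*_n-\Omega^*_{n-1}$ to Proposition~\ref{prop:heyheyey} — is not carried out, and on its face inherits the same difficulty: the enumeration of those increments is again only left-c.e.\ with no computable certificate of having converged. The central missing ingredient is a computable, weight-bounded criterion certifying that $\alpha_{n,s}\uhr m$ has reached its final value; without that, the compression of $\bar\Omega$ does not go through.
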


		By Theorem~\ref{thm:imply}, the statement of the foregoing Theorem
		is equivalent to  $\cc_{\max } \not \rightarrow \cost$.  Thus, as remarked above, some set $A$  obeys $\cc_{\max }$ via an erasing computable approximation, and does not obey $\cost$. By Proposition~\ref{prop:CMax_implies_weakly_K-trivial} we obtain a separation. 
		
		\begin{cor} Some  weakly $K$-trivial  set fails to be $K$-trivial. \end{cor}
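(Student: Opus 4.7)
The plan is to stitch together Theorem~\ref{thm:cmax_cK_separation}, Theorem~\ref{thm:imply}, and Proposition~\ref{prop:CMax_implies_weakly_K-trivial}, plus the characterization of $K$-triviality by $\cost$. Theorem~\ref{thm:cmax_cK_separation} asserts that $\exists N\, \forall x\,[N\,\ul\cc_{\max}(x) > \ulcost(x)]$ fails. By Theorem~\ref{thm:imply} this is exactly the failure of the right-hand side of the equivalence for the pair $(\cc_{\max},\cost)$, so $\cc_{\max}\not\rightarrow\cost$. Thus there is a $\DII$ set $A$ and a computable approximation $\seq{A_s}$ with $\seq{A_s}\models\cc_{\max}$ while no computable approximation of $A$ obeys $\cost$.

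For the next step I need the witness $A$ to come with an \emph{erasing} computable approximation, since this is the hypothesis of Proposition~\ref{prop:CMax_implies_weakly_K-trivial}. Here I would simply inspect the construction used in the ``$\RA$'' direction of Theorem~\ref{thm:imply}: whenever a requirement $R_e$ acts at a stage $s$ by flipping $A(x)$, that construction also explicitly sets $A_s(y)=0$ for all $y$ with $x<y<s$. This is exactly the erasing condition, and the paper already flags this immediately before Proposition~\ref{prop:CMax_implies_weakly_K-trivial}. No extra work is needed beyond pointing to this feature of the construction.

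With the erasing approximation $\seq{A_s}$ in hand, Proposition~\ref{prop:CMax_implies_weakly_K-trivial} yields that $A$ is weakly $K$-trivial. On the other hand, $A$ does not obey $\cost$, so by the characterization of $K$-triviality via the standard cost function (recalled in Subsection~\ref{ss:standard_costfunction}: $A$ is $K$-trivial iff $A\models\cost$), $A$ is not $K$-trivial. This gives the desired separation.

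The main substantive step is really Theorem~\ref{thm:cmax_cK_separation}, already proved. The corollary itself is a short assembly argument, and the only point that needs attention is the verification that the $A$ produced by the proof of Theorem~\ref{thm:imply} admits an erasing approximation; this is visible directly from the ``let $A_s(y)=0$ for $x<y<s$'' step in that construction, so there is no real obstacle.
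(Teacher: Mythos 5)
Your proposal is correct and follows essentially the same route as the paper: apply Theorem~\ref{thm:cmax_cK_separation} together with Theorem~\ref{thm:imply} to conclude $\cc_{\max}\not\rightarrow\cost$, note that the set $A$ produced by the ``$\RA$'' direction of Theorem~\ref{thm:imply} comes with an erasing approximation (because of the ``set $A_s(y)=0$ for $x<y<s$'' step), invoke Proposition~\ref{prop:CMax_implies_weakly_K-trivial} to get weak $K$-triviality, and use the characterization $K$-trivial $\Leftrightarrow$ obeys $\cost$ to conclude $A$ is not $K$-trivial. This is exactly the assembly the paper carries out in the paragraph preceding the corollary.
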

		Melnikov and Nies~\cite[Prop.\ 3.7]{Melnikov.Nies:12} have given an alternative proof of the preceding result by constructing a weakly $K$-trivial set that is Turing complete.
		\begin{proof}[Proof of Theorem~\ref{thm:cmax_cK_separation}] Assume that there is $b \in \NN$ such that \bc  $\fao x  [\ulcost(x) < \tp b \ul \cc_{\max }(x)]$. \ec
			  To obtain a contradiction, the idea  is that $\cost(x, s)$, which is  defined as a sum,  can be made large in many small bits; in contrast,  $\cc_{\max }(x,s)$, which depends  on the value $\tp{-K_s(w)}$ for a single $w$, cannot.
			
			  We will define a sequence $0=x_0 < x_1 < \ldots < x_N$ for a certain   number $N$. When $x_v$ has been defined for $v<N$, for a certain stage $t> x_v$ we cause  $\cost (x_v,t)$ to exceed  a  fixed   quantity  proportional to $1/N$. We wait until  the opponent  responds  at a stage $s > t$   with  some $w> x_v $  such that  $\tp{-K_s(w)}$ corresponding to that quantity. Only then, we  define $x_{v+1}=s$. For us, the cost $\cost(x_i,x_j)$ will accumulate for $i<j$, while the opponent has to provide  a new~$w$ each time. This means that eventually he will run out of space in the domain of the prefix-free machine giving   short descriptions of such  $w$'s.
			
In the formal construction, we will  build a bounded request set $L$ with the purpose to cause $\cost(x,s)$ to be large  when it is  convenient to us. We may assume by the recursion theorem that the  coding constant for $L$ is given in advance (see \cite[Remark 2.2.21]{Nies:book} for this standard argument). Thus, if we put a request $\la n, y+1 \ra$ into $L$ at a stage $y$, there will be a stage $t>y$ such that $K_t(y+1 ) \le n+d$, and hence $\cost(x,t) \ge \cost (x,y) + \tp{-n-d}$. 
	
Let $\k= \tp{b+d+1}$. Let $N= \tp \k$.

\vsp

\n \emph{Construction of $L$ and a sequence $0=x_0 < x_1 < \ldots < x_N$ of numbers.} 

\vsps 

\n Suppose $v< N$ and $x_v$ has already  been defined. Put $\la \k, x_v+1 \ra$ into $L$. As remarked above, we may wait for a stage $t>x_v$ such that  $\cost(x_v, t)\ge \tp{-\k-d}$. Now, by our assumption,  we have $\ulcost(x_i) < \tp b \ul \cc_{\max }(x_i)$ for each $i \le v$. Hence we can wait for a stage $s> t$ such that 
\begin{equation} \label{eqn:wait_cost_bounded} 
	\fa i \le v \,  \exo w \big [ x_i < w \le s \lland \cost(x_i, s ) \le \tp{b  -K_s(w)}]. 	\end{equation} 
	
	Let $x_{v+1}=s$.  This ends the construction.
	
	\vsp
	\verif Note that $L$ is indeed a bounded request set. Clearly we have $\cost(x_i, x_{i+1}) \ge \tp{-\k-d}$ for each $i<N$. 
	
	\begin{claim} \label{claim:Dali}  Let $r\le \k$. Write $R = \tp r$. Suppose $p+R \le N$. Let $s = x_{p+R}$. Then we have 
	
	\begin{equation} \label{eqn:claim_ineq} \sum_{w = x_p+1}^{x_{(p+R)}}\min (\tp{ - K_s(w)}, \tp{-\k-b-d+r}) \ge (r+1) \tp{-\k-b-d+r-1}.
	\end{equation} 
\end{claim}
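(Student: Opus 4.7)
My plan is to prove the claim by induction on $r$, writing $g(r) = (r+1)\tp{-\k-b-d+r-1}$ for the target lower bound.

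For the base case $r=0$ we have $R=1$ and $s = x_{p+1}$. By construction $\cost(x_p, s) \ge \tp{-\k-d}$, so applying~\eqref{eqn:wait_cost_bounded} at stage $s$ with $v = p$ and $i = p$ yields $w \in (x_p, s]$ with $\tp{-K_s(w)} \ge \tp{-b}\cost(x_p, s) \ge \tp{-\k-b-d}$. The capped contribution of this single $w$ already equals $\tp{-\k-b-d} \ge g(0)$.

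For the inductive step, fix $p$ with $p + \tp{r+1} \le N$, set $R = \tp r$, $s_1 = x_{p+R}$, $s = x_{p+2R}$, and split the level-$(r+1)$ sum at $x_{p+R}$ into halves $A_1$ (over $(x_p, x_{p+R}]$) and $A_2$ (over $(x_{p+R}, x_{p+2R}]$), each evaluated at stage $s$ under the cap $\tp{-\k-b-d+r+1}$. Applying the inductive hypothesis at $(p,r)$ yields the analogous first-half sum $A_1^0$ at the earlier stage $s_1$ under cap $\tp{-\k-b-d+r}$ with $A_1^0 \ge g(r)$; applying it at $(p+R, r)$ yields a second-half sum $A_2^0$ at stage $s$ under the same cap with $A_2^0 \ge g(r)$. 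Because $K_s \le K_{s_1}$ and the new cap is twice the old, a short case analysis shows $A_1 \ge A_1^0$ and $A_2 \ge A_2^0$ term-by-term, so $A_1 + A_2 \ge 2g(r) = (r+1)\tp{-\k-b-d+r}$. It remains to find an additional $\tp{-\k-b-d+r}$ to reach $g(r+1) = (r+2)\tp{-\k-b-d+r}$.

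For this, using $K_s \le K_{x_{j+1}}$ for each $j$ in range, the $2R$ subintervals assemble to
\begin{equation*}
\cost(x_p, s) \ge \sum_{j=p}^{p+2R-1} \cost(x_j, x_{j+1}) \ge 2R \cdot \tp{-\k-d} = \tp{-\k-d+r+1}.
\end{equation*}
Applying~\eqref{eqn:wait_cost_bounded} at stage $s$ with $v = p+2R-1$ and $i = p$ then produces $w^* \in (x_p, s]$ with $\tp{-K_s(w^*)} \ge \tp{-\k-b-d+r+1}$. The element $w^*$ lies in exactly one of the two halves; in that half, its contribution to the level-$(r+1)$ sum attains the new cap $\tp{-\k-b-d+r+1}$, whereas its contribution to the matching IH sum is bounded by the old cap $\tp{-\k-b-d+r}$. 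Hence that half exceeds its IH lower bound by at least $\tp{-\k-b-d+r}$, giving $A_1 + A_2 \ge 2g(r) + \tp{-\k-b-d+r} = g(r+1)$, as required.

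The main subtlety is avoiding double-counting of $w^*$, since it may already have been used to realize the IH bound on its half. This is handled automatically by the term-by-term nature of $A_j \ge A_j^0$: a pointwise improvement at the single index $w^*$ passes directly to the total, so the extra $\tp{-\k-b-d+r}$ gained from $w^*$ adds on top of the IH lower bound for its half.
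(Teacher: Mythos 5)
Your proof is correct and takes essentially the same route as the paper: induction on $r$, with the base case producing a single heavy $w$ via \eqref{eqn:wait_cost_bounded}, and the inductive step splitting the interval $(x_p, x_{p+2R}]$ in half, invoking the inductive hypothesis on each half, and then exploiting the extra $w^*$ whose cut-off doubles when the cap rises from $\tp{-\k-b-d+r}$ to $\tp{-\k-b-d+r+1}$. You are somewhat more explicit than the paper about the two monotonicity facts that make the term-by-term comparison work (the stage advance $K_s \le K_{s_1}$ and the cap increase), and about why the gain at $w^*$ adds on top of, rather than overlapping, the inductive lower bound; the underlying argument is the same.
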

\n For $r= \k$, the right  hand side equals $(\k+1) \tp{-(b+d+1)}>1$, which is a contradiction because the left hand side is at most $\Om \le 1$. 

\vsp

We prove the claim by induction on $r$. 
To verify the case $r=0$, note that by (\ref{eqn:wait_cost_bounded}) there is $w \in (x_p, x_{p+1}]$  such that $\cost(x_p, x_{p+1}) \le \tp {b - K_s(w)}$. Since $\tp{-\k-d} \le \cost(x_p, x_{p+1})$,  we obtain \bc  $\tp{-\k - b - d} \le \tp{-K_s(w)}$ (where $s= x_{p+1}$). \ec  Thus the left hand side in the inequality (\ref{eqn:claim_ineq}) is at least $\tp{-\k - b - d}$, while the right hand side equals $\tp{-\k - b - d-1}$, and the claim holds for $r=0$.  

\vsps

In the following, for $i< j \le N$,  we will write $\SS(x_i,x_j)$ for a sum of the type occurring in (\ref{eqn:claim_ineq}) where $w$ ranges from $x_i+1$ to $x_j$.

Suppose inductively the claim has been established   for $r<\k$.  To verify  the claim for $r+1$, suppose  that $p + 2R \le N$ where $R = \tp r$ as before. Let $s= x_{p+2R}$.  Since $\cost(x_i, x_{i+1}) \ge \tp{-\k-d}$, we have 

\bc $\cost(x_p,s) \ge 2R \tp{-k-d} = \tp{-\k-d+r+1}$. \ec 
By   (\ref{eqn:wait_cost_bounded}) this implies that there is $w$, $x_p< w \le s$, such that 
\begin{equation} \label{eqn:hurray_w}\tp{-\k - b - d+r+1} \le \tp{-K_s(w)}. \end{equation}
Now, in  sums of the form  $\SS (x_q,x_{q+R})$, because of taking the minimum,  the ``cut-off'' for how much $w$  can contribute  is at $\tp{-\k-b-d+r}$. Hence we have 
\bc $\SS(x_p, x_{p+2R}) \ge \tp{-\k-b-d+r} + \SS(x_p,x_{p+R}) + \SS(x_{p+R}, x_{p+2R}) $. \ec
 The additional  term $\tp{-\k-b-d+r}$ is due to the fact that $w$ contributes at most $\tp{-\k-b-d+r}$ to $ \SS(x_p,x_{p+R}) + \SS(x_{p+R}, x_{p+2R})$, but by (\ref{eqn:hurray_w}),   $w$  contributes  $\tp{-\k-b-d-r+1}$  to $\SS(x_p, x_{p+2R})$.  By the inductive hypothesis, the right hand side is at least 
\bc $\tp{-\k-b-d+r} + 2 \cdot (r+1) \tp{-\k-b-d+r-1}= (r+2) \tp{-\k-b-d+r}$, \ec  as required.
\end{proof}

\section{A cost function-related  basis theorem for $\PPI$ classes}  \label{s:costf_basis_theorem}
The following strengthens \cite[Thm 2.6]{Greenberg.Hirschfeldt.ea:12}, which relied on  the extra  assumption   that  the $\PPI$ class is contained in the ML-randoms. 
\begin{thm} Let $\P$ be a nonempty $\PPI$ class, and let~$\cc$ be a monotonic  cost function with the limit condition. Then there is a $\DII$ set $Y \in \P$ such that each c.e.\ set $A \leT Y$  obeys $\cc$. 
\end{thm}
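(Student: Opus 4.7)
The plan is a finite-injury priority construction that builds $Y$ as a $\DII$ member of $\P$ together with c.e.\ sets $B_{e,i}$ for each $\la e,i\ra\in\NN^2$. The requirement $R_{e,i}$ says: if $\Phi_e^Y=W_i$, then $W_i\models\cc$. It will be met either by diagonalization, forcing $\Phi_e^Y\ne W_i$, or by arranging $B_{e,i}=W_i$ modulo a finite initial segment, with $B_{e,i}$ enumerated so as to obey $\cc$ by design. I maintain a decreasing computable sequence of nonempty $\PPI$ subclasses $\P=\Q_0\supseteq\Q_1\supseteq\cdots$, and take $Y_s$ to be a canonical branch of $\Q_s$ (for instance its leftmost path, computable from $\Halt$), so that $Y=\lim_s Y_s$ lies in $\bigcap_s\Q_s\sub\P$ and is $\DII$.

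Strategy for $R_{e,i}$: use a chunking trick with a local counter $k_{e,i}$ initialized at $0$. At stage~$s$, form $X_s=\{x\in W_{i,s}\setminus B_{e,i}:\Phi_e^{Y_s}(x)[s]\DA=1\}$. If $X_s\neq\emptyset$ and $\cc(\min X_s,s)\le 2^{-\la e,i\ra-k_{e,i}}$, enumerate all of $X_s$ into $B_{e,i}$ as a single chunk at stage~$s$ and increment $k_{e,i}$; simultaneously shrink $\Q_{s+1}$ to the cylinder preserving $Y_s\uhr u$, where $u$ bounds the uses of the relevant computations, so that the newly enumerated $x$ stay in $\Phi_e^Y$. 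By monotonicity of $\cc$ in the first argument, this chunk contributes at most $2^{-\la e,i\ra-k_{e,i}}$ to the total cost, so the total cost of $B_{e,i}$ is bounded by $\sum_{k\ge 0}2^{-\la e,i\ra-k}\le 2^{-\la e,i\ra+1}$. If instead some $x\in W_{i,s}$ has $\Phi_e^{Y_s}(x)[s]\not\DA 1$ and this persists, the strategy diagonalizes by replacing $\Q_{s+1}$ with the subclass $\Q_s\cap\{X:\Phi_e^X(x)\ne 1\}$, which is nonempty since it contains $Y_s$; this permanently forces $\Phi_e^Y(x)\ne 1=W_i(x)$, meeting $R_{e,i}$ vacuously. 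By the limit condition, $\ul\cc(x)$ is eventually below any fixed positive threshold, so the cost test succeeds for cofinitely many $x$, and the finitely many small-$x$ exceptions are handled by diagonalization or a finite adjustment of $B_{e,i}$ at the end (which does not affect $\cc$-obedience).

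The main obstacle is priority coordination: each chunk-enumeration step shrinks $\Q_{s+1}$ to a subclass compatible with the current $Y_s\uhr u$, potentially injuring lower-priority requirements that had committed to a different initial segment; conversely, a higher-priority diagonalization may invalidate previous chunk decisions. The plan is to order the requirements by $\la e,i\ra$ and show inductively that each $R_{e,i}$ is injured only finitely often, so its counter $k_{e,i}$ and its associated restrictions on $\Q_s$ stabilize. Nonemptiness of $\bigcap_s\Q_s$ is preserved throughout because every restriction is by a cylinder that contains $Y_s$, which is chosen as an extendible branch using the $\Halt$-oracle step (this is the same ingredient that makes $Y$ itself $\DII$ rather than merely low). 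Verifying these two points---that the priority interactions converge and that the resulting $Y$ is in every $\Q_s$---is the technical heart of the argument.
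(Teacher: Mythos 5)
Your overall plan shares some structure with the paper's proof (maintain a nested sequence of $\Pi^0_1$ subclasses, take $Y$ as the leftmost path, balance diagonalization against building a cheap enumeration of the would-be $A = \Phi_e^Y$), but two of the ingredients you treat as routine are precisely where the difficulty lies, and as described they do not work.

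First, this is not a finite-injury construction, and the inductive claim that each $R_{e,i}$ is injured finitely often and its counter $k_{e,i}$ \emph{stabilizes} cannot be right. If $\Phi_e^Y = W_i$ with $W_i$ infinite, then $R_{e,i}$ must act (enumerate a chunk, shrink $\Q$) infinitely often, so its counter never stabilizes and it keeps placing new restraints on the class. Meanwhile the alternative outcome ``$\Phi_e^Y \neq W_i$'' is $\Pi^0_2$ and cannot be decided at a finite stage, so a strategy cannot simply ``notice that the failure persists'' and then commit to a single diagonalization class $\Q_s \cap \{X : \Phi_e^X(x) \neq 1\}$. (Also, the nonemptiness of that class is not guaranteed by ``$\Phi_e^{Y_s}(x)[s]$ has not yet converged to $1$'': $Y_s$ is only a string, and every infinite path of $\Q_s$ through it may still yield $\Phi_e^X(x)=1$.) The paper handles exactly this $\Sigma^0_3$-level tension with a full $\emptyset''$-tree of strategies, where the outcome ``diagonalize'' ($\alpha 1$) and the outcome ``build a cheap enumeration'' ($\alpha 0$) are both represented, nodes are visited infinitely often, and the true path is determined by $\liminf$.

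Second, and more seriously, your cost-control mechanism has no argument that it ever fires. After chunk $k$ you demand $\cc(\min X_s, s) \le 2^{-\la e,i\ra - k}$ with a threshold that shrinks to $0$. Since $\cc$ is nondecreasing in $s$, once the new least uncaptured element $x^*$ is first observed in $X_s$ (which requires both $x^* \in W_{i,s}$ and $\Phi_e^{Y_s}(x^*)[s] \downarrow = 1$), it may already satisfy $\cc(x^*,s) > 2^{-\la e,i\ra - k - 1}$ and the strategy is stuck forever; then $B_{e,i}$ misses an infinite tail of $W_i$, not a finite one, and the ``finite adjustment'' fallback does not apply. The limit condition gives that $\ul\cc(x)$ is small for cofinitely many $x$ against a \emph{fixed} threshold; you need it against a threshold that you yourself keep lowering, and nothing synchronizes the two. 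The paper's proof avoids this by reversing the logic: it fixes in advance a cost partition $w^n_0 < w^n_1 < \cdots$ determined by $\cc$ alone (each interval can accumulate at most $4^{-n}$ of cost), then \emph{certifies} initial segments $A_e \uhr w$ against the class $\P^\alpha$, so that any later change of $A_e \uhr w$ forces $Y \uhr (n+d)$ to move right. Since $Y\uhr(n+d)$ can move right at most $2^{n+d}$ times, the number of $n$-level changes is bounded, and the total cost $\sum_n 4^{-n+1} 2^{n+d} = O(2^d)$ is controlled by counting rather than by a gating condition that may never be met. Without some analogue of this counting/certification mechanism, your enumeration has no upper bound on cost, and the proposal has a genuine gap.
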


\begin{proof} We may assume    that $ \cc(x, s) \ge \tp{-x}$ for each $x\le s$,  because any c.e.\ set that obeys $\cc$ also obeys the cost function  $ \cc(x,s)+ \tp{-x}$. 

Let $\seq{A_e, \Psi_e}\sN e$ be an effective listing of all pairs consisting of a c.e.\ set and a  Turing functional.  We will define a $\DII$ set $Y \in \P$ via a computable approximation ${Y_s} \sN s$,  where~$Y_s$ is a binary string of length $s$. We meet the requirements

\bc $N_e\colon \, A_e = \Psi_e(Y) \RRA A_e $ obeys $ \cc$. \ec
We use  a standard  tree construction at the $\ES''$ level.  Nodes on the tree $\strcantor$  represent the strategies. Each node  $\aaa$ of length~$e$ is a strategy for $N_e$.  At stage $s$ we define an approximation $\delta_s$ to the true path. 
We say that $s$ is an \emph{$\aaa$-stage} if $\aaa \prec \delta_s$.

Suppose that a strategy $\aaa$ is on the true path.  If $\aaa 0 $ is on the true  path,  then strategy~$\aaa$ is able to build a computable enumeration of $A_e$ via which $A_e$ obeys~$\cc$.  If $\aaa 1$ is on the true path, the  strategy shows that $A_e \neq  \Psi_e(Y)$. 

Let $\P^\estring $ be the given class $\P$.   A strategy $\aaa$ has as an environment a $\PPI$ class $\P^\aaa$. It defines  $\P^{\aaa 0} = \P^\aaa$, but usually let   $\P^{\aaa 1}$ be  a proper refinement of $\P^{\aaa}$.

Let $\aaal =e$. The length of agreement for $e$ at  a stage $t$ is $\min\{ y \colon  \, A_{e,t} (y) \neq  \Psi_{e,t} (Y_t)\}$. We say that an  $\aaa$-stage $s$ is $\aaa$-\emph{expansionary} if the length of agreement  for   $e$  at  stage~$s$ is larger than at  $u$ for all previous $\aaa$-stages $u $.

Let $w^n_0 = n$, and
 \begin{equation} \label{eqn:wi} w^n_{i+1} \simeq \mu v > w^n_i .  \,  \cc(w^n_i, v) \ge 4^{-n}. \end{equation}  
Since $\cc$ satisfies the limit condition, for each $n$ this sequence breaks off. 

Let $a= w^n_i$ be  such a value. The basic idea is to {\it certify} $A_{e,s} \uhr w$,  which  means to ensure that all $X \succ Y_s\uhr {n+d}$ on  $\PP^\aaa$ compute $A_{e,s} \uhr w$. If  $A \uhr w$ changes later  then also $Y\uhr {n+d}$ has to change. Since $Y\uhr {n+d}$ can only move to the right (as long as $\aaa$ is not initialized),  this type of change for $n$  can only contribute a cost of  $4^{-n+1} \tp{n+d} = \tp{-n+d+2}$.

By \cite[p.\ 55]{Nies:book},    from an index $\Q$ for a $\PPI$ class  in $\cantor$  we can obtain a computable  sequence $(\Q_s) \sN s$ of clopen classes such that $\Q_s \supseteq \Q_{s+1}$ and $\Q = \bigcap_s \Q_s$. In the construction below we will have several indices for $\QI$ classes $\Q$ that change over time. At stage $s$, as usual by $\Q[s]$ we denote the value of the index at stage $s$. Thus $(\Q[s])_s$ is the clopen approximation of $\Q[s]$  at  stage $s$.

\vsps

\n {\em Construction of} $Y$.

\n \emph{Stage $0$. Let $\delta_0 = \estring$ and $\P^\estring = \P$. Let $Y_0 = \estring$.}

\n \emph{Stage $s>0$.}  Let $\P^\estring = \P$. 

\n For each $\beta$ such that  $\delta_{s-1}  <_L \beta$ we initialize strategy~$\beta$.  We let $Y_s$ be the leftmost path on the current approximation to $\P^{\delta_{s-1}}$, i.e., the  leftmost string $y$ of length $s-1$ such that $[y ] \cap (\P^{\delta_{s-1}}[s-1])_{s} \neq \ES$. For each $\aaa, n$, if $Y_s \uhr {n+d} \neq Y_{s-1} \uhr {n+d} $ where $d= \init_s(\aaa) $, then we declare each existing value $w^n_i$ to be $(\aaa, n)$-\emph{unsatisfied}.

\n \emph{Substage $k$}, $0\le k <  s$. Suppose we  have already defined $\aaa = \delta_s \uhr k$.  Run strategy~$\aaa$ (defined below) at stage $s$, which defines an outcome $r \in \twoset$ and a $\PPI$ class $\P^{\aaa r}$. Let $\delta_s(k) = r$.

We now describe the strategies $\aaa$ and procedures  $\SS^\aaa_n$  they call. To initialize a strategy $\aaa$ means to  cancel the  run of this  procedure.   Let  

\bc $d= \init_s(\aaa)  = \aaal + $the last stage when $\aaa$ was initialized.  \ec

\n \emph{Strategy $\aaa$ at an $\aaa$-stage $s$.}

\n \bi \item[(a)] If no procedure for $\aaa$ is running, call procedure $\SS^\aaa_n$ with parameter~$w$,  where~$n$ is least, and $i$ is chosen least for $n$,  such that    $w= w^n_i\le s$  is not $(\aaa, n)$-satisfied.  Note that $n$ exists because $w^s_0 =s$ and this value is not  $(\aaa, n)$-satisfied at the beginning of stage $s$. 
By calling this procedure,     we attempt to certify  $A_{e,s}  \uhr w$ as discussed above. 

\item[(b)] While such a  procedure $\SS^\aaa_n$ is running,  give outcome  $  1$.  

\n (This procedure  will  define  the current  class $\P^{\aaa 1}$.)  

\item[(c)] If a   procedure $\SS^\aaa_n$ returns at this stage, goto (d).

\item[(d)]  If $s$ is  $\aaa$-expansionary, give outcome $0$,  let $\P^{\aaa 0}= \P^\aaa$,  and continue at  (a) at the next $\aaa$-stage. 
 Otherwise,  give outcome $ 1$, let   $\P^{\aaa 1}= \P^\aaa$, and stay at (d). \ei

\n \emph{Procedure $\SS^\aaa_n$ with parameter~$w$ at a stage $s$.}

\n If $n+d \ge s-1$ let $\P^{\aaa 1} = \P^\aaa$. Otherwise, let 
\begin{equation} \label{eqn:Q}   \Q  = \P^\aaa \cap \{X \succ z \colon \, \Psi^X_e \not \succ A_{e,s}\uhr w\}, 
\end{equation}
where $z = Y_{s}\uhr {n + d}$. (Note  that each time  $Y\uhr {n + d}  $ or  $A_{e}\uhr w$ has  changed, we update this definition of $\Q$.)

\bi \item[(e)] If  $\Q_s  \neq \ES$ let $\P^{\aaa 1} = \Q$. If the definition of $\P^{\aaa 1}$ has changed since the last $\aaa$-stage, then 
  each $\beta$ such that  $\aaa 1 \preceq \beta$   is  initialized.

\item[(f)] If $\Q_s= \ES$, declare $w$ to be \emph{$(\aaa, n)$-satisfied} and return. ($A_{e,s}  \uhr w$ is certified as every $X \in \P^\aaa$ extending $z$ computes $A_{e,s}  \uhr w$  via $\Psi_e$. If $A_e \uhr w$ changes later,  the necessarily $z \not \preceq Y$.)
\ei
\vsps

\begin{claim}   \label{claim:1} 
Suppose  a strategy $\aaa$ is no longer  initialized after stage $s_0$. Then for each $n$,  a  procedure $\SS^\aaa_n$ is only called finitely many times after $s_0$.
\end{claim} 
   There are only finitely many values $w=w^n_i$ because $\cc$ satisfies the limit condition.   Since $\aaa$ is not initialized after $s_0$,  $\P^\aaa$ and $d= \init_s(\aaa) $ do not change. When a run of  $\SS^\aaa_n$ is  called at a stage~$s$, the strategies $\beta \succeq \aaa1 $ are initialized, hence $\init_t(\beta) \ge s > n+d$ for all $t \ge s$.   
   So the string  $Y_s\uhr {n+d}$   is the leftmost string of length $n+d$  on $\P^\aaa$ at stage $s$. This string  has to  move to the right between the stages when  $\SS^\aaa_n$ is  called with the same parameter $w$, because  $w$  is declared $(\aaa, n)$-unsatisfied before $\SS^\aaa_n$ is called again with parameter $w$.   Thus,  procedure $\SS^\aaa_n$ can only be called $\tp{n+d}$ times with parameter $w$. 

\begin{claim} \label{claim:2}  $\seq {Y_s} \sN s$ is a computable approximation of a $\DII$ set $Y \in \P$. \end{claim} 

Fix $k\in \NN$. For a stage $s$,    if $Y_s\uhr k $ is to the left of $ Y_{s-1} \uhr k$ then there are $\aaa, n$ with  $n + \init_s(\aaa) \le k$ such that   $\P^\aaa[s] \neq \P^\aaa [{s-1}]$ because of the action of a procedure $\SS^\aaa_n$ at (e) or (f). 

There are only finitely many pairs $\aaa, s$  such that  $ \init_s(\aaa) \le k$. Thus by Claim~\ref{claim:1} there is stage $s_0$ such that at all stages   $s\ge s_0$, for no     $\aaa  $    and   $n$ with $ n+ \init_s(\aaa) \le k$, a  procedure $\SS^\aaa_n$ is called. 

While a procedure $\SS^\aaa_n$ is running with a parameter $w$, it  changes the definition of   $\P^{\aaa 1}$ only if $A_e \uhr w$ changes ($e = \aaal$), so at most $w$ times. Thus there are  only finitely many $s$ such that $Y_s\uhr k \neq  Y_{s-1} \uhr k$. 

By the definition of the computable approximation  $\seq {Y_s} \sN s$ we have $Y \in \P$. This completes Claim~\ref{claim:2}. 

As usual,  we define the true path $f$   by $f(k) = \liminf_s \delta_s(k)$. By Claim~\ref{claim:1}   each $\aaa \prec f$ is only initialized finitely often, because each $\beta$ such that $\beta 1 \prec \aaa$ eventually is stuck with a single   run of a procedure  $\SS^\beta_m$. 

\begin{claim} If $e = \aaal $ and $\aaa 1 \prec f$, then $A_e \neq \Psi_e^Y$. \end{claim}

Some procedure $\SS^\aaa_n$ was called with parameter $w$, and  is eventually stuck at (e) with the final value
$A_e\uhr w$. Hence the definition $\Q= \PP^{\aaa1}$    eventually  stabilizes  at $\aaa$-stages $s$. Since $Y \in \Q$,  this implies $A_e \neq \Psi_e^Y$. 

\begin{claim}   \label{claim:4} 
If  $e = \aaal $ and $\aaa 0 \prec f$, then $ A_e$ obeys $\cc$.
\end{claim} 
Let  $A= A_e$. We define a computable enumeration $(\hat A_p) \sN p$ of $A$ via which $A$ obeys $\cc$. 

Since $\aaa 0 \prec f$, each procedure $\SS^\aaa_n$ returns. In particular, since $\cc$ has the limit condition and by Claims~\ref{claim:1} and~\ref{claim:2}, each value $w= w^n_i$   becomes permanently  $(\aaa, n)$-satisfied. Let $d= \init_s(\aaa)$.   Let $s_0$ be the least  $\aaa0$-stage  such that $s_0 \ge d$, and let

\vsps

   $s_{p+1} = \mu s \ge  s_p+2  \,  [ s \ttext{is $\aaa0$-stage} \lland $
  \bc $\forall n, i \,  ( w = w^n_i <  s_p  \ria \,  w \ttext{is $(\aaa, n)$-satisfied at} s ) ]$. \ec
As in similar constructions such as~\cite{Nies:book},  for $p \in \NN$ we let \bc $\hat A_p = A_{s_{p+2}} \cap [0,p)$. \ec

  Consider  the situtation that $p > 0$ and $x\le p$ is least  such that  $\hat A_p(x) \neq \hat A_{p-1}(x)$. We call this situation  an  \emph{$n$-change} if $n$ is  least such that  $x<  w^n_i < s_p$ for some $i$. (Note that $n\le p+1$ because $w_0^{p+1} = p+1$.) Thus $(x,s_p)$ contains no value of the form $w^{n-1}_j$, whence  $ \cc(x, p) \le  \cc(x,s_p)  \le 4^{-n+1}$.  We are done if we can show there are at most $\tp{n+d}$ many $n$-changes, for in that case  the total cost $\cc  \seq {\hat A_p } $  is bounded by $\sum_n 4^{-n+1} \tp{n+d} =O(2^d)$.

 Recall that $\P^\aaa$ is stable by stage $s_0$.   Note that   $Y\uhr{n+d}$ can only move to the right after the first run of $\SS^\aaa_n$, as observed in the proof of Claim~\ref{claim:1}.  
 
 Consider $n$-changes at stages $p<q$ via parameters $w = w^n_i$ and  $w'= w^n_k$ (where possibly $k<i$). Suppose the last run of $\SS^\aaa_n$ with parameter $w$ that was  started before $s_{p+1}$  has returned at stage $t \le s_{p+2}$, and similarly,  the  last run of $\SS^\aaa_n$ with parameter $w'$ that was  started before $s_{q+1}$  has returned at stage $t'$. Let $z= Y_t \uhr {n+d}$ and  $z'= Y_{t'} \uhr {n+d}$. We   show $z <_L z'$; this implies that there are at most $\tp{n+d}$ many $n$-changes. 
  
  At stage $t$,  by definition of returning at (f) in the run of $\SS^\aaa_n$, we have $\Q = \ES$. Therefore $ \Psi^X_{e,t}  \succ A_{e,t}\uhr w$ for each $X$ on $   \P^\aaa_t$ such that  $ X \succ z $. Now  \bc $\hat  A_p(x)  \neq \hat A_{p-1}(x)$, $x<w$   and $t \le s_{p+1}$,  \ec so  $A_{s_{p+2}} \uhr w \neq A_t \uhr w$, The stage  $s_{p+2}$ is $\aaa0$-expansionary, and $Y_{s_{p+2}}$ is on  $\P^\aaa_t$. Therefore
    \bc $Y_{r-1} \uhr {n+d}\,  <_L Y_{r} \uhr {n+d}$  \ec
  for some stage $r$ such that  $t < r \le s_{p+2}$. Thus,  at stage $r$, the value $w'$ was declared $(\aaa, n)$-unsatisfied.  Hence a new run of $\SS^\aaa_n$ with parameter $w'$ is  started after $r$, which   has returned by stage $s_{q+1} \ge s_{p+2}$. Thus $r< t'$.  So   $z \le_L Y_{r-1} \uhr {n+d}  <_L Y_{r} \uhr {n+d}   \le_L  z'$, whence $z <_L z'$  as required. 
 This concludes   Claim~\ref{claim:4}  and the proof. 
 \end{proof}

\section{A dual cost function construction} \label{ss:dual}
Given a relativizable cost function $\cc$, let $D \rightarrow W^D$ be the c.e.\ operator given by the cost function construction   in  Theorem~\ref{thm:cfconstr}  relative to  the  oracle~$D$. By pseudo-jump inversion there is a c.e.\ set $D$ such that $W^D \oplus  D \equiv_T \Halt$, which implies $D <_T \Halt$.   
Here, we give a direct construction of  a c.e.\ set $D <_T \Halt$ so  that  the total cost of $\Halt$-changes   as measured by $\cc^D$ is finite. More precisely, there is a $D$-computable enumeration of $\Halt $ obeying~$\cc^D$. 

If $\cc$ is sufficiently strong, then 
  the   usual  cost function construction    builds an incomputable c.e.\   set $A$ that is close to being computable. The dual cost function construction then builds a c.e.\ set $D$ that is close to being Turing complete. 
  
 \subsection{Preliminaries on cost functionals}  Firstly we clarify  how to relativize  cost functions, and the notion of obedience to a cost function. Secondly we provide some technical details needed for the main construction. 
\begin{deff}  {\rm (i)  A \emph{cost functional}  is a Turing functional $\cc^Z(x,t)$ such that for each oracle $Z$, $\cc^Z$ either  is partial,  or  is a cost function relative to $Z$. We say that $\cc$ is non-increasing  in main argument    if this holds for each oracle $Z$ such that $\cc^Z $ is total. Similarly, $\cc$ is non-decreasing  in the stage argument    if this holds for each oracle $Z$ such that $\cc^Z $ is total. If both properties hold we say that $\cc$ is monotonic.

\n (ii) Suppose  $A \leT Z'$. Let $\seq{A_s}$ be a  $Z$-computable approximation of $A$. 

\n We write $\seq{A_s}  \models^Z \cc^Z$ if    

\vsps

  $ {\cc^Z}   {\seq{A_s}} = \sum_{x,s}  \cc^Z(x,s) $

\hfill $
 \Cyl{ x< s  \lland   
 \cc^Z(x,s) \DA \lland  x \ttext{ is
least s.t.}   A_{s-1} ( x) \neq  A_{s} (x) } $ 

\vsps

\n  is finite. 
We write $A \models^Z c^Z$ if $\seq{A_s}  \models^Z c^Z$ for some $Z$-computable approximation  $\seq{A_s}$ of $A$. } \end{deff} 

\n For example, $\cost^Z(x,s) = \sum_{x < w \le s} 2^{-K^Z_s(w)}$ is a total  monotonic cost functional.   We have $A\models^Z \cost^Z$ iff $A $ is $K$-trivial relative to $Z$. 

%

\vsp

 We may convert a cost functional $\cc$ into a  total cost functional $\wt \cc$  such that $\wt \cc^Z(x)= \cc^Z(x)$ for each $x$ with   $\fao t \cc^Z(x,t)\DA$, and, for each $Z,x,t$, the computation  $\wt \cc^Z(x,t)$ converges in $t$ steps.     Let 
 
 \bc $\wt \cc^Z(x,s) = \cc^Z(x,t)$ where $t\le s$ is largest such that $\cc^Z(x,t)[s] \DA$. \ec Clearly, if  $\cc$ is monotonic in the main/stage argument then so is $\wt \cc$.

Suppose that $D$ is c.e.\ and we   compute $\cc^D(x,t)$  via    hat computations \cite[p.\ 131]{Soare:87}: the use  of a computation $\cc^D(x,t)[s]\DA$   is no larger  than the least number entering $DÄ$ at stage $s$. Let $N_D $ be the set of non-deficiency stages; that is, $s \in N_D$ iff there is  $x\in D_s- D_{s-1}$ such that $D_s\uhr x = D\uhr x$. Any hat computation existing at a non-deficiency stage is final. We have
\begin{equation} \label{hat cost} \cc^D(x) = \sup_{s \in N_D} \wt \cc^{D_s}(x,s). \end{equation}
For, if $\cc^D(x,t)[s_0] \DA$ with $D$ stable below the use,  then $\cc^D(x,t) \le \wt \cc^{D_s}(x,s)$ for each $s \in  N_D$. Therefore $ \cc^D(x) \le  \sup_{s \in N_D} \wt \cc^{D_s}(x,s)$. For the converse inequality, note that for $s \in N_D$ we have $\wt \cc^{D_s}(x,s) = \cc^D(x,t)$ for some $t \le s$ with $D$ stable below the use.



\subsection{The  dual  existence theorem} 

\begin{thm} Let $\cc$ be a  total cost functional that is nondecreasing in the stage component and satisfies the limit condition for each oracle. Then there is a   Turing incomplete c.e.\ set $D$ such that $\Halt \models^D \cc^D$. \end{thm}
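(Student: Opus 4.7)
The plan is to build $D$ as a c.e.\ set by a finite-injury priority construction together with a $D$-computable approximation $\seq{H_s}$ of $\Halt$, meeting the diagonalization requirements
\bc $N_e\colon \Phi_e^D \neq \Halt$ \ec
for each $e$ (ensuring $D$ is Turing incomplete), while maintaining the global invariant $\cc^D\seq{H_s}<\infty$. Note that since $\Halt$ is $\DII$ it has a trivial computable approximation; the content of the statement is that such an approximation exists that is moreover controlled by $\cc^D$, while $D$ itself is incomplete.

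The central difficulty is that, given only the limit condition for $\cc^D$, the threshold function
\bc $\sigma(n) = \mu y. \, \forall x \ge y \, [\,\ul{\cc^D}(x) \le \tp{-n}\,]$ \ec
is a priori only $D''$-computable, so a naive $D$-computable approximation of $\Halt$ has no obvious bound on its cost (and, in particular, $\sum_x \ul{\cc^D}(x)$ may diverge). First I would build $D$ to encode a $D$-computable approximation $\seq{\sigma^s}$ converging to $\sigma$, via reserved coding positions: whenever a witness $x \ge \sigma^{s-1}(n)$ with $\cc^{D_s}(x,s) > \tp{-n}$ appears at some stage $s$, a coding bit is enumerated into $D$ that forces $\sigma^s(n) > x$. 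By the limit condition applied to the final oracle $D$, only finitely many such updates occur per $n$, so $\sigma$ is $D$-computable in the limit.

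With $\sigma$ effectively $D$-computable, I would define $\seq{H_s}$ by a level-batched enumeration. At each stage $s$ and each level $n$ for which new $\Halt_s$-elements have appeared in the window $[\sigma^s(n),\sigma^s(n{+}1))$, they are enumerated into $H_s$ simultaneously, so that the least index of a change at such a stage is at least $\sigma^s(n)$ and the charge is at most $\tp{-n}$. The bookkeeping is arranged so that level-$n$ batches cumulatively contribute only $O(\tp{-n})$ to the total cost (using that $\sigma^s(n)$ changes only finitely often and that level-$n$ batches can be piggybacked onto lower-level batches whenever both fire at the same stage), yielding the geometric bound $\cc^D\seq{H_s}<\infty$. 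Convergence $H=\Halt$ then follows from the stabilization of $\seq{\sigma^s}$ together with the fact that every element of $\Halt$ is eventually processed at its level.

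The hard part will be coordinating the coding of $\sigma$ into $D$ with the incompleteness requirements $N_e$: each coding enumeration may destroy a preserved computation $\Phi_e^D(n_e)$ needed for diagonalization, and may also perturb $\cc^D$ itself, potentially triggering a cascade of further $\sigma^s$-updates that themselves require further coding into $D$. I would handle this by running the construction on a tree of strategies, where each node guesses the eventual stable values of finitely many $\sigma(n)$'s and runs its $N_e$-strategy relative to that guess, placing coding positions above all current preservation bounds of higher-priority $N_e$-strategies. On the true path the guesses are correct, so the coding activity below any fixed use is finite, each $N_e$ is injured only finitely often and eventually preserves a disagreement between $\Phi_e^D(n_e)$ and $\Halt(n_e)$, and the cascade of $\sigma^s$-updates terminates.
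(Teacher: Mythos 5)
Your route is genuinely different from the paper's, so let me first flag the divergence before naming the gaps. The paper never re-approximates $\Halt$: it keeps the given computable enumeration of $\Halt$ fixed, and instead constructs a second cost functional $\Gamma^D$ via a $D$-c.e.\ set of ``wishes'' $\langle x,\alpha\rangle^u$, arranged so that $\ul{\Gamma^D}(x)=\ul{\cc^D}(x)$ for all $x$ and the given enumeration of $\Halt$ obeys $\Gamma^D$; the relativized easy direction of Theorem~\ref{thm:imply} then transfers obedience from $\Gamma^D$ to $\cc^D$. The incompleteness requirements $N_e$ interact with this only by temporarily \emph{holding} wishes, and that preservation cost is bounded by $\sum_e 3^{-e}$. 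You instead attack the look-ahead directly, by coding a threshold function $\sigma$ into $D$ and batching the $\Halt$-enumeration according to $\sigma$-windows.

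The gaps are in the coding and in the cost analysis for the batched approximation. The circularity you acknowledge is real and unresolved: each coding bit changes the oracle, hence $\cc^D$, hence the true $\sigma$, possibly requiring further coding. Since the cost functional $\cc^Z$ is arbitrary apart from stage-monotonicity and the limit condition, nothing in your setup prevents the update cascade for a fixed level $n$ from running unboundedly, and you give no termination argument. More concretely, before $\sigma^s(n)$ stabilizes your level-$n$ batches are placed in the wrong windows, and a single misplaced batch is charged $\cc^D(x,s)$ for the least $x$ that changed, which can exceed $\tp{-n}$ by an arbitrary amount. To justify ``level-$n$ batches cumulatively contribute only $O(\tp{-n})$'' you would need either a bound on the number of misplaced batches, or to withhold every level-$n$ change until $\sigma(n)$'s true value is known --- but determining that value is $D'$-computable, not $D$-computable, so withholding destroys the $D$-computability of $\seq{H_s}$. (Incidentally, since $\cc$ is nondecreasing in the stage, $\ul{\cc^D}$ is lower $D$-semicomputable and the set $\{y:\forall x\ge y\,[\ul{\cc^D}(x)\le\tp{-n}]\}$ is upward-closed and $\Pi^0_1(D)$, so $\sigma$ is already $D'$-computable rather than $D''$-computable; this does not remove the difficulty.) The paper's wish mechanism avoids all of this because removing a wish is a one-shot $D$-enumeration: when $x$ enters $\Halt$ and no $N_e$ is holding the relevant wish, the construction enumerates the wish's use into $D$, and $\Gamma^D(x,\cdot)$ simply stops seeing that wish once $D$ stabilizes below the use --- no re-approximation of $\Halt$ and no coded threshold function are needed.
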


\begin{proof}  We define a cost functional $\Gamma^Z(x,s)$ that is nondecreasing in the stage. We will have $\ul \Gamma^D(x) = \cc^D(x)$ for each $x$, where $\ul \Gamma^D(x) = \lim_t \Gamma^D(x,t)$, and $\Halt$ with its given  computable enumeration obeys $\Gamma^D$.   Then $\Halt \models^D \cc^D$ by the easy direction `$\LA$' of Theorem~\ref{thm:imply} relativized to $D$.

Towards  $ \Gamma^D(x) \ge \cc^D(x)$, when we see a computation $\wt \cc^{D_s}(x,s) = \aaa$ we attempt to ensure that   $\Gamma^D(x,s) \ge \aaa$.
To do so we enumerate relative to $D$ a set $G$ of  ``wishes''  of the form 
\bc $\rho = \la x, \aaa \ra^u$, \ec
where $x\in \NN$,  $\aaa$ is a nonnegative rational, and $u+1$ is the use. We say that $\rho$ is a {\it wish about  $x$}.  If such a wish  is enumerated at a stage $t$ and $D_t \uhr u$ is stable,  then  the wish is granted, namely,  $\Gamma^D(x,t) \ge \aaa$.  The converse inequality $ \Gamma^D(x) \le \cc^D(x)$ will hold automatically.

To ensure $D <_T \Halt$, we enumerate a set $F$,  and meet the requirements 
\bc $N_e \colon \, F  \neq \Phi_e^D$. \ec
Suppose we have put  a wish  $\rho = \la x, \aaa \ra^u$ into $G^D$. To keep  the total $\Gamma^D$-cost of the given computable enumeration of $\Halt$ down, when $x$ enters $\Halt$ we   want to remove $\rho$ from $G^D$   by putting $u$ into $D$. However, sometimes $D$ is preserved by some $N_e$. This will generate a {\it preservation cost}. $N_e$ starts a run at a stage $s$  via some parameter $v$, and ``hopes'' that $\Halt_s \uhr v$ is stable. If $\Halt \uhr v$ changes after stage  $s$, then this run of $N_e$ is cancelled. On the other hand, if $x\ge v$ and $x $ enters $\Halt$, then the ensuing preservation cost can be afforded. This is so because we choose $v$ such that $\wt c_s^{D_s}(v,s) $ is  small. Since $\wt \cc^D$ has the limit condition, eventually there is a run $N_e(v)$ with such a low-cost $v$ where $\Halt \uhr v$ is stable. Then the diagonalization of $N_e$ will succeed. 

\vsp

\n {\it Construction of c.e.\ sets $F,D$ and a  $D$-c.e.\ set $G$ of wishes.}

\n {\it Stage $s > 0$. } We may suppose that there is a unique $n \in \ES'_s - \ES'_{s-1}$. 

\vsps

\n {\it 1.\ Canceling $N_e$'s.} Cancel all currently active $N_e(v)$ with  $v > n$. 

\n {\it 2. Removing wishes.} For each $\rho= \la x, \aaa \ra^u\in G^D[s-1]$ put in at a stage $t<s$, if $  \Halt_s \uhr {x+1} \neq \Halt_{t} \uhr {x+1}$ and    $\rho$ is not held by any $N_e(v) $, then put $u-1$ into $D_s$, thereby removing $\rho $ from $G^D$. 

\n {\it 3. Adding wishes.} For each  $x< s$ pick a large  $u$ (in particular, $u \not \in  D_s$) and put a wish $\la x, \aaa \ra ^u$ into $G$ where $\aaa = \wt \cc^{D_s}(x,s)$. The set of  queries to the oracle $D$  for this enumeration into $G$ is  contained in $[0,r) \cup \{ u\}$, where $r $ is the  use of $  \wt \cc^{D_s}(x,s)$ (which may be much smaller than $s$).    Then, from now on this wish is kept in $G^D$
unless  (a) $D\uhr r$ changes , or  (b)  $u$ enters $D$. 

\n {\it 4. Activating  $N_e(v)$.}  For each $e < s$  such that  $N_e$ is not  currently active, see if  there is $v$, $e \le v \le n$ such that  
\bi \item[--] $\wt \cc^{D_s}(v,s)  \le 3^{-e}/2$,
\item[--]  $v> w$ for each $w$ such that   $N_i(w)$ is active for some   $i< e$, and 
\item[--]   $\Phi_e^D \uhr{x+1}  =F \uhr {x+1} $ where $x =  \la e, v, |\Halt \cap [0,v)|\ra$, 
 \ei
If so,  choose $v$ least and activate  $N_e(v)$. Put $x $ into $F$. Let  $N_e$  {\it hold} all wishes  for some $y \ge v$   that are   currently in $G^D$. Declare that such a wish is no longer  held by any $N_i(w)$ for $i\neq e$. (We also say that $N_e$ {\it takes over} the wish.)

Go to stage $s'$ where $s'$ is larger than any number mentioned so far.

 \vsp

\n {\bf Claim 1.} {\it  Each requirement $N_e$ is activated only finitely often, and  met. Hence $F \not \leT D$.}

\n  Inductively suppose  that $N_i$ for   $i<e$ is no longer activated after stage $t_0$. Assume for a contradiction that $F= \Phi_e^D$. Since $\cc^D$ satisfies  the limit condition, by (\ref{hat cost}) there is a least $v$ such that $\wt \cc^{D_s}(v,s)\le 3^{-e}/2$ for infinitely many $s> t_0$. Furthermore, $v> w$ for any $w$ such that some $N_i(w)$,   $i< e$,  is active at $t_0$.  Once $N_e(v) $ is activated,  it can only be canceled by a change of  $\Halt \uhr v$. Then there is   a stage $s> t_0$, $\wt \cc^{D_s}(v,s)\le 3^{-e}/2$,   such that  $\Halt \uhr  v$ is stable at $s$ and $\Phi_e^D \uhr{x+1}  =F \uhr {x+1} $ where $x =  \la e, v, |\Halt \cap [0,v)|\ra$.  If some $N_e(v')$ for $v' \le v$ is active after  (1.) of stage $s$  then it remains active, and  $N_e$ is met. Now suppose otherwise.

Since  we do not  activate $N_e(v)$ in (4) of stage $s$,  some  $N_e(w)$ is active for   $w> v$. Say it was activated last at a stage $t< s$ via $x= \la e,w, | \ES'_t \cap [0,w]|$.  Then $x' =  \la e, v, |\Halt_t \cap [0,v)|\ra$ was available to activate $N_e(v)$ as $x' \le x$ and hence  $\Phi_e^D \uhr {x'+1} = F \uhr {x'+1} [t]$. Since $w$ was chosen minimal for $e$ at stage $t$,   we had 
  $\wt \cc^{D_t}(v,t) > 3^{-e}/2$. On the other hand,  $\wt \cc^{D_s}(v,s)\le 3^{-e}/2$, hence $D_t \uhr t \neq D_s\uhr t$.   When $N_e(w)$ became active at $t$  it tried to preserve $D\uhr t$ by holding all wishes  about  some $y \ge w$ that were     in $G^D[t]$. Since $N_e(w)$ did   not succeed, it was cancelled by a change   $\Halt_t \uhr w \neq \Halt_s \uhr w$. Hence $N_e(w)$ is not active at stage $s$, contradiction.   \hfill $\Diamond$

\vsps

We now define $\Gamma^Z(x,t)$ for an oracle $Z$ (we are  interested only in the case that  $Z=D$). Let $s$ be least such that $D_s \uhr t = Z\uhr t$. Output the maximum $\aaa $ such that some   wish  $\la x, \aaa \ra^u  $ for $u \le t$ is in $G^D [s]$.

 \vsp
\n {\bf Claim 2.} {\it  (i) $\Gamma^D(x,t)$ is nondecreasing  in $t$. (ii)   $\fao x \ul \Gamma^D(x) = \ul  \cc^D(x)$.}

\n (i).  Suppose  $t' \ge t$. As above let $s$ be least such that $D_{s} \uhr {t}$ is stable.  Let $s'$ be least such that $D_{s'} \uhr {t'}$ is stable. Then $s' \ge s$, so    a wish as in the definition of $\Gamma^D(x,t)$ above  is also in $G^D[s']$. Hence $\Gamma^D(x,t') \ge \Gamma^D(x,t)$.

\n (ii). Given $x$, to show that $\Gamma^D(x) \ge \cc^D(x)$ pick $t_0$ such that $  \Halt \uhr {x+1}$ is stable at $t_0$.  Let $s \in N_D$ and $s > t_0$. At stage $s$ we put a wish $\la x, \aaa\ra^u$ into $G_D$ where $\aaa= \wt \cc^{D_s}(x,s)$. This wish is not removed later, so $\Gamma^D(x) \ge \aaa$.

For  $\Gamma^D(x) \le \cc^D(x)$, note that for each $s \in N_D$ we have
  $\wt \cc^{D_s}(x,s) \ge \Gamma^{D_s}(x,s)$ by the removal of  a wish  in 3(a) of the construction when the reason the wish was there disappears. 
\hfill $\Diamond$

\vsps

\n {\bf Claim 3.} {\it     The  given computable enumeration of $\Halt $  obeys $\Gamma^D$. }

\n First we show by induction on stages $s$ that {$N_e$ holds in total  at most $3^{-e}$ at the end of stage $s$, namely, 

\begin{equation} \label{heyyy}  3^{-e} \ge \sum_x \max \{ \aaa \colon \, N_e \ttext{holds a wish} \la x, \aaa \ra^u\} \end{equation}

Note that once $N_e(v)$ is activated and holds some wishes, it will not hold any further  wishes later,  unless it is cancelled by a change of $\Halt \uhr v$  (in which case the wishes it holds are removed).

We may assume that $N_e(v)$ is activated at (3.) of  stage $s$. Wishes held at stage $s$ by  some $N_i(w)$ where  $ i <  e$ will not be taken over by $N_e(v)$ because $w<v$.  Now  consider wishes held by a  $N_i(w)$ where  $ i >  e$. By inductive hypothesis  the total of such wishes  is at most $\sum_{i > e} 3^{-i} = 3^{-e}/2 $   at the beginning of stage $s$.   The activation of $N_e(v)$ adds at most another $3^{-e}/2 $ to the sum in (\ref{heyyy}).

To show $\Gamma^D \seq  {\Halt_s} < \infty$, note that any contribution to this quantity due to $n$ entering $\Halt $ at stage $s$ is because a  wish $\la n, \delta \ra^u $ is eventually  held by some $N_e(v) $. The total is at most $\sum_e 3^{-e}$.   }
\end{proof}

The study of non-monotonic cost function is left to the future. For instance, we conjecture that there are cost functions $\cc, \dd$ with the limit condition such that for any $\DII$ sets $A,B$,  
\bc $A \models \cc$ and $B \models \dd$ $\RA$ $A,B$ form a minimal pair. \ec
It is not hard to build cost functions $\cc,\dd$ such that only computable sets obey  both of them. This provides some evidence for the conjecture.

%
%

\end{document}